\documentclass{amsart}
\usepackage{amssymb}
\usepackage{amscd}
\usepackage{amsmath}
\usepackage{enumerate}
\usepackage[dvips]{graphicx}
\newtheorem{theo}{Theorem}
\newtheorem{lema}[theo]{Lemma}
\newtheorem{cor}[theo]{Corollary}
\newtheorem{prop}[theo]{Proposition}

\newtheorem{definition}[theo]{Definition}

\newtheorem{remark}[theo]{Remark}

%Useful shortcuts
\newcommand{\AAA}{{\mathbb{A}}}
\newcommand{\CC}{{\mathbb{C}}}
\newcommand{\NN}{{\mathbb{N}}}
\newcommand{\PP}{{\mathbb{P}}}
\newcommand{\QQ}{{\mathbb{Q}}}

\newcommand{\KK}{{\mathbb{K}}}
\newcommand{\SSS}{{\mathbb{S}}}
\newcommand{\ZZ}{{\mathbb{Z}}}

\newcommand{\calA}{{\mathcal{A}}}
\newcommand{\calB}{{\mathcal{B}}}
\newcommand{\calC}{{\mathcal{C}}}
\newcommand{\calD}{{\mathcal{D}}}
\newcommand{\calE}{{\mathcal{E}}}

\newcommand{\calG}{{\mathcal{G}}}

\newcommand{\calI}{{\mathcal{I}}}
\newcommand{\calJ}{{\mathcal{J}}}

\newcommand{\calO}{{\mathcal{O}}}
\newcommand{\calP}{{\mathcal{P}}}
\newcommand{\calS}{{\mathcal{S}}}

\newcommand{\calX}{{\mathcal{X}}}

\newcommand{\calZ}{{\mathcal{Z}}}
%Atajos para este articulo.
\newcommand{\comp}{{\circ}}

%Encabezamiento
\begin{document}
\title[Nash Problem]{Nash problem for surface singularities is a topological problem}
\author{Javier Fern\'andez de Bobadilla}
\address{ICMAT. CSIC-Complutense-Aut\'onoma-Carlos III}
\email{javier@mat.csic.es}
\thanks{Research partially supported by the ERC Starting Grant project TGASS and by Spanish Contract MTM2007-67908-C02-02. The author thanks to the Faculty de Ciencias Matem\'aticas of the Universidad Complutense de Madrid for excellent working conditions.}
\date{15-1-2008}
\subjclass[2000]{Primary: 14B05, 14J17, 14E15, 32S05, 32S25, 32S45}
\begin{abstract}
We address Nash problem for surface singularities using {\em wedges}. 
We give a refinement of the characterisation~in~\cite{Re} of the image of the Nash map in terms of wedges. Our improvement consists in
a characterisation of the bijectivity of the Nash mapping 
using wedges defined over the base field, which are convergent if the base field is $\CC$, and whose generic arc has transverse lifting 
to the exceptional divisor.
This improves the results of M. Lejeune-Jalabert and A. Reguera~\cite{LR} for the surface case. 
In the way to do this we find a reformulation of Nash problem in terms of branched covers of normal surface singularities.
As a corollary of this reformulation we prove that the image of the Nash mapping is characterised by the combinatorics of a
resolution of the singularity, or, 
what is the same, by the topology of the abstract link of the singularity in the complex analytic case.
Using these results we prove several reductions of the Nash problem,
the most notable being that, if Nash problem is true for singularities having rational homology sphere links, then it is true in
general. 
\end{abstract}

%\commby{editor}

\maketitle
%Articulo

\section{Introduction}

Nash problem~\cite{Na} was formulated in the sixties (but published later) in the attempt to understand the relation between the 
structure of resolution of singularities of an algebraic variety $X$ over a field of characteristic $0$ and the space of arcs (germs of algebroid curves) in the variety.
He proved that the space of arcs centred at the singular locus (endowed with a infinite-dimensional algebraic variety structure) 
has finitely many irreducible components, and proposed to study the relation of these components with the 
essential irreducible components of the exceptional set a resolution of singularities. An irreducible component $E$ of the exceptional divisor of a
resolution of singularities 
\[\pi:\tilde{X}\to X\]
is called essential,
if given any other resolution 
\[\pi':\tilde{X}'\to X\]
the birational transform of $E$ to $\tilde{X}'$ is an irreducible component of the exceptional divisor. Nash defined a mapping from the set of irreducible components of the
space of arcs centred at the singular locus to the set of essential components of a resolution as follows: he assigns to each component $Z$ 
of the space of arcs centred at the singular locus the unique component of the exceptional divisor
which meets the lifting of a generic arc of $Z$ to the resolution. Nash established the injectivity of this mapping and asked whether it is bijective. He viewed as a plausible fact that Nash mapping is bijective in the surface case, and
also proposed to study the higher dimensional case.

Nash gave an affirmative answer to his problem in the case of $A_k$-singularities. Since then, there has been much progress showing an affirmative answer to the problem
for many classes of singularities: toric singularities of arbitrary dimension, quasi-ordinary singularities, certain infinite families of non-normal 
threefolds, minimal surface singularities, sandwiched surface singularities, quotient surface singularities, and other classes of surface singularities
 defined in terms of the combinatorics of the minimal resolution 
(see \cite{Go},\cite{IK},\cite{I1},\cite{I2},\cite{LR1},\cite{Mo},\cite{Pe},\cite{Pet},\cite{Pl},\cite{PlSp}, \cite{PP1},\cite{PP2},\cite{Re1},\cite{Re2}). 
However, Ishii and Kollar showed in~\cite{IK} a 4-dimensional example with non-bijective Nash mapping. Now the general problem
has turned into characterising the class of singularities with bijective Nash mapping. Besides Nash problem, the study of arc spaces is interesting because it lays the
foundations for motivic integration and because the study of its geometric properties reveals properties of the underlying varieties (see papers of Denef, Loeser, de Fernex, Ein, Ishii, Lazarsfeld, Mustata, Yasuda and others).

Nash problem seems different in nature in the surface case than in the higher dimensional case,
since birational geometry in dimension $2$ is much easier than in higher dimension.
For example the essential components are the irreducible components of the exceptional divisor of a minimal resolution of singularities.
Although Nash problem in known for many classes of surfaces it is not yet known in general for the surface case. 
Even for the case
of the rational double points $E_6$, $E_7$ and $E_8$ the proof has been obtained only very recently: see~\cite{PlSp} for the $E_6$
case and~\cite{Pe} for the quotient surface singularities, which includes the rational double points and uses essentially
the developments of this paper. 
From now on we shall concentrate in the surface case, we let $(X,O)$ be a normal surface singularity defined over a 
field of characteristic $0$, and $\calX_\infty$ denotes the space of arcs through the singular point.

Let us explain the approach to Nash problem based on wedges, due to M. Lejeune-Jalabert~\cite{Le}.
Let $E_u$ be an essential component of a surface singularity $(X,O)$. 
Denote by $N_{E_u}$ the set of arcs whose lifting meets $E_u$. The space of arcs centred at the singular point splits as the union of the $N_{E_u}$'s.
It is known (Remark 2.3 of~\cite{Re}) that $E_v$ is {\em not} in the image of the Nash map if
and only if $N_{E_v}$ is in the Zariski closure of $N_{E_u}$ for a different essential component $E_u$. If Curve Selection Lemma were true in $\calX_\infty$ then, for any arc 
\[\gamma:Spec(\KK[[t]])\to (X,O)\]
in $N_{E_v}$ there should exists a curve in $\calX_\infty$ with special point $\gamma$ and generic point an arc on $N_{E_u}$. To give a curve in $\calX_\infty$ amounts
to give a morphism
\[\alpha:Spec(\KK[[t,s]])\to (X,O)\]
mapping $V(t)$ to $O$ (we see it as a ``family of arcs parametrised by s'').
Such a morphism is called a {\em wedge}. The lifting to $\tilde{X}$ of a generic arc in $N_{E_v}$ is transverse to $E_v$, and if a wedge $\alpha$ 
has special arc equal to $\gamma$ and generic arc in $N_{E_u}$ it is clear that the rational lifting 
\[\pi^{-1}\comp\alpha:\KK[[t,s]])\to\tilde{X}\]
has an indetermination point at the origin, and hence there is no morphism lifting $\alpha$ to $\tilde{X}$. In~\cite{Le}, M. Lejeune-Jalabert proposes to attack Nash
problem by study
ing the problem of lifting wedges whose special arc is a transverse arc through an essential component of $(X,O)$.

Since Curve Selection Lemma in not known to hold in the space of arcs A. Reguera~\cite{Re} introduced $K$-wedges, which are wedges 
\[\alpha:Spec(K[[t,s]])\to (X,O)\]
defined over a field extension $K$ of
$\KK$ and proved the following characterisation: an essential component $E_v$ is in the image of the Nash map if and only 
if any wedge whose special arc is {\em the generic point} of $N_{E_v}$ and whose generic arc is centered at the singular point $O$ 
of $X$, admits a lifting to the resolution.
However the field of definition $K$ of the involved wedges has infinite transcendence degree over $K$ and, hence,
it is not easy to work with them.
Building on this result, A. Reguera~\cite{Re} and M. Lejeune-Jalabert~(Proposition~2.9,~\cite{LR})
proved a sufficient condition for a
divisor $E_v$ to be in the image of the Nash map based on wedges defined over the base field: it is enough to check that any $\KK$-wedge whose special arc is transverse
to $E_i$ arc through a very dense collection of closed points of $E_i$ lifts to $\tilde{X}$ (a very dense set is a set which intersects any countable intersection of 
dense open subsets). The results of~\cite{Re}~and~\cite{LR} hold in any dimension.

We say that a component $E_u$ of the exceptional divisor is adjacent to $E_v$ if $N_{E_v}$ is contained in the Zariski closure of $N_{E_u}$.
The first main result of this article is a characterisation of all the possible adjacencies between essential components of the exceptional divisor of a resolution 
in terms of wedges defined over the base field. We are even able to show that the generic arc of the wedge satisfies certain genericity conditions. 
For example we can show that the generic arc is transverse to $E_u$.

We denote by $\Delta_{E_u}$ the set of arcs in $N_{E_u}$ whose lifting is not transverse to $E_u$ through a non-singular point of $E$.
We prove (see Theorem~\ref{Nashwedge1} for a more precise version):

\textbf{Theorem A.}
{\em Let $(X,O)$ be a normal surface singularity defined over an uncountable algebraically closed field $\KK$ of characteristic $0$.
Let $E_u$, $E_v$ be different essential irreducible components of the exceptional divisor $E$ of a resolution. 
We consider $E$ with its reduced structure. Let $\calZ$ be a proper Zariski closed subset of $N_{E_u}$. Equivalent are:
\begin{enumerate}
\item the component $E_u$ is adjacent to $E_v$.
\item There exists a $\KK$-wedge whose special arc has transverse lifting to $E_v$ at a non-singular point of $E$ and with generic arc belonging to $N_{E_u}$.
\item There exists a $\KK$-wedge whose special arc has transverse lifting to $E_v$ at a non-singular point of $E$ and with generic arc belonging to $N_{E_u}\setminus\calZ$.
\end{enumerate}
If the base field is $\CC$ the following conditions are also equivalent:
\begin{enumerate}[(a)]
\item Given {\em any} convergent arc $\gamma$ whose lifting is transverse to $E_v$ at a non-singular point of $E$ there exists a convergent $\CC$-wedge with special arc $\gamma$ and generic arc 
belonging to $N_{E_u}$.
\item Given {\em any} convergent arc $\gamma$ whose lifting is transverse to $E_v$ at a non-singular point of $E$ there exists a convergent $\CC$-wedge with special arc $\gamma$ and generic arc 
having lifting to the resolution transverse to $E_u$ through a non-singular point of $E$.
\end {enumerate}}

An immediate Corollary characterises the image of the Nash maps in terms of $\KK$-wedges:\\

\textbf{Corollary B.}
{\em Let $(X,O)$ be a normal surface singularity defined over an uncountable algebraically closed field $\KK$ of characteristic $0$.
Let $E_v$ be an essential irreducible component of the exceptional divisor. Equivalent are:
\begin{enumerate}
\item The component $E_v$ is in the image of the Nash map.
\item It does not exist a different component $E_u$ and a $\KK$-wedge whose special arc has transverse lifting to $E_v$ at a non-singular point of $E$ 
and with generic arc belonging to $N_{E_u}$.
\end{enumerate}
If the base field is $\CC$ the following condition is also equivalent:
\begin{enumerate}[(a)]
\item There exists a convergent arc $\gamma$ whose lifting is transverse to $E_v$ at a non-singular point of $E$ 
such that there is no convergent $\CC$-wedge whose special arc is $\gamma$ and whose generic arc has a lifting to the resolution transverse to $E_u$ 
through a non-singular point of $E$, for a different component $E_u$ of the exceptional divisor.
\end {enumerate}}

Our result is based on the Curve Selection Lemma of A. Reguera (see Theorem~4.1 and Corollary~4.8 of~\cite{Re}), and improves the 
statement corresponding to surfaces of Corollary 2.5~of~\cite{LR} in the following sense:
\begin{itemize}
\item in order to prove that $E_v$ is not in the image of the Nash map it is sufficient to exhibit a {\em single}
wedge defined over the base field realising an adjacency.
\item If $\KK=\CC$, in 
order to prove that $E_v$ is in the image of the Nash it is sufficient to find a {\em single} convergent arc which can not be the
special arc of a wedge realising an adjacency. This has turned out to be
essential for the proof of the bijectivity of the Nash mapping for quotient surface singularities~\cite{Pe}.
\item The convergence and the condition that the wedge avoids a proper closed subset $\calZ$ of $N_{E_u}$ is also very useful
in practice since it
allows to work geometrically with wedges whose generic arc has also a tranverse lifting. See~\cite{Pe} for an application of these
ideas. 
\end{itemize}

Our condition on the wedges is more precise that the liftability, since, when a component $E_v$ is
not in the image of the Nash map, we want to keep track of the responsible adjacencies.
However, we prove also an improvement of the result of~\cite{LR} in terms of the original condition of lifting wedges of~\cite{Le}:\\

\textbf{Theorem C.}
{\em Let $(X,O)$ be a normal surface singularity defined over $\CC$. Let $E_v$ be any essential irreducible component of the exceptional divisor of a resolution of
singularities.
If there exists a convergent arc $\gamma$ whose lifting is transverse to $E_v$ such that
any $\CC$-wedge having $\gamma$ as special arc lifts to $\tilde{X}$, then the component $E_v$ is in the image of the Nash map.}\\

The ideas of the proofs and the plan of the paper are as follows: 

If the Zariski closure of $N_{E_u}$ contains $N_{E_v}$ we can use
Corollary~4.8~of~\cite{Re} to obtain a $K$-wedge (with $K$ an infinite transcendence degree extension of $\KK$)
whose special arc is {\em the generic point} of $N_{E_v}$ and whose
generic arc lifts to $E_u$. After this we can follow a specialisation procedure similar to the one in~\cite{LR} 
to obtain a $\KK$-wedge
whose special arc has transverse lifting to $E_v$ and with generic arc belonging to $N_{E_u}$. What we will actually 
do is to produce a slight improvement of Corollary~4.8~of~\cite{Re} 
which, after the above mentioned specialisation procedure gives 
$\KK$-wedges with the properties required in Theorem A, (2) and (3). The improvement of Corollary~4.8~of~\cite{Re} is needed because
applying it directly we do not obtain wedges avoiding the proper closed subset $\calZ$ of $N_{E_u}$. This is done 
in Section~\ref{exis}.

Most of the technique introduced in this paper is devoted to the proof of the converse statement. Let
$\gamma$ be an arc whose lifting to $\tilde{X}$ is transverse to $E_v$. A 
$\KK$-wedge whose special arc is $\gamma$ hand with generic arc belonging to $N_{E_u}$ (with $E_u$ another component of the
exceptional divisor) will be called a wedge realising an adjacency from $E_u$ to $\gamma$.

In Section~\ref{appwdg} we use an Approximation Theorem to replace a $\KK$-wedge realising an adjacency 
by an algebraic one with the same property with respect to another transverse arc $\gamma'$, and retaining 
genericity conditions. 

In Section~\ref{awfm}, using Stein Factorisation, we ``compactify'' the wedge and factorise it through a finite covering of 
normal surface singularities ``realising an adjacency from $E_u$ to $\gamma'$'' (see~Definitions~\ref{branchadj}~and~\ref{branchadj2}). 
We prove that there exists a wedge 
realising an adjacency from $E_u$ to $\gamma'$ if and only if there exists a finite covering realising an adjacency from $E_u$ to $\gamma'$.

In Section~\ref{mw} we work in the complex analytic category. We use a topological argument and a suitable change of complex structures we prove that given two convergent 
arcs $\gamma$ and $\gamma'$ on a complex analytic normal surface singularities there exists a finite covering
realising an adjacency from $E_u$ to $\gamma$ if and only
if there exists a finite covering realising an adjacency from $E_u$ to $\gamma'$. This technique allows to move wedges in a very flexible way, and is the key to the
following surprising result (see Theorem~\ref{combinatorio}, and Corollary~\ref{topo}):\\

\textbf{Theorem D.}
{\em The set of adjacencies between exceptional divisors of a normal surface singularity is a combinatorial property
of the singularity: it only depends on the dual weighted graph of the minimal good resolution. In the complex analytic case this means that the set of adjacencies only depends
on the topological type of the singularity, and not on the complex structure}.\\

In Section~\ref{principal} we use Lefschetz Principle to reduce the existence of a finite covering realising an adjacency from $E_u$ to a $\gamma'$ to the analogue statement in the complex analytic case. 
This allows to finish the proof the main results of the paper. 

Finally, in Section~\ref{aplicaciones} we use the fact that Nash problem is topological to study and compare the adjacency structure of different singularities:
we prove reductions of Nash problem for singularities with symmetries in the dual weighted graph of the minimal good resolution
(see~Proposition~\ref{simmetries} and Corollary~\ref{simmetries2}). We prove results comparing
the adjacency structure of different singularities (see~Corollary~\ref{comparacion1}). 
We also reduce the Nash problem in the following
sense: we introduce {\em extremal graphs}, which is a subclass of the class of dual graphs with only rational vertices and no loops (see Definition~\ref{extremal}), 
and {\em extremal rational homology spheres}, which are the plumbing $3$-manifolds associated with extremal graphs.\\

\textbf{Corollary E.}
{\em If the Nash mapping is bijective for normal surface singularities whose minimal good resolution graph is extremal
then it is bijective in general. Equivalently, if the Nash mapping is bijective for all complex analytic normal
surface singularities having extremal $\QQ$-homology sphere links then it is bijective for any normal surface singularity.}\\

The last Corollary improves Proposition~4.2~of~\cite{LR}, which reduces Nash problem for surfaces to the class of surfaces having only rational vertices in its resolution,
and makes essential use of Theorem D.

The author thanks to M. Lejeune-Jalabert for a conversation in which he learnt on the possible relation between Nash and wedge
problems. He also thanks useful comments from P. Popescu-Pampu, H. Hauser, M. Pe Pereira, G. Rond and the referee.

\section{Terminology}

Let $\KK$ be an algebraically closed field of characteristic $0$. Let $X$ be a normal algebraic surface over $\KK$ with a singularity at a point $O$.
Since we will be interested in the germ $(X,O)$ we may assume $X$ to be embedded in $\KK^N$, being $O$ the origin.
Denote by $I$ the defining ideal of $X$ and let $(F_1,....,F_r)$ be generators of $I$. If the coordinates of $\KK^N$ are $x_1,...,x_N$ then each 
$F_i$ is a polynomial in these variables.

Unless we state the contrary we denote by $\pi:\tilde X\to X$ an Hironaka resolution of the singularity of $X$ at $O$ (obtained by a blowing up at an ideal whose zero set is the 
singular locus~\cite{Hi1}). Let $E=\bigcup_{u=1}^sE_u$ be the decomposition 
in irreducible components of $E$.

Given a field $K$ containing the base field $\KK$, a $K$-arc in $(X,O)$ is a scheme morphism
\[\gamma:Spec(K[[t]])\to (X,O).\]
It is determined by the formal power series $x_i\comp\gamma(t)\in K[[t]]$ for $1\leq i\leq n$ (the coordinate series).
A $\KK$-arc is algebraic if its coordinate
series are algebraic power series: there exists a polynomial $P_i\in\KK[t,x]$ such that $P_i(t,x_i\comp\gamma(t))=0$ for any $i\leq N$. In the case that $\KK=K=\CC$, a $\CC$-arc is convergent if its coordinate series are convergent.

Denote by $\calX_\infty$ be arc space of $(X,O)$, with its infinite-dimensional scheme structure. There is a natural
bijection between the set of $K$-arcs and the $K$-valued points of $\calX_\infty$. 
See~\cite{IK} for a more detailed exposition on arc spaces.

Denote by $N_{E_u}$ the subspace of formal arcs whose lifting to $\tilde{X}$ sends the special point into $E_u$; these arcs are called {\em arcs through $E_u$}. Let $\overline{N}_{E_u}$ be its Zariski closure in $\calX_\infty$ and $\dot{N}_{E_u}$ the subset of arcs in $N_{E_u}$ whose lifting meets $E_u$ transversely at a non singular point of $E$; these arcs are called {\em transverse arcs through $E_u$}.

A $K$-wedge in $(X,O)$ is a scheme morphism
\[\alpha:Spec(K[[t,s]])\to (X,O)\]
which sends the point $V(t)$ to $O$. Its coordinate series are formal power series in the variables $t,s$. A $\KK$-wedge is said to be algebraic if 
all the coordinate series are algebraic (that is, satisfy a polynomial equation $P_i(t,s,x_i\comp\alpha(t,s))=0$ for $P_i\in\KK[t,s,x]$). 
In the case that $\KK=K=\CC$, a $\CC$-wedge is convergent if its coordinate series are convergent.

Given a $K$-wedge, we call the $K$-arc $\alpha_s(t):=\alpha(t,0)$ the {\em special} $K$-arc associated to it, and the $K((s))$-arc $\alpha_g$
(same coordinate series, but viewed in $K((s))[[t]]$) the {\em generic} $K((s))$ arc associated to it.

As the space of arcs, the space of wedges $\calX^{Sing}_{\infty,\infty}$ as above has a structure of infinite dimensional algebraic variety.
There is a natural bijection between the set of $K$-wedges and the $K$-valued points of $\calX^{Sing}_{\infty,\infty}$ (see~~\cite{LR}).

Since in this article we are interested in Nash Problem for a normal surface (and hence isolated) singularity, we only need to use wedges such that all both the generic 
and the special arc are centred at the singular point. In more general contexts (see~\cite{LR}) wedges with the general arc not centred at the singular locus are
considered.

\begin{definition}
A $K$-wedge realises an adjacency from $E_u$ to $E_v$ if its generic arc belongs to $N_{E_u}$ and its special arc
belongs to $\dot{N}_{E_v}$ (it is transverse to $E_v$).
\end{definition}

We will be able to produce wedges for which the generic arc satisfies any finite amount of genericity conditions. This 
can be formulated as follows:

\begin{definition}
Let $\calZ$ be a proper closed subset of $\overline{N}_{E_u}$. A $K$-wedge realises an adjacency from $E_u$ to $E_v$
avoiding $\calZ$ if it realises an adjacency from $E_u$ to $E_v$ and its generic arc does not belong to $\calZ$.
\end{definition}

A very natural class of wedges are those realising an adjacency and
for which the generic arc is transverse to $E_u$. For this we define the
proper closed subset $\Delta_{E_u}\subset\overline{N}_{E_u}$ to be the Zariski closure of the complement of $N_{E_u}\setminus\dot{N}_{E_u}$. Any wedge realising an adjacency from $E_u$ to $E_v$ and avoiding $\Delta_{E_u}$ has the
desired properties.

\begin{remark}
\label{lifting1}
A $\KK$-wedge $\alpha$ realising an adjacency from $E_u$ to $E_v$ is a wedge with special arc in $\dot{N}_{E_v}$ which does not lift to $\tilde{X}$ (that is, the rational
map $\pi^{-1}\comp\alpha$ is not defined at the origin of $Spec(\KK[[t,s]])$). This makes the link with the original wedge problem of~\cite{Le}.
\end{remark}

\section{Existence of formal wedges over the base field}
\label{exis}

The aim of this section is to prove that $(1)$ implies $(3)$ in Theorem A. We follow closely the method of~\cite{Re}~and~\cite{LR}, which consists in
two steps: finding a wedge defined over a field extension $\KK\subset K$, and perform an specialisation procedure to deduce the existence
of wedges over the base field. In~\cite{Re}~and~\cite{LR} wedges that do not lift to the resolution are produced. Our aim is to produce wedges realising
adjacencies avoiding a closed subset $\calZ$ in $\overline{N}_{E_u}$. Both conditions are related, but are not the same: a wedge realising an adjacency
does not lift to the resolution, but a wedge which does not lift to the resolution does not necessarily realises an adjacency. The condition of avoiding
a closed subset $\calZ$ is not considered in~\cite{Re}~and~\cite{LR}.

\subsection{An improved Curve Selection Lemma}

Let $E_v$ be an essential divisor. Suppose that there is an adjacency from $E_u$ to $E_v$. Let $z$ be the generic point corresponding
to the irreducible closed
subset $N_{E_v}\subset\calX_\infty$, and $k_z$ its residue field. Corollary~4.8~of~\cite{Re} is a Curve Selection Lemma
in arc spaces which implies (see the proof of Theorem~5.1~of~\cite{Re}) 
that there exists a finite extension $K$ of $k_z$ and a $K$-wedge whose special arc is the generic point of $N_{E_v}$ and whose generic arc belongs to $N_{E_u}$. 
Here we improve the Curve Selection Lemma so we can show that the generic arc of the wedge also satisfy certain genericity conditions.
The proof we give here is a modification
of the proof of Corollary~4.8~of~\cite{Re}, and is a Corollary of Theorem~4.1~of~\cite{Re} as well. However
we want to make several points more explicit for later use and we also want to prove the
existence of wedges avoiding a proper closed subset $\calZ$ of $N_{E_u}$.
We shall follow closely~\cite{Re}. 

\begin{definition}[\cite{Re}]
An irreducible Zariski-closed subset $N$ of $\calX_\infty$ is generically stable if there exists an affine open subscheme $U$ of $\calX_\infty$ such that $N\cap U$ is 
non-empty and its defining ideal is the radical of a finitely generated ideal.
\end{definition}

Let $N$ be any irreducible closed subset of $\calX_\infty$, denote by $z$ its scheme-theoretic
generic point and $k_z$ its residue field. Notice that $\calX_\infty$ is an affine scheme. Let $R$ be its coordinate ring. 
There is a prime ideal $I\subset R$ defining the set $N$. Then $k_z$ is the quotient field of $R/I$ and the point $z$ is 
the $k_z$-valued point
\[Spec(k_z)\to \calX_\infty\] 
associated to the natural homomorphism given by the composition
\[R\to R/I\hookrightarrow k_z.\]

Recall that the arc space $\calX_\infty$ is the scheme representing the contravariant functor which assigns to any affine scheme $Spec(A)$ the set of 
$A$-arcs
\[\gamma:Spec(A[[t]])\to X\]
which send the closed subset $V(t)$ to the origin $O$ of $X$. By the above discussion the generic point $z$ of $N$ is identified with a 
$k_z$-arc in $\calX_\infty$. Similarly giving a $K$-wedge in $X$ is the same than giving a morphism
\[\alpha:Spec(K[[s]])\to\calX_\infty.\]

Here is our improved Curve Selection Lemma, whose proof is an adaptation of the proof of Corollary~4.8~of~\cite{Re}:

\begin{lema}
\label{curveselectionlemma}
Let $N$ and $N'$ be two irreducible closed subsets of $\calX_\infty$ such that $N$ is generically stable,  contained in $N'$ and different to it. 
Let $\calZ$ be any other closed subset of $\calX_\infty$ such that $N'$ is not contained in it. Let $z$ be the generic point of $N$ and 
$k_z$ its residue field. There exists a finite field extension $k_z\subset K$ and $K$ wedge whose special arc is the generic point $z$ and whose 
generic arc belongs to $N'\setminus\calZ$. 
\end{lema}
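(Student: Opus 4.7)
The plan is to adapt the proof of Corollary 4.8 of \cite{Re}, which deduces a Curve Selection Lemma in $\calX_\infty$ from Theorem 4.1 of \cite{Re} (an existence theorem for DVRs dominating a Noetherian local ring with prescribed residue and fraction fields). The new ingredient compared with \cite{Re} is to carry the extra closed subset $\calZ$ through the construction and verify that, with no new hypothesis beyond $N'\not\subset\calZ$, the generic arc of the wedge can be forced to lie outside $\calZ$.

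First, I would set up the Noetherian framework. Since $N$ is generically stable, the local ring $R:=\calO_{\calX_\infty,z}$ is Noetherian, with maximal ideal $\mathfrak{m}$ corresponding to $N$ and residue field $k_z$. The irreducible closed subset $N'\supsetneq N$ corresponds to a prime $\mathfrak{p}\subsetneq \mathfrak{m}$, and the closed subset $\calZ$ pulls back to an ideal $\mathfrak{a}\subset R$ (the contraction to $R$ of its defining ideal on an affine chart containing $z$). The hypothesis $N'\not\subset\calZ$ says exactly that the generic point of $N'$ is not contained in $\calZ$, which translates into the crucial ideal-theoretic statement $\mathfrak{a}\not\subset\mathfrak{p}$.

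Next I would apply Theorem~4.1 of \cite{Re} to $R$ together with the prime $\mathfrak{p}$ (equivalently, to the Noetherian local domain $R/\mathfrak{p}$), obtaining a DVR $A$ and a local homomorphism $R\to A$ whose kernel is exactly $\mathfrak{p}$, whose residue field $K$ is a finite extension of $k_z$, and whose fraction field is a finite extension of the residue field $k(\mathfrak{p})$ of $\mathfrak{p}$. By Cohen's structure theorem, $\hat{A}\cong K[[s]]$, so the composition
\[
R\longrightarrow R/\mathfrak{p}\hookrightarrow A\hookrightarrow K[[s]]
\]
determines a morphism $\alpha\colon Spec(K[[s]])\to Spec(R)\to\calX_\infty$, that is, a $K$-wedge on $(X,O)$. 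By construction its special arc is the image of the closed point, which is $z$, and its generic arc is the image of the generic point, whose defining ring map $R\to K((s))$ factors through $R/\mathfrak{p}$; hence the generic arc lies in $V(\mathfrak{p})=N'$. Finally, any $a\in\mathfrak{a}\setminus\mathfrak{p}$ maps to a unit of $Frac(R/\mathfrak{p})$ and therefore to a nonzero element of $K((s))$, so the kernel of $R\to K((s))$ does not contain $\mathfrak{a}$; the generic arc is thus not in $\calZ$.

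The main technical step, and what I expect to be the chief obstacle, is verifying that Theorem~4.1 of \cite{Re} genuinely produces a DVR with the prescribed control on residue and fraction fields in the precise form needed here; this amounts to recalling Reguera's construction (which uses the Noetherianity of $R$ guaranteed by generic stability) and checking that the finite field extensions produced there can be taken to give the $K[[s]]$ completion. Once this is settled, the incorporation of $\calZ$ is essentially cost-free: it is absorbed entirely into the single non-containment $\mathfrak{a}\not\subset\mathfrak{p}$, which is immediate from the hypothesis $N'\not\subset\calZ$.
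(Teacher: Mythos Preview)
Your overall strategy is right, and your key observation---that once one has an injective local map from $\calO_{N',z}$ into a DVR with finite residue extension, avoidance of $\calZ$ is automatic from $\mathfrak{a}\not\subset\mathfrak{p}$---is exactly the point. But there is a genuine technical gap in your setup: generic stability of $N$ does \emph{not} give that $R=\calO_{\calX_\infty,z}$ (or $R/\mathfrak{p}=\calO_{N',z}$) is Noetherian. What Reguera proves (Corollary~4.6(i) of \cite{Re}, which the paper invokes) is only that the \emph{completion} $\hat{\calO}_{N',z}$ is Noetherian. So you cannot apply Theorem~4.1 of \cite{Re} directly to $R/\mathfrak{p}$ as a Noetherian local domain; the hypothesis is simply not available.

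The paper's proof is precisely the workaround for this. It passes to the completion $\hat{\calO}_{N',z}\cong k_z[[X_1,\dots,X_r]]/\calI$, which is Noetherian of dimension $d\geq 1$, and then cuts down by hand: using prime avoidance it chooses $b_1,\dots,b_{d-1}$ so that the quotient is a one-dimensional algebroid curve over $k_z$ and, simultaneously, so that the extended ideal $\hat{J}$ of $\calZ\cap N'$ becomes at most zero-dimensional in the quotient. A geometrically irreducible component of this curve is defined over a finite extension $K$ of $k_z$, and its normalisation gives the map $Spec(K[[s]])\to N'$. Your clean injectivity argument for avoiding $\calZ$ is replaced here by the explicit prime-avoidance bookkeeping ensuring $V(\hat{J})$ does not contain the curve; this is the price of working in the completion, where one no longer has a domain and the map from $\calO_{N',z}$ need not be injective. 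If you rewrite your argument inside $\hat{\calO}_{N',z}$ and track $\hat{J}$ through the construction, you will essentially reproduce the paper's proof.
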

\begin{proof}
By the discussion preceding the Lemma finding such a wedge is equivalent to finding finite extension $k_z\subset K$ and a morphism
\begin{equation}
\label{wedgegengen}
\alpha:Spec(K[[s]])\to N'
\end{equation}
such that the image of the generic point does not fall in $\calZ$ and, if $\calX_\infty=Spec(R)$ and if $I$ is the ideal defining $N$ in
$\calX_\infty$, the restriction of $\alpha$ to the closed point of $Spec(K[[s]])$ is associated to the composition of the natural ring 
homomorphisms
\begin{equation}
R\to R/I\hookrightarrow k_z\hookrightarrow K.
\end{equation}

By Corollary~4.6~(i)~of~\cite{Re}, since $N$ is generically stable the completion $\hat{\calO}_{N',z}$ of the 
local ring $\calO_{N',z}$ at the maximal ideal is a Noetherian ring of dimension at least $1$ (the inclusion $N\subset N'$ is strict).
The ring $\hat{\calO}_{N',z}$ is equicharacteristic since it contains the base field $\KK$. Since it is complete, by Theorem 28.3~of~\cite{Ma}
it has a coefficient ring $k_z$ and therefore is a quotient $k_z[[X_1,...,X_r]]/\calI$, with $r$ the minimal number of generators of the maximal ideal,
and $\calI$ an ideal in $k_z[[X_1,...,X_r]]$. 

Let $J$ the ideal of $\calZ\cap N'$ in the coordinate ring of $\calO_{N',z}$. Denote by $\hat{J}$ its completion to $\hat{\calO}_{N',z}$.
Let $d$ be the dimension of $\hat{\calO}_{N',z}$. If $d=1$ then $\sqrt{\hat{J}}$ equals the maximal ideal of 
$\hat{\calO}_{N',z}$ because otherwise $\calZ$ contains $N'$.
If $d>1$ and $\sqrt{\hat{J}}$ is different to the maximal ideal then, by Prime Avoidance (see Lemma~3.3~of~\cite{Ei}), there exists an element $b_1$ in $\hat{\calO}_{N',z}$ and not belonging to any minimal prime containing 
$\hat{J}$, such that the dimension
of $\calO_{N',z}/(b_1)$ equals $d-1$. Define $\hat{J}_1=\hat{J}+(b_1)$. By construction no irreducible component of
$Spec(\calO_{N',z}/(b_1))$ is contained in $V(\hat{J}_1)$ and hence the dimension of $V(\hat{J}_1)$ is at most $d-2$. We proceed inductively and construct
$b_1,...,b_{d-1}$ such that the dimension of $Spec(\calO_{N',z}/(b_1,...,b_{d-1}))$ equals $1$ and the dimension of $V(\hat{J}_{d-1})$ is at most $0$, where 
$\hat{J}_{d-1}$ equals $\hat{J}+(b_1,...b_{d-1})$.

Since $\calO_{N',z}/(b_1,...,b_{d-1})$ is the quotient $k_z[[X_1,...,X_r]]/\calI+(b_1,...,b_{d-1})$ we have that $\calC=Spec(\calO_{N',z}/(b_1,...,b_{d-1}))$ is an
algebroid curve over $k_z$, and since $V(\hat{J}_{d-1})$ is at most $0$ dimensional it contains at most the closed point of the curve. Let $\overline{k_z}$ be the
algebraic closure of $k_z$. In the ring $\overline{k_z}[[X_1,...,X_r]]$ the ideal $\sqrt{\calI+(b_1,...,b_{d-1})}$ can be expressed as the intersection of finitely
many prime ideals $\calP_1,...,\calP_m$, each of them corresponding to a geometrically irreducible component $\calC_i$ of the curve $\calC$. By Noetherianity each of
curves $\calC_i$ is defined over a finite extension $K$ of $k_z$.

The desired morphism $\alpha$ may be taken as the composition of the normalisation mapping
\[\alpha:Spec(K[[s]])\to\calC_1\]
with the natural morphism
\[\calC_1\to N'.\]
\end{proof}

\subsection{Locally closed subsets of wedges realising adjacencies}
\label{spewedges}
Having a $K$-wedge realising an adjecency from $E_u$ to $E_v$ avoiding $\calZ$ our aim is to find a wedge with the same properties, 
but defined over the base field. Here we follow~\cite{LR}. The first step is to prove the following analog of Proposition~2.5~in~\cite{LR}. The way of
proving the proposition prepares the way to an approximation results for wedges needed in the nest section. 

\begin{prop}
\label{locallyclosed}
Let $E_u$ and $E_v$ be two essential divisors of a resolution of $X$ and $\calZ$ a proper closed subset of $\overline{N}_{E_u}$. If there is a $K$-wedge $\alpha$
realising an adjacency from $E_u$ to $E_v$ avoiding $\calZ$, and such that the image of the special point of the 
special arc $\alpha_s$ is the generic point of $E_u$, then there is a locally closed subset $\Lambda$ of
the space of wedges $\calX_{\infty,\infty}^{Sing}$ such that the $K$-point associated to $\alpha$ belongs to it
and, for any field extension $\KK\subset L$, any $L$-point of $\Lambda$ corresponds to a $L$-wedge
realising an adjacency from $E_u$ to $E_v$ avoiding $\calZ$.
\end{prop}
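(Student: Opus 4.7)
The plan is to follow the blueprint of Proposition~2.5~of~\cite{LR}: exhibit $\Lambda$ as a finite intersection of locally closed subschemes of $\calX^{Sing}_{\infty,\infty}$, each encoding one of the three defining properties of ``realising an adjacency from $E_u$ to $E_v$ avoiding $\calZ$''. Each such property must be formulated so that it depends only on a finite number of coefficients of the coordinate power series of the wedge, so that it defines a locally closed subset of $\calX^{Sing}_{\infty,\infty}$ by pullback from a suitable jet-space truncation.

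First I would handle the condition on the special arc. Evaluation at $s=0$ defines a morphism of $\KK$-schemes $\varepsilon : \calX^{Sing}_{\infty,\infty} \to \calX_\infty$ sending $\beta$ to $\beta_s$. The subset $\dot N_{E_v} \subset \calX_\infty$ is locally closed, being the intersection of $N_{E_v}$ with the open condition that the lift meets $E$ transversely through a smooth point of $E$; in a resolution chart this transversality reduces to the non-vanishing of a specific initial coefficient. The preimage $\varepsilon^{-1}(\dot N_{E_v})$ is then a locally closed subscheme of $\calX^{Sing}_{\infty,\infty}$ containing the $K$-point $\alpha$.

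Next I would impose the two conditions on the generic arc: $\alpha_g \in N_{E_u}$ and $\alpha_g \notin \calZ$. Choose a resolution chart $V \subset \tilde X$ containing the image of the closed point of the lift $\tilde\alpha_g$, with local coordinates $(u,w)$ so that $E_u \cap V = \{u=0\}$. Since $\pi$ is obtained by blowing up an explicit ideal, the lift of any wedge to $V$ is given by explicit rational expressions in its coordinate series, well-defined after inverting $s$ on the generic arc. Thus ``$\alpha_g$ lifts into $V$ with closed point on $E_u$'' decomposes as an open condition (non-vanishing of finitely many coefficient expressions, ensuring the lift lands in $V$) together with a closed condition (vanishing of the $t$-constant term of $u$ along the lift). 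For the avoidance of $\calZ$ I would invoke the generic stability of $\overline N_{E_u}$ (as in Corollary~4.6~(i) of~\cite{Re}) to find an affine open $U \subset \calX_\infty$ containing $\alpha_g$ on which $\calZ \cap U$ is cut out by finitely many equations, and translate ``$\alpha_g \in U \setminus \calZ$'' into the non-vanishing of finitely many expressions in the Taylor coefficients of the wedge.

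Setting $\Lambda$ to be the intersection of the locally closed subsets produced above yields a locally closed subscheme of $\calX^{Sing}_{\infty,\infty}$ containing $\alpha$, and by construction each $L$-point of $\Lambda$ corresponds to an $L$-wedge realising an adjacency from $E_u$ to $E_v$ avoiding $\calZ$. The main obstacle lies in the third paragraph: the generic arc $\alpha_g$ naturally lives over $K((s))$ rather than over $K$, so conditions on it do not pull back along a morphism of schemes in an obvious way. The hypothesis on the special point of the special arc mapping to the relevant generic point of the exceptional divisor is precisely what is needed here: it guarantees that the coordinate charts chosen for the lifts are generic enough that the resulting open and closed conditions on finitely many Taylor coefficients remain valid after arbitrary base change $K \subset L$, and in particular that the constructed $\Lambda$ sees the desired properties at every $L$-point.
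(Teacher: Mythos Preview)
Your overall architecture---split $\Lambda$ into three locally closed pieces for (i) $\alpha_s\in\dot N_{E_v}$, (ii) $\alpha_g\in N_{E_u}$, (iii) $\alpha_g\notin\calZ$---matches the paper's. But two of the three pieces have real gaps.

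For (iii), invoking generic stability of $\overline N_{E_u}$ is misplaced: that notion says the ideal of $\overline N_{E_u}$ is the radical of a finitely generated ideal on some open, not that the ideal of $\calZ$ is. The paper's argument is much simpler and needs no finiteness for $\calZ$: pick a single polynomial $H$ in the ideal of $\calZ$ (in $\KK[A_{i,j}]$) with $H(\alpha_g)\neq 0$; since $H$ involves only finitely many variables, the lowest nonzero $s$-coefficient of $H(a_{i,j}(s))$ is a polynomial $\Phi_\alpha$ in finitely many $B_{i,j,k}$, and the open condition $\Phi_\alpha\neq 0$ forces $\alpha'_g\notin\calZ$. For (ii), working in a resolution chart with ``explicit rational expressions after inverting $s$'' does not by itself yield a locally closed condition on the wedge coefficients. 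The paper's mechanism is to freeze the integer $M=m(\alpha)$ and impose the systems $S_1(M)$, $S_2(M)$ on the $a'_{i,j}(s)$ (equalities in $K[[s]]$, hence closed on the $B_{i,j,k}$) together with the single open condition $P_\alpha(b'_{i,j,k})\neq 0$, where $P_\alpha$ is the ``characteristic polynomial'' of $\alpha$: one specific $s$-coefficient of one specific $Q_{l,M}$. The inequality pins the first order of $\alpha'_g$ at exactly $M$, after which $S_2(M)$ correctly detects membership in $N_{E_u}$.

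Finally, you locate the hypothesis on the special point in the wrong place. It is not used for the generic-arc conditions; it is used for (i). The paper does not assert that $\dot N_{E_v}$ is locally closed. Instead Proposition~\ref{nivelarcos}(2) supplies a particular Zariski open $U$ in the divisor and shows $\dot N(U,m(\alpha_s))$ is locally closed; the hypothesis that the lift of $\alpha_s$ hits the generic point guarantees that $\alpha_s$ actually lies in this smaller set, since the generic point belongs to every nonempty open $U$. Pulling this back along the special-arc morphism $\calS:\calX^{Sing}_{\infty,\infty}\to\calX_\infty$ gives $\Theta_1$.
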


Some discussion is needed before proving the Proposition.

\subsubsection{Power series expansions of arcs with transverse lifting}
\label{subsubarcos}
Here we give finitely many polynomial conditions ensuring than an arc is in $N_{E_u}$ or in $\dot{N}_{E_u}$ for a 
certain component $E_u$ of the exceptional divisor. The discussion is similar to that of Proposition~3.8~of~\cite{Re}.

By Main Theorem I~of~\cite{Hi1} there exists an ideal $\calJ$ in the coordinate ring $\KK[X]$ of $X$ such that the resolution $\pi$ is the blow up with
respect to $\calJ$ and such that its zero set is the origin. Consider polynomials $g_0,...,g_s$ in $x_1,...,x_N$ which generate $\calJ$.
Let $\tilde{X}$ be the Zariski closure in $X\times\PP^s$ of the graph of the mapping 
\[(g_0:...:g_s):X\setminus\{O\}\to\PP^s.\]
The morphism $\pi:\tilde{X}\to X$ coincides with the restriction to $\tilde{X}$ of the projection of $X\times\PP^s$ to the first factor. Consequently,
the exceptional divisor $E$ is an algebraic subset of $\PP^s$. Denote by $J_u$ the homogeneous ideal associated to the component $E_u$; 
let $h_{u,1},...,h_{u,k_u}$ be homogeneous polynomials generating $J_u$.

Consider indeterminates $A_{i,j}$, where $i\in\{1,...,N\}$ and $j\in\NN$, for each $i\leq N$ we consider the series 
\[Y_i(t):=\sum_{j\in\NN}A_{i,j}t^j\]
inside the ring $\KK[A_{i,j}][[t]]$. The tuple 
$$\Gamma(t)=(Y_1(t),...,Y_N(t))$$
is a $\KK[A_{i,j}]$-point in the arc space of $\KK^N$ which we call the universal arc. The coordinates of any $K$-arc are obtained by substituting the variables $A_{i,j}$ by values $a_{i,j}\in K$.

For any $l\in\{0,...,s\}$, if we consider the power series expansion
\[g_l(Y_1(t),...,Y_n(t)):=\sum_{k\in\NN}Q_{l,k}t^k,\]
each coefficient $Q_{l,k}$ is a polynomial in $\KK[A_{i,j}]$, for $i\in\{1,...,N\}$ and $j\leq k$.

\begin{definition}
\label{1char}
Given any $K$-arc 
\[\gamma(t)=(...,\sum_{j\in\NN}a_{i,j}t^j,...)\]
we define the first order of $\gamma$ with respect to $\pi$ to be the minimum $m(\gamma)$ 
of the orders in $t$ of the power series
$g_i(\gamma(t))$ for $i\in\{0,...,s\}$.
\end{definition}

Consider the system $S_1(M)$ of polynomial equations:
\begin{equation}
\label{ordereqn}
Q_{l,k}(A_{i,j})=0\quad\text{for}\quad l\in\{0,...,s\}\quad\text{and}\quad k<M.
\end{equation}
By definition of $m(\gamma)$, the coefficients $a_{i,j}$ of the coordinate
series of $\gamma$ satisfy the system $S_1(M)$ for $M=m(\gamma)$, but not for $M=m(\gamma)+1$. This leads to the following:

\begin{remark}
\label{uppersem}
The first order of $\gamma$ with respect to $\pi$ is upper semicontinuous in the Zariski topology of $\calX_\infty$.
\end{remark}

Given any arc $\gamma\in\calX_\infty$, the lifting
\[\tilde{\gamma}:Spec(K[[t]])\to\tilde{X}\]
of $\gamma$ to $\tilde{X}$ has the coordinate expansion
\begin{equation}
\label{lifting}
\tilde{\gamma}(t):=((x_1(\gamma(t)),...,x_N(\gamma(t)),(g_0(\gamma(t)):...:g_s(\gamma(t)))\in\CC^N\times\PP^s.
\end{equation}

Dividing each $g_i(\gamma(t))$ by $t^{m(\gamma)}$, we can evaluate $\tilde{\gamma}$ at $0$ and obtain:
\begin{equation}
\label{specialpoint}
\tilde{\gamma}(0)=((0,...,0),(Q_{0,m(\gamma)}(a_{i,j}):...:Q_{s,m(\gamma)}(a_{i,j})),
\end{equation}
where at least one of the coordinates $Q_{l,m(\gamma)}(a_{i,j})$ is different from $0$.

The arc $\gamma$ belongs to $N_{E_u}$ if and only if its special point is sent inside $E_u$, and this happens precisely when 
\begin{equation}
h_{u,e}(Q_{0,m(\gamma)}(a_{i,j}):...:Q_{s,m(\gamma)}(a_{i,j}))=0.
\end{equation}

Hence, for any $M$, we define the system $S_2(M)$ 
of polynomial equations with coefficients in $\KK$ in the indeterminates $A_{i,j}$ 
\begin{equation}
\label{specialpointEu}
h_{u,e}(Q_{0,M}(A_{i,j}):...:Q_{s,M}(A_{i,j}))=0,
\end{equation}
for $e\in\{1,...,k_u\}$. Rephrasing the above discusion we have that $\gamma$ belongs to $N_{E_u}$ if and only if 
the coefficients of $\gamma$ satisfy the system $S_2(m(\gamma))$.

Let $U$ be a Zariski open subset of $E_u$. Let $C:=E_u\setminus U$. An argument similar to the one above shows that,
for any $M\in\NN$ there exists a system of finitely many polynomial equations $S_3(M)$ such that the lifting of 
an arc $\gamma\in\calX_\infty$ meets $C$ if and only if the coefficients of $\gamma$ satisfy the system $S_3(m(\gamma))$.

In order to characterise transverse arcs we recall the proof of Lemma~2.3~of~\cite{LR}.
Let $\nu$ be the divisorial valuation associated to $E_u$. Suppose, after a possible reordering, that $n_0:=\nu(g_0)=\min(\nu(g_0),...,\nu(g_s))$. There is a Zariski open subset $U$ of $E_u$ such that an arc
$\gamma$ has transverse lifting to $E_u$ through a point of $U$ if an only if $ord_t(g_0(\gamma))=n_0$. Moreover,
for any arc having lifting through $E_u$ we have $ord_t(g_0(\gamma))\geq n_0$. Rephrasing, if $\gamma$ is an arc with
lifting through $E_u$ then $\gamma$ lifts tranversely to $E_u$ if and only if the following polynomial inequality
is satisfied by the coefficients of $\gamma$:
\begin{equation}
\label{translr}
Q_{0,n_0}(A_{i,j})\neq 0.
\end{equation}

After this discussion the following proposition is obvious:

\begin{prop}
\label{nivelarcos}
For any essential divisor $E_u$ and a natural number $M$ we have:
\begin{enumerate}
\item the set $N(E_u,M)$ of arcs with lifting
through $E_u$ and first order with respect to $\pi$ equal to $M$ is the set of arcs in $\calX_\infty$ whose coefficients
satisfy the systems $S_1(M)$, $S_2(M)$ and do not satisfy the system $S_1(M+1)$. This is a finite ammount of polynomial equalities
and inequelities in the coefficients of the arc.
\item there is a Zariski open subset $U\subset E_u$ such that the set of arcs $\dot{N}(U,M)$ of arcs
with transverse lifting through $U$ and first order with respect to $\pi$ equal to $M$ is  a locally 
closed subset of $\calX_\infty$ defined by finitely many polynomial equalities and inequalities.
\end{enumerate}
\end{prop}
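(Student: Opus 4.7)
The proposition is essentially a bookkeeping statement compiling the explicit polynomial characterisations built up in the preceding subsection. The plan is to verify (1) and (2) by reading off, line by line, what each of the systems $S_1(M)$, $S_2(M)$, $S_3(M)$ and the inequality \eqref{translr} encodes on the coefficient variables $A_{i,j}$ of the universal arc. The only non-trivial point to monitor is finiteness: each $Q_{l,k}$ depends only on the $A_{i,j}$ with $j\leq k$ (since $g_l$ is a polynomial and the coefficient of $t^k$ in $g_l(\Gamma(t))$ only involves the coefficients of $Y_i$ up to degree $k$), and each $h_{u,e}$ is then applied to finitely many such $Q_{l,M}$. Hence all of the systems really are finite systems of polynomial conditions on the affine scheme $\calX_\infty$.

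For (1): by Definition~\ref{1char} the inequality $m(\gamma)\geq M$ is equivalent to the vanishing of $Q_{l,k}$ on the coefficients of $\gamma$ for all $l\leq s$ and all $k<M$, i.e.\ to $\gamma$ satisfying $S_1(M)$. Therefore $m(\gamma)=M$ is equivalent to $\gamma$ satisfying $S_1(M)$ but not $S_1(M+1)$. Assuming this, formula \eqref{specialpoint} identifies the special point $\tilde\gamma(0)$ with $(Q_{0,M}:\dots:Q_{s,M})$ evaluated on the coefficients, and this point lies in $E_u$ precisely when it is annihilated by the homogeneous generators $h_{u,e}$ of $J_u$, i.e.\ exactly when $\gamma$ satisfies $S_2(M)$. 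The conjunction of these three conditions is by definition $N(E_u,M)$, and by the finiteness observation above it is carved out of $\calX_\infty$ by finitely many polynomial equalities and inequalities.

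For (2): one applies the criterion recalled from Lemma~2.3~of~\cite{LR} just before \eqref{translr}. After a reordering of $g_0,\dots,g_s$ one may assume $n_0=\nu(g_0)=\min_l\nu(g_l)$; the criterion supplies a Zariski open subset $U\subset E_u$ (the complement of the locus where the leading term of $g_0$ vanishes along $E_u$) such that, for any arc $\gamma$ already lifting through $E_u$, the lifting is transverse at a point of $U$ if and only if $\mathrm{ord}_t g_0(\gamma)=n_0$, i.e.\ $Q_{0,n_0}(a_{i,j})\neq 0$. Intersecting the locally closed set $N(E_u,M)$ obtained in (1) with this single open condition gives $\dot N(U,M)$, which is therefore locally closed and cut out by finitely many polynomial equalities and inequalities. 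Since all ingredients have already been assembled, the main (and only) obstacle is the finiteness of the systems, which is precisely the observation made in the first paragraph.
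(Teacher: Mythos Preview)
Your proof is correct and matches the paper's approach exactly: the paper itself gives no proof, remarking only that ``After this discussion the following proposition is obvious,'' and your write-up simply spells out that discussion---verifying that $S_1(M)$, $S_2(M)$ and the inequality~\eqref{translr} each encode the relevant condition on finitely many coefficient variables. There is nothing to add.
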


\subsubsection{Power series expansions of wedges realising adjacencies}
Given indeterminates $B_{i,j,k}$ with $1\leq i\leq N$ and $j,k\in\NN$ we the power series expansion
\[\Omega(t,s):=(\sum_{j,k\in\NN}B_{1,j,k}t^js^k,...,\sum_{j,k\in\NN}B_{i,j,k}t^js^k,...,\sum_{j,k\in\NN}B_{N,j,k}t^js^k).\]
is a $\KK[B_{i,j,k}]$-point in the space of wedges in $\KK^N$, which we call the universal wedge. 
A $K$-wedge $\alpha$ is obtained substituting the variables $B_{i,j,k}$ by elements $b_{i,j,k}\in K$. Its associated 
generic arc $\alpha_g(t)$ has coordinate power series
\begin{equation}
\label{coefsarcgen1}
\sum_{j\in\NN} a_{i,j}t^j
\end{equation}
with
\begin{equation}
\label{coefsarcgen2}
a_{i,j}=\sum_{k\in\NN}b_{i,j,k}s^k\in K[[s]].
\end{equation}

\begin{definition}
\label{2char}
Let $\alpha$ be a $K$-wedge. We define the {\em first order $m(\alpha)$ of $\alpha$ with respect to $\pi$} to be equal to the first order $m(\alpha_g)$ of its generic arc with respect to $\pi$.
\end{definition}

\begin{definition}
\label{3char}
Let $l$ be the minimal index such that $B_{l,m(\alpha_g)}(a_{i,j})\neq 0$. {\em The second order of $\alpha$ with
respect to $\pi$} is the order of $B_{l,m(\alpha_g)}$ in $s$, and is denoted by $R(\alpha)$.
\end{definition}

Since $B_{l,m(\alpha_g)}$ only depends on the coefficients $a_{i,j}$ when 
$j\leq m(\alpha_g)$, the coefficient of the term of order $R(\alpha)$ of $B_{0,m(\alpha_g)}$ is obtained by substitution
of the variables $B_{i,j,k}$ by the coefficients $b_{i,j,k}$ in a polynomial
\begin{equation}
\label{qwer}
P_\alpha(B_{i,j,k})
\end{equation}
in $\KK[B_{i,j,k}]$ for $j\leq m(\alpha_g)$ and $k\leq R(\alpha)$.

\begin{definition}
\label{4char}
{\em The characteristic polynomial $P_\alpha$ of $\alpha$ with respect
to $\pi$} is the polynomial in $\KK[B_{i,j,k}]$ defined in~\ref{qwer}.
\end{definition}

\begin{lema}
\label{wedgeporE_u}
Given a $K$-wedge $\alpha$ and a $K'$-wedge $\alpha'$ (with $K$ and $K'$ possibly different). Denote by $b'_{i,j,k}$ the coefficients of the coordinate 
series of $\alpha'$ and by $a'_{i,j}$ the coefficients of the coordinate series of $\alpha'_g$. If the systems of equations $S_1(m(\alpha))$ (see~(\ref{ordereqn})) and~$S_2(m(\alpha))$ (see~(\ref{specialpointEu})) are satisfied substituting
$A_{i,j}$ by $a'_{i,j}$, and we have
\begin{equation}
\label{ineqlocal}
P_{\alpha}(b'_{i,j,k})\neq 0
\end{equation}
then the generic arc $\alpha'_g$ belongs to $N_{E_u}$ and the first characteristic order of $\alpha'$ 
with respect to $\pi$ equal to $m(\alpha)$. 
\end{lema}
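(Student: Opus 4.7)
The plan is essentially to unpack the definitions in Section~\ref{subsubarcos} and the subsection on power series expansions of wedges, and verify that the hypotheses give exactly the two conclusions: $m(\alpha')=m(\alpha)$ as the first order of $\alpha'$ with respect to $\pi$, and membership $\alpha'_g\in N_{E_u}$. The statement is crafted so that each conclusion is read off from one specific hypothesis after substituting $A_{i,j}\mapsto a'_{i,j}$ or $B_{i,j,k}\mapsto b'_{i,j,k}$ in the universal polynomials.

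First I would establish that $m(\alpha')\geq m(\alpha)$. The hypothesis that $a'_{i,j}$ satisfies the system $S_1(m(\alpha))$ of~(\ref{ordereqn}) says precisely that $Q_{l,k}(a'_{i,j})=0$ for every $l\in\{0,\dots,s\}$ and every $k<m(\alpha)$, which by the definition of $Q_{l,k}$ means that each power series $g_l(\alpha'_g(t))$ has vanishing coefficients of $t^k$ in degrees below $m(\alpha)$. Taking the minimum of the orders over $l$ in Definition~\ref{1char}, applied to the generic arc $\alpha'_g$, yields $m(\alpha')=m(\alpha'_g)\geq m(\alpha)$. For the reverse inequality I would invoke Definitions~\ref{3char} and~\ref{4char}: by construction, the polynomial $P_\alpha$ evaluated at $b'_{i,j,k}$ computes the coefficient of $s^{R(\alpha)}$ in $Q_{l,m(\alpha)}(a'_{i,j})\in K'[[s]]$ for the distinguished index~$l$. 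The hypothesis $P_\alpha(b'_{i,j,k})\neq 0$ therefore forces $Q_{l,m(\alpha)}(a'_{i,j})\neq 0$, so $\mathrm{ord}_t\,g_l(\alpha'_g(t))=m(\alpha)$, giving the equality $m(\alpha')=m(\alpha)$.

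With this equality in hand, the second conclusion follows by substitution into the formula~(\ref{specialpoint}) for the special point of the lifting. The lifted arc $\widetilde{\alpha'_g}$ evaluates at $0$ to
\[\widetilde{\alpha'_g}(0)=\bigl((0,\dots,0),\,(Q_{0,m(\alpha)}(a'_{i,j}):\dots:Q_{s,m(\alpha)}(a'_{i,j}))\bigr),\]
where the projective tuple is a well-defined $K'((s))$-point of $\PP^s$ precisely because the coordinate of index~$l$ is nonzero by the previous paragraph. The hypothesis that $a'_{i,j}$ satisfies the system $S_2(m(\alpha))$ of~(\ref{specialpointEu}) is the statement that every generator $h_{u,e}$ of the homogeneous ideal $J_u$ of $E_u$ vanishes on this tuple; hence the special point of the lifting lies in $E_u$, i.e., $\alpha'_g\in N_{E_u}$.

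I do not expect a genuine obstacle here, since the characteristic polynomial and the systems $S_1$, $S_2$ have been set up in Section~\ref{subsubarcos} exactly to encode these properties as polynomial relations in the coefficients. The only point that requires mild care is the bookkeeping check that $P_\alpha$ depends only on the variables $B_{i,j,k}$ for $j\leq m(\alpha)$ and $k\leq R(\alpha)$, so that evaluation at $b'_{i,j,k}$ is literally the coefficient of $s^{R(\alpha)}$ in $Q_{l,m(\alpha)}(a'_{i,j})$; this is immediate from the fact that $Q_{l,m(\alpha)}$ only involves the $A_{i,j}$ with $j\leq m(\alpha)$, and these are $s$-truncations of the series $\sum_k B_{i,j,k}s^k$.
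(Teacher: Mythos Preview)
Your proof is correct and follows essentially the same approach as the paper's own proof: both deduce $m(\alpha')\geq m(\alpha)$ from $S_1(m(\alpha))$, then use $P_\alpha(b'_{i,j,k})\neq 0$ to force $Q_{l,m(\alpha)}(a'_{i,j})\neq 0$ (equivalently, that $S_1(m(\alpha)+1)$ fails) and hence equality, and finally read off membership in $N_{E_u}$ from $S_2(m(\alpha))$. Your version is simply more explicit in unpacking the definitions and the formula for the special point of the lifting, whereas the paper compresses the last step into a citation of Proposition~\ref{nivelarcos}.
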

\begin{proof}
Since the system of equations $S_1(m(\alpha))$ is satisfied by the $a'_{i,j}$'s the first characteristic order of $\alpha'$ is at least $m(\alpha)$. Since
\begin{equation}
P_{\alpha}(b'_{i,j,k})\neq 0
\end{equation}
the system $S_1(m(\alpha))+1)$ is not satisfied by the the $a'_{i,j}$'s, 
and thus the first characteristic order of $\alpha'$ equals $m(\alpha)$. Since the system $S_2(m(\alpha))$ is satisfied 
by the $a'_{i,j}$'s, by Proposition~\ref{nivelarcos} the generic arc of $\alpha'$ belongs to $N_{E_u}$.
\end{proof}

\subsubsection{Wedges avoiding a closed subset}
\label{wedgesavoiding}

Let $\calZ$ be a proper closed subset of $\overline{N}_{E_u}$.
Let $\calI$ the ideal defining $\calZ$ in $\AAA_\KK^N$. Notice that the coordinate ring of the space of arcs in $\AAA_\KK^N$ is ring of polynomials in the indeterminates $A_{i,j}$ with coefficients in $\KK$.

Let $\alpha$ be a $K$-wedge with coordinate series
\[(\sum_{j,k\in\NN}b_{1,j,k}t^js^k,...,\sum_{j,k\in\NN}b_{i,j,k}t^js^k,...,\sum_{j,k\in\NN}b_{N,j,k}t^js^k).\]
Its generic arc $\alpha_g$ is of the form~(\ref{coefsarcgen1}), with the $a_{i,j}$ defined by Formula~(\ref{coefsarcgen2}).
If $\alpha_g$ is a $K((s))$-point of $N_{E_u}\setminus\calZ$ then there is a polynomial $H\in\calI$ such that 
$H(\alpha_g)=H(a_{i,j})$ is a non-zero power series in $K[[s]]$. Since $H$ depends on finitely many variables $A_{i,j}$ there is
a positive $l_1$ such that for any variable $A_{i,j}$ we have that $j\leq l_1$. 
Let $l_2$ be the order in $s$ of $H(a_{i,j})$. Clearly, there is a polynomial 
\begin{equation}
\label{polyavoiding}
\Phi_\alpha\in\KK[B_{i,j,k}]
\end{equation}
depending only on the 
variables $B_{i,j,k}$ for $i\leq N$, $j\leq l_1$ and $k\leq l_2$ such that the coefficient in $s^{l_2}$ is a non-zero element 
of $K$ obtained by substitution of $b_{i,j,k}$ into the polynomial $\Phi_\alpha$.

After this preparations we can prove Proposition~\ref{locallyclosed}.
\begin{proof}[Proof of Proposition~\ref{locallyclosed}]
Let $m(\alpha_s)$ be the first order of the special arc of the wedge $\alpha$ with respect to $\pi$. By Proposition~\ref{nivelarcos} the set $\dot{N}(U,m(\alpha_s))$ is a locally closed subset of $\calX_\infty$.
The mapping 
\[\calS:\calX^{Sing}_{\infty,\infty}\to\calX_\infty\]
assigning to any wedge its special arc is a morphism of infinite dimensional algebraic varieties. Therefore
$\Theta_1:=\calS^{-1}(\dot{N}(U,m(\alpha_s))))$ is a locally closed subset of $\calX^{Sing}_{\infty,\infty}$.

Let $m(\alpha)$ be the first order of the wedge $\alpha$ with respect to $\pi$. Since the generic arc $\alpha_g$ 
belongs to $N_{E_u}$, by Proposition~\ref{nivelarcos} the systems $S_1(m(\alpha))$ and $S_2(m(\alpha))$ are satisfied
by the coordinates $a_{i,j}=\sum_{k\in\NN}b_{i,j,k}s^k$ of the generic arc $\alpha_g$.

Let $P_\alpha\in\KK[b_{i,j,k}]$ be the characteristic polynomial of $\alpha$ with respect to $\pi$. Any wedge
$$\alpha'(t,s)=(\sum_{j,k\in\NN}b'_{1,j,k}t^js^k,...,\sum_{j,k\in\NN}b'_{N,j,k}t^js^k)$$ 
such that the coefficients of its generic arc satisfy system $S_1(m(\alpha))$ and such that the inequality
\begin{equation}
\label{inlr}
P_\alpha(b'_{1,j,k})\neq 0
\end{equation}
holds has first order with respect to $\pi$ equal to $m(\alpha')$.

We conclude that any wedge $\alpha'$ such that the systems $S_1(m(\alpha))$ and $S_2(m(\alpha))$ are satisfied
by the coordinates $a_{i,j}=\sum_{k\in\NN}b_{i,j,k}s^k$ of the generic arc $\alpha'_g$, and such that 
inequality~(\ref{inlr}) holds has a generic arc belonging to $N_{E_u}$. We denote by $\Theta_2$ the corresponding locally closed
subset of $\calX^{Sing}_{\infty,\infty}$.

The locally closed subset $\Lambda=\Theta_1\cap\Theta_2$ has the desired properties.
\end{proof}

\subsection{Specialisation of wedges}
Let $E_u$, $E_v$ be components of the exceptional divisor of the resolution $\pi$, being $E_v$ essential.
By Proposition~3.8~of~\cite{Re} the set $\overline{N}_{E_v}$ is generically stable. Let
$\calZ$ be any proper closed subset of $\overline{N}_{E_u}$. By Lemma~\ref{curveselectionlemma} and obtain a finite extension $K$ of $k_z$ and a 
$K$-wedge $\alpha$ whose special arc is the generic arc of $N_{E_v}$ and whose generic arc is belongs to 
$N_{E_u}\setminus\calZ$. We follow an specialisation method of~\cite{LR} to produce a wedge defined over the base field satisfying the same requirements. 

\begin{prop}
\label{especializacion}
Suppose that $\KK$ is uncountable. Suppose that
there is an adjacency from $E_u$ to $E_v$, that is we have the inclusion $N_{E_v}\subset\overline{N}_{E_u}$.
Let $\calZ$ be any proper closed subset of $\overline{N}_{E_u}$.
There exists a formal $\KK$-wedge realising an adjacency from $E_u$ to $E_v$ and avoiding $\calZ$.
\end{prop}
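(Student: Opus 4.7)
The plan is to combine the improved Curve Selection Lemma~\ref{curveselectionlemma} with Proposition~\ref{locallyclosed} and a finite-dimensional descent argument in the style of~\cite{LR}.

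\emph{Step 1.} Let $z$ be the generic point of the irreducible closed subscheme $\overline{N}_{E_v}\subset\calX_\infty$, and $k_z$ its residue field. By Proposition~3.8 of~\cite{Re}, $\overline{N}_{E_v}$ is generically stable. The hypothesis $N_{E_v}\subset\overline{N}_{E_u}$ combined with $E_u\neq E_v$ gives the strict inclusion $\overline{N}_{E_v}\subsetneq\overline{N}_{E_u}$, and $\calZ$ is by assumption a proper closed subset of $\overline{N}_{E_u}$. Applying Lemma~\ref{curveselectionlemma} with $N=\overline{N}_{E_v}$ and $N'=\overline{N}_{E_u}$ produces a finite extension $k_z\subset K$ and a $K$-wedge $\alpha$ whose special arc equals $z$ itself (so its lifting is transverse to $E_v$ at the generic, hence non-singular, point of $E_v$) and whose generic arc lies in $N_{E_u}\setminus\calZ$. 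Thus $\alpha$ realises an adjacency from $E_u$ to $E_v$ avoiding $\calZ$, with the image of the special point of the special arc equal to the generic point of $E_v$.

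\emph{Step 2.} Feed $\alpha$ into Proposition~\ref{locallyclosed} to obtain a locally closed subset $\Lambda\subset\calX^{Sing}_{\infty,\infty}$ containing the $K$-point corresponding to $\alpha$, such that every $L$-point of $\Lambda$, for any field extension $\KK\subset L$, corresponds to an $L$-wedge realising an adjacency from $E_u$ to $E_v$ avoiding $\calZ$.

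\emph{Step 3.} The construction in the proof of Proposition~\ref{locallyclosed} cuts out $\Lambda$ by finitely many polynomial equalities and inequalities with coefficients in $\KK$ (the systems $S_1,S_2,S_3$ encoding the transverse-lifting and liftability conditions for the special and generic arcs, the characteristic polynomial inequality $P_\alpha\neq 0$ of Lemma~\ref{wedgeporE_u}, and the single polynomial inequality $\Phi_\alpha\neq 0$ from~\eqref{polyavoiding} witnessing avoidance of $\calZ$), all involving only finitely many of the universal coefficients $B_{i,j,k}$. Let $\AAA^M_\KK$ be the affine space in these relevant variables; then $\Lambda$ is the preimage of a locally closed subvariety $\Lambda_{\mathrm{fin}}\subset\AAA^M_\KK$ defined over $\KK$, and the projection of the $K$-point coming from $\alpha$ witnesses $\Lambda_{\mathrm{fin}}(K)\neq\emptyset$. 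Since $\KK$ is an uncountable algebraically closed field, the Nullstellensatz yields $\Lambda_{\mathrm{fin}}(\KK)\neq\emptyset$; lifting such a point to a $\KK$-point of $\Lambda$ by setting the remaining coefficients $B_{i,j,k}$ equal to $0$ produces the required $\KK$-wedge.

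The main obstacle is the rigorous verification in Step 3 that the condition ``realises an adjacency from $E_u$ to $E_v$ avoiding $\calZ$'' can be encoded by finitely many polynomial conditions on finitely many Taylor coefficients of the wedge; this is precisely the content developed in Subsection~\ref{spewedges} via the systems $S_i$, the characteristic polynomial $P_\alpha$, and the polynomial $\Phi_\alpha$. Once this finite-dimensional truncation is in place, the descent from the $K$-point to a $\KK$-point is routine by algebraic closure of $\KK$, with uncountability retained as a safety buffer following~\cite{LR}.
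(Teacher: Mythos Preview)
Steps~1 and~2 match the paper. The gap is in Step~3, at the phrase ``lifting such a point to a $\KK$-point of $\Lambda$ by setting the remaining coefficients $B_{i,j,k}$ equal to $0$''. The locally closed set $\Lambda$ lives inside $\calX^{Sing}_{\infty,\infty}$, which is itself cut out of the space of all formal wedges in $\AAA^N$ by the \emph{infinitely} many polynomial equations coming from $F_l(\alpha(t,s))=0$ for the generators $F_l$ of the ideal of $X$. The finitely many conditions you isolate (the systems $S_i$, $P_\alpha\neq 0$, $\Phi_\alpha\neq 0$) define $\Lambda$ only \emph{relative to} $\calX^{Sing}_{\infty,\infty}$. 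A $\KK$-point of your finite-dimensional $\Lambda_{\mathrm{fin}}$ together with zeros in the remaining coordinates is a polynomial map $(\KK^2,0)\to\AAA^N$, but there is no reason its image lies in $X$; in general it will not, so you do not obtain a point of $\calX^{Sing}_{\infty,\infty}$, let alone of $\Lambda$.

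The paper closes this gap differently: it invokes Proposition~2.10 of~\cite{I0} (valid for wedge spaces as well as arc spaces), which says that over an \emph{uncountable} algebraically closed field the $\KK$-valued points are dense in $\calX^{Sing}_{\infty,\infty}$. Since $\Lambda$ is non-empty and locally closed, density immediately gives a $\KK$-point in $\Lambda$. This is precisely where uncountability is used, and it is essential rather than a ``safety buffer'': for countable $\KK$ the closed points of an infinite-dimensional $\KK$-scheme need not be $\KK$-rational, and the Nullstellensatz argument you sketch in finite dimensions does not transfer.
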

\begin{proof}
The proof follows the method of Section~2~of~\cite{LR}.
Let $z$ be the generic point of $\overline{N}_{E_v}$ and
$k_z$ its residue field. We apply Lemma~\ref{curveselectionlemma} and obtain a finite extension $K$ of $k_z$ and a 
$K$-wedge $\alpha$ whose special arc is the generic arc of $N_{E_v}$ and whose generic arc is belongs to 
$N_{E_u}\setminus\calZ$.

Since $\KK$ is uncountable, Proposition~2.10 of~\cite{I0} (which also holds for wedges) 
implies that the closed points of the space of wedges $\calX_{\infty,\infty}$ are $\KK$-valued points. In fact the proof of this Proposition gives that
the set of $\KK$-valued points is dense in $\calX_{\infty,\infty}$. In Proposition~\ref{locallyclosed} we have proved that there is a non-empty locally closed subset $\Lambda$ of wedges realising
an adjacency from $E_u$ to $E_v$ and avoiding $\calZ$. The density of $\KK$-valued points implies the existence of
a $\KK$-point in $\Lambda$, corresponds to a $\KK$-wedge with the desired properties.
\end{proof}

\section{Approximation of wedges}
\label{appwdg}
The aim of this section is to aproximate wedges realising adjacencies, avoiding closed sets and defined over the base 
field by wedges with the same properties, but defined by algebraic power series. The idea is to use the following 
Approximation Theorem, due to Becker, Denef, Lipshitz and van den Vries (Theorem~4.1~of~\cite{BDLD}), which later
was generalised by Popescu~\cite{Po}. I thank G. Rond and H. Hauser for pointing me out these approximation theorems. 

\begin{theo}
\label{Popescu}
Let $t,s$ and $y_1,...,y_r$ be variables. Consider polynomials $G_1,...,G_k\in\KK[t,s,y_1,...,y_r]$. Suppose 
that there exist formal power series $y_1(s),...,y_{r'}(s)$ and 
$y_{r'+1}(t,s),...,y_r(t,s)$ such that $G_i(t,s,y_1(s),...,y_r(t,s))=0$ for any $i\leq k$. For any $L>0$ there exist {\em algebraic} power series $y'_1(s),...,y'_{r'}(s)$ and $y_{r'+1}(t,s),...,y_r(t,s)$ such that
$G_i(t,s,y'_1(s),...,y'_r(t,s))=0$ for any $i\leq k$ and $y'_j$ coincides with $y_j$ up to order $L$. 
\end{theo}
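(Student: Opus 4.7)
The statement is a \emph{nested} Artin approximation theorem: the variables $y_1,\ldots,y_{r'}$ are required to depend only on $s$, and this dependence must be preserved by the algebraic approximation. Ordinary Artin approximation, applied to the system $G_1,\ldots,G_k$ in the ring $\KK[[s,t]]$, would produce algebraic power series $y'_j(s,t)$ for all $j$, but there is no a priori reason why $y'_1,\ldots,y'_{r'}$ would turn out to be independent of $t$. The plan is therefore to work relative to the tower $\KK[[s]]\subset\KK[[s,t]]$ and use a nested version of the approximation machinery.

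The natural rings to consider are the henselizations $A:=\KK\langle s\rangle$ and $B:=\KK\langle s,t\rangle$ of $\KK[s]_{(s)}$ and $\KK[s,t]_{(s,t)}$, whose completions are $\hat A=\KK[[s]]$ and $\hat B=\KK[[s,t]]$. The given formal data amount to a commutative square of $\KK$-algebras: the polynomial ring $\KK[s,y_1,\ldots,y_{r'}]$ maps to $\hat A$ via $y_j\mapsto y_j(s)$, the polynomial ring $\KK[s,t,y_1,\ldots,y_r]/(G_1,\ldots,G_k)$ maps to $\hat B$ via the full formal solution, and these maps are compatible with the obvious inclusions. I would then invoke Popescu's N\'eron desingularization theorem: since the morphisms $A\to\hat A$ and $B\to\hat B$ are regular, the above square factors through a compatible square of smooth $A$- and $B$-algebras.

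To conclude, truncating the formal solution at order $L$ produces $\KK$-rational points of these smooth algebras agreeing with the formal solution modulo $s^L$ (resp.\ $(s,t)^L$), and these lift to honest $A$- and $B$-valued points because $A$ and $B$ are henselian and the smoothness established above provides the required implicit function property. The resulting algebraic power series $y'_1(s),\ldots,y'_{r'}(s)\in A$ and $y'_{r'+1}(t,s),\ldots,y'_r(t,s)\in B$ satisfy the polynomial equations and agree with the original formal solution up to order $L$.

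The main obstacle is ensuring that the smooth factorizations for $A\to\hat A$ and $B\to\hat B$ are chosen compatibly with the inclusion $A\hookrightarrow B$; this compatibility is precisely what separates nested from ordinary Artin approximation and is the content of Popescu's theorem applied in the relative setting. In the original proof of Becker--Denef--Lipshitz--van den Dries, the same compatibility is obtained instead through an ultraproduct and model-theoretic argument that is specific to characteristic zero.
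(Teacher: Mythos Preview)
The paper does not prove this theorem. It is quoted as a known result from the literature, attributed to Becker--Denef--Lipshitz--van den Dries (Theorem~4.1 of~\cite{BDLD}), with the remark that it was later generalised by Popescu~\cite{Po}; the paper then uses it as a black box in the proof of Proposition~\ref{approximation}. There is therefore no proof in the paper to compare your proposal against.

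That said, your sketch correctly identifies the essential point: this is a \emph{nested} approximation statement, and ordinary Artin approximation does not suffice because it would not preserve the constraint that $y_1,\ldots,y_{r'}$ depend only on $s$. You also correctly name the two standard routes---the ultraproduct argument of~\cite{BDLD} and the approach via Popescu's general N\'eron desingularization~\cite{Po}. The one place where your sketch is genuinely incomplete is the ``compatibility'' step: arranging smooth factorizations of $A\to\hat A$ and $B\to\hat B$ that are compatible with the inclusion $A\hookrightarrow B$ is not an automatic consequence of applying Popescu's theorem twice, and indeed nested approximation can fail in more general towers (Gabrielov's example). What makes the present situation work is that the intermediate extension $\KK\langle s\rangle\to\KK\langle s,t\rangle$ is itself regular, so one may first approximate over $\KK\langle s\rangle$ and then, with the algebraic $y'_1(s),\ldots,y'_{r'}(s)$ fixed, apply Popescu/Artin again over $\KK\langle s,t\rangle$; but making this precise requires an argument you have only gestured at. If you intend to include a proof rather than a citation, that step should be written out.
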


\begin{prop}
\label{approximation}
Given any $\KK$-wedge $\alpha$ realising an adjacency from $E_u$ to $E_v$, and any natural number $L$, there exists an
algebraic wedge $\beta$ realising an adjacency from $E_u$ to $E_v$ such that their power series expansion coincide up to order $L$. Moreover if $\calZ$ is a proper closed subset of $\overline{N}_{E_u}$ and $\alpha$ avoids it then
$\beta$ can be chosen avoiding $\calZ$.
\end{prop}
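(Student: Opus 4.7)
The plan is to apply Theorem~\ref{Popescu} directly to the coordinate series $y_i(t,s):=x_i\comp\alpha(t,s)$ of the wedge $\alpha$, using as polynomial constraints $G_l:=F_l$ the defining equations of $X\subset\KK^N$. Since $\alpha$ factors through $X$, the $y_i$ satisfy $F_l(y_1,\dots,y_N)=0$ for $l=1,\dots,r$. The theorem produces algebraic power series $y'_i(t,s)$ that still satisfy $F_l(y'_1,\dots,y'_N)=0$ and agree with $y_i$ modulo $(t,s)^{L'+1}$, for any prescribed truncation order $L'$. The tuple $(y'_1,\dots,y'_N)$ therefore defines an algebraic $\KK$-wedge $\beta$ in $(X,O)$, and the only remaining task is to arrange for $L'$ to be large enough that $\beta$ automatically realises an adjacency from $E_u$ to $E_v$ and avoids $\calZ$.

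Here the preparatory material of Section~\ref{spewedges} does all the work: each defining property of ``realises an adjacency from $E_u$ to $E_v$ avoiding $\calZ$'' can be encoded as finitely many polynomial equalities and inequalities in the coefficients $b_{i,j,k}$ of the wedge, involving only indices $j,k$ bounded by some explicit $L_0$ depending on $\alpha$. Specifically, Proposition~\ref{nivelarcos}(2) applied to the special arc $\alpha_s(t)=\alpha(t,0)$ encodes the condition $\alpha_s\in\dot{N}(U,m(\alpha_s))$ for a suitable Zariski open subset $U\subset E_v$ (i.e.~transverse lifting through a non-singular point of $E$), via the systems $S_1(m(\alpha_s))$, $S_2(m(\alpha_s))$, the failure of $S_1(m(\alpha_s)+1)$, and the transversality inequality~(\ref{translr}). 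Lemma~\ref{wedgeporE_u}, together with the characteristic polynomial $P_\alpha$ of Definition~\ref{4char}, encodes $\alpha_g\in N_{E_u}$ via the systems $S_1(m(\alpha))$, $S_2(m(\alpha))$ and the non-vanishing~(\ref{ineqlocal}) of $P_\alpha$. Finally, the polynomial $\Phi_\alpha$ of~(\ref{polyavoiding}) encodes the condition $\alpha_g\notin\calZ$. All of these conditions involve only coefficients $b_{i,j,k}$ with $j,k$ bounded by some $L_0$ that is explicitly computable from $\alpha$, the singularity, and the closed subset $\calZ$.

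Take $L':=\max(L,L_0)$ and apply Theorem~\ref{Popescu} with this truncation order to obtain $\beta$. Since $y_i$ and $y'_i$ agree modulo $(t,s)^{L'+1}$, the coefficients $b_{i,j,k}$ of $\alpha$ and $b'_{i,j,k}$ of $\beta$ coincide for all $j,k\leq L_0$. Hence the polynomial equalities listed above, which hold for $\alpha$, continue to hold for $\beta$, and the polynomial inequalities, which are non-zero for $\alpha$, remain non-zero for $\beta$, being evaluated at the same coefficients. Consequently $\beta$ is an algebraic $\KK$-wedge realising an adjacency from $E_u$ to $E_v$, avoiding $\calZ$, and agreeing with $\alpha$ up to order $L$, as required.

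The only real obstacle is to verify that \emph{every} condition defining the target property is of finite type in the bounded-order coefficients; this is already the content of Proposition~\ref{nivelarcos}, Lemma~\ref{wedgeporE_u} and \S\ref{wedgesavoiding}, so the proof reduces to careful bookkeeping of the systems $S_1$, $S_2$, the polynomials $P_\alpha$, $\Phi_\alpha$, and the explicit bounds on $j,k$.
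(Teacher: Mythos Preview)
There is a genuine gap in your argument at the step where you claim that ``all of these conditions involve only coefficients $b_{i,j,k}$ with $j,k$ bounded by some $L_0$''. This is true for the \emph{inequalities} $P_\alpha\neq 0$, $\Phi_\alpha\neq 0$, and for the conditions on the special arc (whose coefficients are the $b_{i,j,0}$). It is \emph{false} for the systems $S_1(m(\alpha))$ and $S_2(m(\alpha))$ applied to the generic arc. Those systems are polynomial equations in the variables $A_{i,j}$, but when you substitute $a_{i,j}=\sum_{k\geq 0} b_{i,j,k}s^k\in\KK[[s]]$ they must hold \emph{identically as power series in $s$}. Each such equality therefore unfolds into infinitely many polynomial equations in the $b_{i,j,k}$, with $k$ unbounded. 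Matching $\alpha$ and $\beta$ up to any finite order $L'$ in $(t,s)$ does not force $Q_{l,k}(a'_{i,j}(s))=0$ for $k<m(\alpha)$: the high-$s$-order coefficients of $a'_{i,j}(s)$ are uncontrolled, so $m(\beta_g)$ may drop below $m(\alpha)$ and the lifting of $\beta_g$ may hit the exceptional divisor at a point outside $E_u$.

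The paper's proof avoids this by exploiting the \emph{nested} feature of Theorem~\ref{Popescu}: it rewrites the wedge as in~(\ref{formaadecuada}), isolating the coefficients $a_{i,j}(s)$ for $j\leq m$ as unknowns depending on $s$ only, and the remainder $c_i(t,s)$ as unknowns depending on $(t,s)$. It then feeds the equations $F_l=0$ \emph{together with} the systems $S_1(m(\alpha))$ and $S_2(m(\alpha))$ (which are polynomial in the $A_{i,j}$, hence in the one-variable unknowns) into the approximation theorem. The output $a'_{i,j}(s)$ then satisfies $S_1$ and $S_2$ \emph{exactly}, not merely up to finite order, and Lemma~\ref{wedgeporE_u} applies. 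Your direct application of Theorem~\ref{Popescu} with all unknowns in two variables cannot impose these one-variable identities, so the decomposition~(\ref{formaadecuada}) is not cosmetic but essential.
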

\begin{proof}
If the generic arc $\alpha_g$ does not belong to $\calZ$ then set $l_1$, $l_2$ and $\Phi_\alpha$ be the natural numbers
and the polynomial introduced in~\ref{wedgesavoiding}. Otherwise set $l_1=l_2=0$.
Recall from Subsection~\ref{subsubarcos} that $n_0:=min(\nu(g_0),...,\nu(g_s))$.

We define
\[m:=\max\{m(\alpha_s),m(\alpha_g)\}\]
to be the maximum of the orders of the special and generic arcs with respect to $\pi$ and assume 
\[L>\max\{m,R(\alpha),l_1,l_2,n_0\}.\]

We write the wedge $\alpha$ according with the expression of its generic arc as
\[\alpha(t):=(\sum_{j\in\NN}a_{1,j}(s)t^j,...,\sum_{j\in\NN}a_{N,j}(s)t^j),\]
with $a_{i,j}(s)=\sum_{k\in\NN}b_{i,j,k}s^k$.
We re-group this expansion in the following way:
\begin{equation}
\label{formaadecuada}
\alpha(t,s):=(\sum_{j=1}^{m}a_{1,j}(s)t^j+c_1(t,s)t^{m+1},...,\sum_{j=1}^{m}a_{N,j}(s)t^j+c_N(t,s)t^{m+1}),
\end{equation}
with $c_i(s,t)\in\KK[[t,s]]$.

Consider variables $A_{i,j}$ and $C_i$ for $1\leq i\leq N$ and $1\leq j\leq m$. Given any generator $F_l$ of the ideal $I$ of $X$ we consider the polynomial
\[G_l(t,A_{i,j},C_k):=F_l((\sum_{j=1}^{m}A_{1,j}t^j+C_1t^{m+1},...,\sum_{j=1}^{m}A_{N,j}t^j+C_Nt^{m+1})\]
in $\KK[t,A_{i,j},C_k]$.

The fact that the image of the wedge $\alpha$ lies in $X$ is expressed by the system of equations
\begin{equation}
\label{inX}
G_l(t,a_{i,j}(s),c_k(t,s))=0,
\end{equation}
for $1\leq l\leq r$. Moreover, since the wedge $\alpha$ realises an adjacency from $E_u$ to $E_v$, the general arc $\alpha_g$ belongs to 
$N_{E_u}$, and hence the power series $a_{i,j}(s)$ satisfy the systems of
polynomial equations $S_1(m(\alpha))$ (see~(\ref{ordereqn})) and $S_2(m(\alpha))$ (see~(\ref{specialpointEu})).

By Theorem~\ref{Popescu} there exist algebraic power series $a'_{i,j}(s)$ and $c'_i(t,s)$ coinciding
respectively with $a_{i,j}(s)$ and $c_i(t,s)$ up to order $L$, for $1\leq i\leq N$ and $1\leq j\leq m$, such that they satisfy the systems of 
equations~(\ref{inX}), $S_1(m(\alpha))$ and $S_2(m(\alpha))$.

We claim that
\begin{equation}
\label{wedgealg}
\alpha'(t,s):=(\sum_{j=1}^{m}a'_{1,j}(s)t^j+c'_1(t,s)t^{m+1},...,\sum_{j=1}^{m}a'_{N,j}(s)t^j+c'_N(t,s)t^{m+1})
\end{equation}
is the required algebraic wedge.

We expand $\alpha'$ as
\[\alpha'(t,s)=(\sum_{i,j}b'_{i,j,1}t^is^j,...,\sum_{i,j}b'_{i,j,N}t^is^j).\]

The wedge $\alpha'$ is algebraic since it is a polynomial combination of algebraic series and its image lies in $X$ since it satisfy the system of 
equations~(\ref{inX}). The systems of equations~$S_1(m(\alpha))$ and $S_2(m(\alpha))$ 
are satisfied for the coefficients of $\alpha'_g$ by construction.

As $\alpha$ and $\alpha'$ coincide up to order $L$ we have $b_{i,j,k}=b'_{i,j,k}$ for $i\leq N$ and 
$j\leq M$. Since the characteristic polynomial of $\alpha$ with respect to $\pi$ only involves variables $b_{i,j,k}$
with $i\leq N$, $j\leq m(\alpha)$, 
$k\leq R(\alpha)$, and $M>\max{m(\alpha),R(\alpha)}$ we have
\[P_\alpha(b'_{i,j,k})=P_\alpha(b_{i,j,k})\neq 0.\]
Thus, since the system $S_1(m(\alpha))$ is satisfied by the coefficients of $\alpha'_g$, the first order of $\alpha'$
with respect to $\pi$ is equal to $m(\alpha)$. Hence, since the system $S_2(m(\alpha))$ is satisfied the generic arc of
$\alpha'$ belongs to $N_{E_u}$ (see Proposition~\ref{nivelarcos}).

Let $H$, $l_1$, $l_2$ and $\Phi$ as in~\ref{wedgesavoiding}. Let the generic arc $\alpha'_g$ is a $K((s))$-arc, and
hence the evaluation $H(\alpha'_g)$ is a power series belonging to $K((s))$. Its coefficient in $l_2$ is given by the 
substitution in $\Phi_\alpha$ of the variables $B_{i,j,k}$ by the coefficients $b'_{i,j,k}$. Since $\Phi_\alpha$ only depends on the
variables $B_{i,j,k}$ for $i\leq N$ and $j,k\leq L$ and we have the equality $b_{i,j,k}=b'_{i,j,k}$ in this range of 
indices we find that the coefficient is non-zero as with the case of $H(\alpha_g))$. This shows that $\alpha'_g$ does not
belong to $\calZ$.

We know that $\alpha'_s$ belongs to $\dot{N}_v$ because it coincides with $\alpha_s$ up to order $M>n_0$, and the transverseity is controlled by the non vanishing of the substitution of the coefficients of the special arc of 
$\alpha'$ in $Q_{0,n_0}(A_{i,j})$ (see Subsection~\ref{subsubarcos}).
\end{proof}

\section{Algebraic wedges and finite morphisms}
\label{awfm}

We need to refine at this point the definition of wedges realising adjacencies:

\begin{definition}
\label{wedgeadj}
Let $\gamma$ be a $\KK$-arc in $\dot{N}_{E_v}$ and $\calZ$ a proper closed subset of $\overline{N}_{E_u}$. 
A $\KK$-wedge $\alpha$ realises an adjacency from $E_u$ to $\gamma$ (avoiding $\calZ$) if its generic arc belongs to $N_{E_u}$ (and not to $\calZ$) and its special arc coincides with $\gamma$.
\end{definition}

Along this Section 
$$\pi:\tilde{X}\to X$$ 
denotes the minimal good resolution of the singularity at $O$. 

\begin{definition}
\label{branchadj}
Let $\gamma$ be an algebraic $\KK$-arc in $\dot{N}_{E_v}$.
A morphism (in the category of algebraic varieties)
\[\psi:(Z,Q)\to (X,O)\]
of germs of normal surface singularities realises an adjacency from $E_u$ to $\gamma$ if there exist a resolution of singularities
\[\rho:\tilde{Z}\to (Z,Q),\]
of the singularity of $Z$ at $Q$, with exceptional divisor $F:=\rho^{-1}(0)$ and an algebraic $\KK$-arc 
\[\varphi:Spec(\KK[[t]])\to\tilde{Z}\]
satisfying the equality $\psi\comp\rho\comp\varphi=\gamma$ and the following additional properties:
\begin{enumerate}
\item the morphism $\rho$ resolves the indeterminacy of the rational map $\pi^{-1}\comp\psi$; that is, the rational map $\tilde{\psi}:=\pi^{-1}\comp\psi\comp\rho$ is a
morphism.
\item There is an irreducible component $F_0$ such that:
\begin{enumerate}[(a)]
\item The image $\tilde{\psi}(F_0)$ is contained in $E_u$.
\item The resolution $\rho$ can be factored into $\rho_2\comp\rho_1$, where
\[\rho_1:\tilde{Z}\to W\]
is a the sequence of contractions of $(-1)$-curves which does not collapse $F_0$, and 
\[\rho_2:W\to (Z,Q)\]
is another resolution of singularities. The arc $\rho_1\comp\varphi$ sends the special point into a point $Q'$ of $F_0$ in which $F_0$
has a smooth branch $L$. Moreover $\rho_1\comp\varphi$ is transverse to $L$ at $Q'$.
\end{enumerate}
\end{enumerate}

Let $\calZ$ be a proper closed subset of $\overline{N}_{E_u}$.
We say that the morphism $\phi$ {\em realises an adjacency from $E_u$ to $\gamma$ avoiding $\calZ$} if there exist a 
resolution of singularities $\rho$, an algebraic $\KK$-arc $\varphi$, a factorisation $\rho=\rho_2\comp\rho_1$, an irreducible component $F_0$ of the exceptional divisor of $\rho$, a smooth branch germ $(L,Q')$ into $\rho_1(F_0)$ with
the properties predicted above, and such that there exists a morphism 
\[\beta:Spec(\KK[[t,s]])\to (W,Q')\] 
such that:
\begin{enumerate}[(i)]
\item The $\KK$-arc $\beta(t,0)$ coincides with $\rho_1\comp\varphi(t)$.
\item The restriction $\beta(0,s)$ is a formal parametrisation of the germ $(L,Q')$.
\item The composition $\psi\comp\rho_2\comp\beta$ is a $\KK$-wedge whose generic arc does not belong to $\calZ$.
\end{enumerate}
\end{definition}

\begin{definition}
\label{branchadj2}
A morphism (in the category of algebraic varieties)
\[\psi:(Z,Q)\to (X,O)\]
of normal surface singularities realises an adjacency from $E_u$ to $E_v$ (avoiding an proper closed subset $\calZ$
of $\overline{N}_{E_u}$) if there exists an algebraic $\KK$-arc $\gamma$ in $\dot{N}_{E_v}$ for which
the morphism realises an adjacency from $E_u$ to $\gamma$ (avoiding $\calZ$). 
\end{definition}

\begin{definition}
\label{branchadjC}
Suppose $\KK=\CC$.
\begin{enumerate}
\item Let $\gamma$ be a convergent $\CC$ arc in $\dot{N}_{E_v}$. An analytic mapping germ
\[\psi:(Z,Q)\to (X,O)\]
of normal surface singularities realises an adjacency from $E_u$ to $\gamma$ (avoiding an proper closed subset $\calZ$
of $\overline{N}_{E_u}$) if it satisfies the conditions of Definition~\ref{branchadj} in the complex analytic category.
\item An analytic mapping germ
\[\psi:(Z,Q)\to (X,O)\]
of normal surface singularities realises an adjacency from $E_u$ to $E_v$ (avoiding an proper closed subset $\calZ$
of $\overline{N}_{E_u}$) if there exists a convergent $\CC$-arc $\gamma$ in $\dot{N}_{E_v}$ for which
the mapping realises an adjacency from $E_u$ to $\gamma$ (avoiding $\calZ$).
\end{enumerate}
\end{definition}

\begin{prop}
\label{wedcov}
Let $\gamma$ be an algebraic $\KK$-arc in $\dot{N}_{E_v}$ and $\calZ$ a proper closed subset of $\overline{N}_{E_u}$.
The following holds:
\begin{enumerate}
\item If there exists a morphism realising an adjacency from $E_u$ to $\gamma$ (avoiding $\calZ$) then there exists a
$\KK$-wedge realising an adjacency from $E_u$ to $\gamma$ (avoiding $\calZ$).
\item If there exists an algebraic $\KK$-wedge realising an adjacency from $E_u$ to $\gamma$ (avoiding $\calZ$) 
then there exists a {\em finite} morphism realising an adjacency from $E_u$ to $\gamma$ (avoiding $\calZ$).
\end{enumerate}
\end{prop}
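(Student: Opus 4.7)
Set $\alpha:=\psi\circ\rho_2\circ\beta$, where $\beta$ is the wedge supplied by the $\calZ$-avoiding definition; in the non-avoiding case we build $\beta$ directly in local coordinates $(u,v)$ at $Q'\in W$ with $L=V(v)$, by deforming the transverse arc $\rho_1\circ\varphi$ along the $L$-direction (if $\rho_1\circ\varphi(t)=(u(t),v(t))$ with $v'(0)\neq 0$, take $\beta(t,s):=(u(t)+s,v(t))$). The required properties are then immediate: $\alpha(t,0)=\psi\circ\rho\circ\varphi=\gamma$; $\alpha(0,s)=\psi\circ\rho_2(L)=O$ because $L$ lies in the exceptional locus of $\rho$; and $\alpha_g\in N_{E_u}$, since $\beta_g$ lifts uniquely through $\rho_1$ by the valuative criterion of properness to a $\KK((s))$-arc on $\tilde{Z}$ whose special point lies on $F_0$ (the generic point of $L$ is a smooth point of $\rho_1(F_0)$ where $\rho_1|_{F_0}$ is a local isomorphism), and $\tilde\psi$ sends $F_0$ into $E_u$. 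The $\calZ$-avoidance is exactly clause (iii) of Definition \ref{branchadj}.

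\textbf{Part (2), the finite morphism.} We apply a Stein-type factorisation to the algebraic wedge $\alpha$. First, $\alpha$ is dominant: if its image were contained in a formal curve $C\subset X$, then every arc on $C$ would lift to $\tilde{X}$ through a single fixed component of $E$, contradicting $\alpha_s\in\dot{N}_{E_v}$ and $\alpha_g\in N_{E_u}$ with $E_u\neq E_v$. Hence $\alpha^*:\calO_{X,O}\hookrightarrow\KK[[t,s]]$ is injective. The algebraicity hypothesis means each coordinate $x_i\circ\alpha$ is algebraic over $\KK[t,s]$; since both $\KK(X)$ and $\KK(t,s)$ have transcendence degree $2$ over $\KK$, the compositum $L:=\KK(X)(t,s)\subset\KK((t,s))$ is a finitely generated algebraic, hence finite, extension of $\KK(X)$. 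Let $\psi:(Z,Q)\to(X,O)$ be the germ at the appropriate point over $O$ of the normalisation of $X$ in $L$: a finite morphism of normal algebraic surfaces. Because $\KK[[t,s]]$ is integrally closed in its fraction field, $\alpha^*$ extends uniquely to $\calO_{Z,Q}\hookrightarrow\KK[[t,s]]$, giving the factorisation $\alpha=\psi\circ\alpha'$ with $\alpha':\mathrm{Spec}(\KK[[t,s]])\to(Z,Q)$.

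\textbf{Part (2), the resolution data and main obstacle.} Choose a resolution $\rho:\tilde{Z}\to Z$ that simultaneously resolves the singularity at $Q$ and the indeterminacy of $\pi^{-1}\circ\psi$ (Hironaka), so that $\tilde\psi:=\pi^{-1}\circ\psi\circ\rho$ is a morphism. By properness, $\alpha'_s$ lifts uniquely to $\varphi$ with $\psi\circ\rho\circ\varphi=\gamma$, and $\alpha'_g$ lifts to a $\KK((s))$-arc $\varphi_g$ with $\pi\circ\tilde\psi\circ\varphi_g=\alpha_g\in N_{E_u}$; specialising $\varphi_g(0)$ as $s\to 0$ singles out an irreducible component $F_0\subset\rho^{-1}(Q)$, and since $\tilde\psi(F_0)$ is irreducible and lies inside $\pi^{-1}(O)=E$ while meeting $E_u$, we have $\tilde\psi(F_0)\subset E_u$. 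To produce $\rho=\rho_2\circ\rho_1$ together with the wedge $\beta$ on $W$, first resolve the rational map $\rho^{-1}\circ\alpha'$ by blowups of the origin of $\mathrm{Spec}(\KK[[t,s]])$ (its indeterminacy is concentrated there), and then let $\rho_1$ contract exactly those $(-1)$-curves of $\tilde{Z}$ which are distinct from $F_0$ and not in the image of the resulting source exceptional divisor. With this choice, $\rho_2^{-1}\circ\alpha'$ descends to a genuine morphism $\beta:\mathrm{Spec}(\KK[[t,s]])\to(W,Q')$ satisfying $\beta(t,0)=\rho_1\circ\varphi$, while $\beta(0,s)$ parametrises a smooth branch $L$ of $\rho_1(F_0)$ to which $\rho_1\circ\varphi$ is transverse. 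The main obstacle is this balancing act: $\rho_1$ must be coarse enough that $\alpha'$ descends to $W$ as an honest morphism, yet fine enough to preserve $F_0$ together with the transversality at $Q'$. Once $\beta$ is in place, the identity $\psi\circ\rho_2\circ\beta=\psi\circ\alpha'=\alpha$ transfers $\calZ$-avoidance from $\alpha$ to the morphism datum, verifying clause (iii) of Definition \ref{branchadj}.
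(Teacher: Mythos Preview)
Your Part (1) is correct and essentially matches the paper's argument: build a formal chart $\beta$ on $W$ at $Q'$ adapted to $L$ and to $\rho_1\circ\varphi$, then set $\alpha:=\psi\circ\rho_2\circ\beta$.

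Part (2), however, has a real gap in the construction of $W$, $\rho_1$, and $\beta$. Your finite morphism $\psi:(Z,Q)\to(X,O)$ via normalisation of $X$ in $L=\KK(X)(t,s)$ is fine, and so is the factorisation $\alpha=\psi\circ\alpha'$. The trouble is the next step. You resolve $\rho^{-1}\circ\alpha'$ by blowing up the source, obtaining $S\to\tilde Z$, and then prescribe that $\rho_1$ contract the $(-1)$-curves of $\tilde Z$ that are \emph{not} in the image of the source exceptional divisor. But for $\rho_2^{-1}\circ\alpha'$ to extend to a morphism $\beta:\mathrm{Spec}(\KK[[t,s]])\to W$, the composite $S\to\tilde Z\to W$ must contract every source exceptional curve to a point; your prescription does exactly the opposite, preserving their images in $W$. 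Even after reversing this, nothing guarantees that the curves you wish to contract are $(-1)$-curves at the moment you contract them, that $W$ stays smooth (so that $\rho_2$ is a resolution), or that $\beta(0,s)$ lands on a \emph{smooth} branch of $\rho_1(F_0)$ met transversally by $\rho_1\circ\varphi$. Your own remark about the ``balancing act'' identifies the difficulty but does not resolve it. Separately, defining $F_0$ by ``specialising $\varphi_g(0)$ as $s\to 0$'' presupposes exactly the family over $\mathrm{Spec}(\KK[[s]])$ that you are trying to build.

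The paper sidesteps all of this with one clean idea: instead of working inside $Z$, take the \emph{graph} of $\alpha$ in $\KK^2\times\KK^N$, let $Y_2$ be the $2$-dimensional component through it, and normalise to get $Y_3$. Because $t,s$ are honest coordinates on $Y_3$ via the first projection, the map $\beta:\mathrm{Spec}(\KK[[t,s]])\to Y_3$ is a formal \emph{isomorphism} at its image; in particular $Y_3$ is smooth there. Now $W$ is simply the minimal resolution of (a completion of) $Y_3$, which changes nothing near this smooth point, so the lift $\beta'$ to $W$ is automatic and is again a formal isomorphism. Stein factorisation of $\overline{Y}_3\to\overline X$ produces the finite $\psi:Z\to X$, and $\rho$ is chosen to dominate the contraction $\overline{Y}_3\to Z$; the factorisation $\rho=\rho_2\circ\rho_1$ then falls out with $\rho_2$ the composite through $W$. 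The component $F_0$ is identified as the one containing the image under $\beta'$ of the $s$-axis, and transversality of $\rho_1\circ\varphi$ to a smooth branch $L\subset\rho_1(F_0)$ is immediate because $\beta'$ is an isomorphism of formal neighbourhoods. The graph construction is what buys you smoothness at the key point and hence the morphism $\beta$ for free.
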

\begin{proof}

Let
\[\psi:(Z,Q)\to (X,O)\]
be a morphism realising an adjacency from $E_u$ to $\gamma$. As $W$ is smooth and $\rho_1\comp\varphi$ is transverse to $L$ at $Q'$
(Definition~\ref{branchadj},~(2),~(b)), there exists a scheme isomorphism
\[\beta:Spec(\KK[[t,s]])\to\widehat{(W,Q')}\]
(where $\widehat{(W,Q')}$ is the formal neighborhood of $Q'$ at $W$), 
such that the $\KK$-arc $\beta(t,0)$ coincides with $\rho_1\comp\varphi(t)$,
and $\beta(0,s)$ is a formal parametrisation of $(L,Q')$. The mapping
\[\alpha:=\psi\comp\rho_2\comp\beta:Spec(\KK[[t,s]])\to (X,O)\]
is a wedge since $\rho_2(0,s)=Q$ for any $s$. Its generic arc belongs to $N_{E_u}$ since $\tilde{\psi}(F_0)$ is contained in $E_u$, and its special arc
is $\psi\comp\rho_2\comp\beta(t,0)$. We have 
\[\psi\comp\rho_2\comp\beta(t,0)=\psi\comp\rho_2\comp\rho_1\comp\varphi(t)=\psi\comp\rho\comp\varphi=\gamma.\]
Thus $\alpha$ is a wedge realising an adjacency from $E_u$ to $\gamma$. 

If the morphism $\psi$ also avoids $\calZ$ we take as $\beta$ the morphism predicted in Definition~\ref{branchadj}.
This proves (1).

Let
\[\alpha:Spec(\KK[[t,s]])\to (X,O)\subset\KK^N\]
be an algebraic $\KK$-wedge realising an adjacency from $E_u$ to $\gamma$.

As $\alpha$ is algebraic, for any $i$, there exists a polynomial $G_i\in\KK[t,s,x_i]$ such that 
\[G_i(t,s,x_i(\alpha)(t,s))=0.\]
The polynomials $G_1,...,G_N$ define an algebraic set $Y_1$ in the affine space $\KK^2\times\KK^N$. Let 
\[\alpha':Spec(\KK[[t,s]]\to \KK^2\times\KK^N\]
be the graph of $\alpha$ (that is $\alpha'(t,s):=(t,s,\alpha(t,s)$). Let $Y_2$ be the ($2$-dimensional) irreducible component of $Y_1$ containing the image of the graph
of the mapping $\alpha$. Denote by $n:Y_3\to Y_2$ the normalisation. The mapping $\alpha'$ clearly admits a lifting 
\[\beta:Spec\KK[[t,s]])\to Y_3,\]
which is a formal isomorphism at the origin $O'$ of $Spec(\KK[[t,s]])$.
Thus, we may see $Spec(\KK[[t,s]])$ as the formal neighborhood of $Y_3$ at the smooth point $\beta(O')$.

Denote by $pr_2$ the projection of $\KK^2\times\KK^N$ to the second factor.
We consider normal projective completions $\overline{X}$ and $\overline{Y}_3$ of $X$ and $Y_3$ respectively such that the mapping 
\[pr_2|_{Y_2}\comp n:Y_3\to X\]
extends to a projective morphism
\[\phi:\overline{Y}_3\to \overline{X}.\]
By construction we have the equality
\begin{equation}
\label{comm1}
\phi\comp\beta=\alpha.
\end{equation}

Using Stein factorisation (Corollary~III.11.5~of~\cite{Ha}) we can factor $\phi$ into $\psi\comp\sigma$, where 
\[\sigma:\overline{Y}_3\to Z\]
is a proper birational morphism, the variety $Z$ is a normal projective surface, and 
\[\psi:Z\to\overline{X}\]
is a finite morphism.

Denote by $Q$ the point $\sigma(\beta(O'))$. The restriction 
\begin{equation}
\label{branchcov}
\psi:(Z,Q)\to (X,O)
\end{equation}
is a finite morphism of normal surface singularities. Let 
\[\rho:\tilde{Z}\to (Z,Q)\]
be the minimal resolution of singularities such that
\begin{enumerate}[(i)]
\item it resolves the indeterminacy of the rational map $\pi^{-1}\comp\psi$ (that is, the rational map $\tilde{\psi}:=\pi^{-1}\comp\psi\comp\rho$ is a
 morphism),
\item it dominates $\sigma$.
\end{enumerate}
Let 
\[\sigma':W\to \overline{Y}_3\]
be the minimal resolution of singularities of $\overline{Y}_3$. The composition $\rho_2:=\sigma\comp\sigma'$ is a resolution of singularities of $(Z,Q)$ and
the resolution $\rho$ factors into $\rho_2\comp\rho_1$, where $\rho_1$ is a the sequence of contractions of $(-1)$-curves. As $\overline{Y}_3$ is smooth
at the point $\beta(O')$ there is a unique point $Q'\in W$ such that $\sigma'(Q')=\beta(O')$, and the mapping $\beta$ admits an isomorphic lifting 
\begin{equation}
\label{lifting'}
\beta':Spec(\KK[[t,s]])\to (W,Q').
\end{equation}

Since we have the equalities
\begin{equation}
\label{igualdades}
\psi\comp\rho_2\comp\beta'=\psi\comp\sigma\comp\sigma'\comp\beta'=\psi\comp\sigma\comp\beta=\phi\comp\beta=\alpha
\end{equation}
and $\psi$ is finite we have that $\rho_2\comp\beta'$ transforms the $s$-axis into $Q$.
Thus the image $L$ of the $s$-axis by $\beta'$ belongs to the exceptional divisor of $\rho_2$.
Let $F'_0$ be the irreducible component of the exceptional divisor of $\rho_2$ containing the smooth branch $L$. Denote by $F_0$ the strict transform of
$F'_0$ by $\rho_1$. By construction $\rho_1$ does not collapse $F_0$. As $\alpha$ realises an adjacency from
$E_u$ to $\gamma$, it sends the special point of its general arc into $E_u$. This implies that the divisor $F_0$ is transformed into $E_u$ by $\tilde{\psi}$.
On the other hand the equalities~(\ref{igualdades}) imply that the arc $\varphi$ predicted in Definition~\ref{branchadj},~(2) is the lifting of the 
special arc of $\beta'$ to $\tilde{Z}$.

If the wedge $\alpha$ avoids $\calZ$ then the lifting $\beta'$ defined in~(\ref{lifting'}) is the additional morphism
predicted in Definition~\ref{branchadj}.
\end{proof}

\section{Moving wedges}
\label{mw}

In this section we assume that the base field $\KK$ is the field of complex numbers $\CC$ and we will work in the complex analytic category. Thus $(X,O)$ denotes a complex analytic normal surface singularity germ. Since $(X,O)$ has 
an isolated singularity at the origin, a result of Artin~\cite{Ar} shows that it is formally isomorphic with a germ of algebraic surface at a point. Then, by a
Theorem of Hironaka and Rossi~\cite{Hi},
we know that the formal isomorphism comes in fact from a local analytic one. Thus, when we need it we will freely assume that $(X,O)$ is an algebraic 
surface germ. It is worth to remind, for intuition, that in the category of normal complex analytic spaces, finite 
analytic morphisms correspond topologically to branched coverings.

We develope a technique that allows, given a $\CC$-wedge realising an adjacency from 
$E_u$ to a particular $\CC$-arc of $\dot{N}_{E_v}$, to construct wedges realising adjacencies from $E_u$ to {\em any} 
convergent $\CC$-arc in $\dot{N}_{E_v}$. The technique does not allow to ensure that if the original wedge avoids a
proper closed subset $\calZ$ of $\overline{N}_{E_u}$ then the newly constructed wedge also enjoys this property.
However, if the subset $\calZ$ equals the set of non-transverse arcs $\Delta_{E_u}$ we will prove that the property of
avoiding it is preserved.

\begin{definition}
Given any irreducible component $E_u$ of the exceptional divisor we define the proper closed subset 
$\Delta_{E_u}\subset N_{E_u}$ consisting of the union of the set of arcs whose lifting to the resolution sends
the special point to a singular point of $E$ with the set of arcs whose lifting is non-transverse to $E_u$. 
\end{definition}

We need to remind some common notations on dual graphs and the plumbing construction. Given a good resolution 
\[\pi:\tilde{X}\to (X,O)\]
of a normal surface singularity, the dual graph $\calG$ of the resolution is constructed as follows: there is a vertex for each irreducible component $E_i$ 
of $E$, with a genus weight (which is equal to the genus of $E_i$), and a self-intersection weight (which is equal to the self intersection of $E_i$in $\tilde{X}$). Two 
vertices are joined by one edge for each intersection point of their corresponding exceptional divisors. In this article a {\em weighted graph} is a doubly weighted graph
as above. Let $r$ be the number of vertices of a weighted graph $\calG$. The incidence matrix of $\calG$ is a symmetric square $r\times r$ matrix with the following
entries: enumerate the vertices of $\calG$, the $(i,i)$-entry is the self-intersection weight of the $i$-th vertex; if $i\neq j$ the $(i,j)$ entry is the number of edges between the $i$-th and $j$-th vertices.
It is well known~\cite{Gr} that a weighted graph is associated to a good resolution of a normal surface singularity if and only
if its incidence matrix is negative-definite. In that case we say that the graph is a negative definite weighted graph. Another well known fact is that an adequate
neighborhood of $E$ in $\tilde{X}$ is is diffeomorphic to the plumbing $4$-manifold associated to the dual graph of the resolution~\cite{Ne}.

The main technical difficulty is solved in:

\begin{prop}
\label{movinbc}
If there exists a convergent $\CC$-arc $\gamma\in\dot{N}_{E_v}$ and a finite analytic mapping realising and adjacency
from $E_u$ to $\gamma$ (avoiding $\Delta_{E_u}$), then, for any other convergent $\CC$-arc $\gamma'\in\dot{N}_{E_v}$
there exists a finite analytic mapping realising an adjacency from $E_u$ to $\gamma'$ (avoiding $\Delta_{E_u}$).
\end{prop}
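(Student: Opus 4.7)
The plan is to combine a topological isotopy with a controlled change of complex structures to transport the given finite analytic mapping $\psi:(Z,Q)\to(X,O)$ from the special arc $\gamma$ to the special arc $\gamma'$.

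First, I lift the convergent arcs $\gamma,\gamma'$ to $\tilde\gamma,\tilde\gamma'$ in $\tilde X$, with special points $p,p'\in E_v\setminus\Sing(E)$. Since $E_v\setminus\Sing(E)$ is a connected (non-compact) Riemann surface, I choose an embedded smooth path $\lambda\subset E_v\setminus\Sing(E)$ from $p$ to $p'$, together with a tubular neighborhood $T$ of $\lambda$ in $\tilde X$ that is disjoint from $\Sing(E)$, from the other exceptional components, and from the image under $\tilde\psi$ of any locus we wish to avoid (in particular from the points of $F_0$ over which $\tilde\psi$ fails to branch transversely along $E_u$, so as to preserve the avoidance of $\Delta_{E_u}$).

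Second, I construct a diffeomorphism $\Phi:\tilde X\to\tilde X$ supported in $T$ with the following properties: (i) $\Phi$ carries the germ of $\tilde\gamma$ at $p$ to the germ of $\tilde\gamma'$ at $p'$; (ii) $\Phi$ restricted to $E_v\cap T$ is a biholomorphism carrying $p$ to $p'$. This is possible because the local biholomorphism pseudogroup of a tubular neighborhood of a Riemann surface germ in a complex surface acts transitively on pairs consisting of a base point on the surface and a transverse analytic disk through it.

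Third, which is the heart of the argument, I perform the change of complex structure: set $J':=\Phi_*J$ on $T$ and $J'=J$ off $T$. Because $\Phi$ is biholomorphic along $E_v\cap T$ and the identity near $\partial T$, the structure $J'$ glues to a complex structure on $\tilde X$ that agrees with the original one on $E$ and has the same weighted dual graph. By Grauert's contraction theorem combined with the rigidity of a neighborhood of $E$ inside $\tilde X$ (encoded in the negative-definite graph $\calG$), $(\tilde X,J')$ contracts to a normal surface germ, and by choosing $\Phi$ carefully within its isotopy class one arranges that this germ is analytically isomorphic to $(X,O)$ itself, via an isomorphism that equals $\Phi$ on the resolution near $E$.

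Finally, I lift this change to $\tilde Z$: since $\tilde\psi$ is analytic and $T$ avoids the critical values of $\tilde\psi$ that we wish to control, pulling back $J'$ via $\tilde\psi$ on $\tilde\psi^{-1}(T)$ and gluing with the original structure on the rest of $\tilde Z$ yields a new complex structure; Grauert contraction then produces $\psi':(Z',Q')\to(X,O)$. The transported arc $\varphi':=\Phi\circ\varphi$ and the unchanged component $F_0$ verify all conditions of Definition~\ref{branchadj} for the arc $\gamma'$, and transversality of $\tilde\psi'$ along $E_u$ is preserved because $\Phi$ is a diffeomorphism supported outside the branching locus, so the avoidance of $\Delta_{E_u}$ persists. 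The main obstacle is the third step: ensuring that the change of complex structure produces the \emph{same} singularity $(X,O)$ and not a nontrivial deformation, which is where the flexibility of complex structures supported on a tubular neighborhood of a smooth curve in $\tilde X$, together with Laufer-type rigidity of $E\subset\tilde X$, must be used quantitatively.
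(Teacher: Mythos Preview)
Your overall strategy---transport $\psi$ along a self-diffeomorphism of $\tilde X$ taking $\tilde\gamma$ to $\tilde\gamma'$ and then adjust complex structures---is the right picture, but step~3 contains a genuine confusion that makes steps~3--4 collapse. Since $\Phi$ is compactly supported in $T$, the definition $J':=\Phi_*J$ on $T$, $J'=J$ off $T$ simply gives $J'=\Phi_*J$ globally, and $\Phi:(\tilde X,J)\to(\tilde X,J')$ is then a \emph{tautological} biholomorphism; no ``Laufer-type rigidity'' is involved. When you then identify the contraction of $(\tilde X,J')$ with $(X,O)$ via this isomorphism and compose with $\tilde\psi:(\tilde Z,J'_Z)\to(\tilde X,J')$, the map you obtain to $(\tilde X,J)$ is $\Phi^{-1}\circ\tilde\psi$, not $\Phi\circ\tilde\psi$; so the special arc lands at $\Phi^{-1}(\tilde\gamma)$, not at $\tilde\gamma'$. (Incidentally, condition~(ii) is impossible as stated: a holomorphic self-map of a disk that is the identity near the boundary cannot move $p$ to $p'\neq p$.) Separately, ``pulling back $J'$ via $\tilde\psi$'' is only defined where $\tilde\psi$ is a local diffeomorphism, and your tube $T$ necessarily meets $E_v$, which lies in the branch locus of the finite map obtained from $\tilde\psi$ by Stein factorisation; you give no mechanism to extend the structure across the ramification.

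The paper avoids all of this by \emph{never} altering the complex structure on $\tilde X$. One passes to the Stein factorisation $\tilde\psi':Z'\to\tilde X$ (a finite branched cover), constructs a diffeomorphism $\eta$ of $\tilde X$ that is the identity outside a plumbing piece over $E_v$, is \emph{biholomorphic on neighbourhoods of the singular points of $E$ and of the non-exceptional branch locus $\Delta'$}, and swaps $\tilde\gamma$ with $\tilde\gamma'$. Then $\eta\circ\tilde\psi':Z'\to\tilde X$ is a topological branched cover whose branch locus $\eta(\Delta)=E\cup\eta(\Delta')$ is still analytic, so the Grauert--Remmert extension theorem produces the unique normal complex structure on $Z'$ making $\eta\circ\tilde\psi'$ holomorphic into the \emph{original} $(\tilde X,J)$; one then propagates this to $\tilde Z$ and contracts. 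The missing ingredients in your proposal are exactly this: keep the target fixed, use Grauert--Remmert on the source, and design the diffeomorphism so that the branch locus remains analytic.
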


The result which allows to move wedges is:

\begin{prop}
\label{movinw}
If there exists a $\CC$-arc $\gamma\in\dot{N}_{E_v}$ and a $\CC$-wedge realising and adjacency from $E_u$ to $\gamma$
(avoiding $\Delta_{E_u}$), then, for any convergent $\CC$-arc $\gamma'\in\dot{N}_{E_v}$ there exists a $\CC$-wedge
realising an adjacency from $E_u$ to $\gamma'$ (avoiding $\Delta_{E_u}$).
\end{prop}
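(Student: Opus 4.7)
The plan is to deduce Proposition~\ref{movinw} from Proposition~\ref{movinbc} by a detour through algebraic wedges and finite coverings. Given a $\CC$-wedge $\alpha$ realising an adjacency from $E_u$ to $\gamma$ avoiding $\Delta_{E_u}$, I would first approximate $\alpha$ by an algebraic wedge, then promote that algebraic wedge to a finite analytic morphism by Proposition~\ref{wedcov}(2), then move the finite morphism to the prescribed transverse arc $\gamma'$ by Proposition~\ref{movinbc}, and finally extract the required $\CC$-wedge by the construction of Proposition~\ref{wedcov}(1).

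For the first step, I would invoke Proposition~\ref{approximation} with a truncation order $L$ chosen to exceed every finite jet bound that appears in its proof, namely $m(\alpha_s)$, $m(\alpha_g)$, $R(\alpha)$, $n_0$, and the integers $l_1,l_2$ associated in~\ref{wedgesavoiding} to the ideal of $\Delta_{E_u}$. This yields an algebraic wedge $\beta$ coinciding with $\alpha$ through order $L$, realising an adjacency from $E_u$ to $E_v$ and avoiding $\Delta_{E_u}$. Since belonging to $\dot N_{E_v}$, having generic arc in $N_{E_u}$, and avoiding $\Delta_{E_u}$ are all controlled by finite jet conditions (Proposition~\ref{nivelarcos} and~\ref{wedgesavoiding}), the special arc $\gamma_1 := \beta(t,0)$ is an algebraic arc in $\dot N_{E_v}$; hence $\beta$ realises an adjacency from $E_u$ to the specific algebraic arc $\gamma_1$, avoiding $\Delta_{E_u}$.

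Next, Proposition~\ref{wedcov}(2) converts $\beta$ into a finite algebraic morphism $\psi:(Z,Q)\to (X,O)$ realising an adjacency from $E_u$ to $\gamma_1$ avoiding $\Delta_{E_u}$. Passing to analytifications (as permitted by the theorems of Artin and Hironaka--Rossi recalled at the start of Section~\ref{mw}) yields a finite analytic mapping germ realising, in the sense of Definition~\ref{branchadjC}, an adjacency from $E_u$ to the convergent arc $\gamma_1$, avoiding $\Delta_{E_u}$. Proposition~\ref{movinbc} now applies to transfer the adjacency to the prescribed convergent arc $\gamma'\in\dot N_{E_v}$: it provides a finite analytic mapping $\psi':(Z',Q')\to(X,O)$ realising an adjacency from $E_u$ to $\gamma'$ avoiding $\Delta_{E_u}$.

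To finish, I would repeat the construction in the proof of Proposition~\ref{wedcov}(1), which is formal in nature and transfers verbatim to the analytic category: pick a formal (hence analytic, at the smooth point $Q'$ of the smooth resolution model $W$) parametrisation $\beta'$ of $\widehat{(W,Q')}$ with $\beta'(t,0)$ equal to the lifted special arc and $\beta'(0,s)$ parametrising the smooth branch $L$ of the exceptional component $F_0'$ dominating $E_u$; then $\alpha' := \psi'\circ\rho_2\circ\beta'$ is a $\CC$-wedge realising the required adjacency from $E_u$ to $\gamma'$ avoiding $\Delta_{E_u}$. The genuine obstacle in the whole argument sits inside Proposition~\ref{movinbc}, which does the real topological work of moving between transverse arcs; the wedge / algebraic wedge / finite cover round trip described above is only the bookkeeping needed to reach its hypothesis and then to extract a wedge from its conclusion.
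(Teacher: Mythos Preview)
Your proposal is correct and follows essentially the same route as the paper's own proof: approximate the given wedge by an algebraic one via Proposition~\ref{approximation}, pass to a finite morphism via Proposition~\ref{wedcov}(2), move it to $\gamma'$ via Proposition~\ref{movinbc}, and recover a wedge via Proposition~\ref{wedcov}(1). Your write-up is in fact more careful than the paper's in two places: you make explicit that the special arc changes from $\gamma$ to an algebraic $\gamma_1$ after approximation (the paper calls it $\gamma''$), and you note that the construction of Proposition~\ref{wedcov}(1) must be read in the analytic category when applied to the output of Proposition~\ref{movinbc}, which the paper leaves implicit.
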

\begin{proof}
Let us assume Proposition~\ref{movinbc}. By Proposition~\ref{approximation} there exists an algebraic $\CC$-wedge
$\alpha$ realising an adjacency from 
$E_u$ to $E_v$ (avoiding $\Delta_{E_u}$) . If $\gamma''$ denotes the special arc of $\alpha$ then $\alpha$ realises an adjacency from $E_u$ to $\gamma''$ (avoiding $\Delta_{E_u}$). By 
Proposition~\ref{wedcov}~(2) there exists a finite morphism realising an adjacency from $E_u$ to $\gamma''$ (avoiding $\Delta_{E_u}$). By Proposition~\ref{movinbc}
for any convergent $\gamma'\in\dot{N}_{E_v}$ there exists a finite morphism realising an adjacency from $E_u$ to $\gamma'$ (avoiding $\Delta_{E_u}$) . Finally, by Proposition~\ref{wedcov}~(1) for any 
convergent $\gamma'\in\dot{N}_{E_v}$ there exists a $\CC$-wedge realising an adjacency from $E_u$ to $\gamma'$ (avoiding $\Delta_{E_u}$).
\end{proof}

\begin{proof}[Proof of Proposition~\ref{movinbc}]
The proof is somewhat involved but the idea is simple: let
\[\psi:(Z,Q)\to (X,O)\]
be a finite analytic morphism of normal surface singularities realising an adjacency from $E_u$ to $\gamma$. Since 
$\gamma$ and $\gamma'$ are arcs with transverse lifting through the same essential component there exists a 
self-homeomorphism $\Phi$ from of $(X,O)$ transforming the arc $\gamma$ into the arc $\gamma'$. The composition of 
$\Phi\comp\psi$ is a topological branched cover. Since the conditions of the definition of
finite analytic morphisms realising an adjacency are of a topological nature, if one could put another analytic
structure on $(Z,Q)$ so that $\Phi\comp\psi$ is holomorphic, then $\Phi\comp\psi$ would be a finite analytic mapping
realising an adjacency from $E_u$ to $\gamma'$. 

The proof has three parts, the first is a construction of the self-homeomorphism 
which allows to change the complex structure in $(Z,Q)$ so that the composition becomes holomorphic, 
the second consists in constructing the new complex structure, and the third consists in checking that our 
construction gives, in fact, a finite analytic mapping realising an adjacency from $E_u$ to $\gamma'$.\\

\noindent{\bf PART I}.

We will construct the homeomorphism at the level of the resolution $\tilde{X}$.

Let
\[\psi:(Z,Q)\to (X,O)\]
be a finite analytic morphism of normal surface singularities realising an adjacency from $E_u$ to $\gamma$. Let 
\[\rho:\tilde{Z}\to (Z,Q)\]
be the resolution of singularities predicted in Definition~\ref{branchadj}, and
$\tilde{\psi}:\tilde{Z}\to\tilde{X}$ the lifting of $\psi$. By Stein factorisation $\tilde{\psi}$ factors into $\tilde{\psi}'\comp\rho'$, where
\[\tilde{\psi}':Z'\to\tilde{X}\]
is a finite analytic mapping, with $Z'$ a normal surface, and 
\[\rho':\tilde{Z}\to Z'\]
a resolution of singularities. Moreover the resolution $\rho$ factors into $\rho''\comp\rho'$, where
\[\rho'':Z'\to (Z,Q)\]
is a bimeromorphic morphism.

Let $\Delta$ be the branching locus of $\tilde{\psi}'$, and $\Delta'$ be the union of the irreducible components of $\Delta$ not contained in the exceptional
divisor $E$. Let $\tilde{\gamma}$ and $\tilde{\gamma}'$ be liftings of $\gamma$ and $\gamma'$ to $\tilde{X}$.
Since $\tilde{X}$ is diffeomorphic to the plumbing $4$-manifolds of the dual graph of the resolution, possibly having to shrink it to a smaller tubular neighborhood 
of the exceptional divisor of $\pi$ we may assume:
\begin{enumerate}
\item There are neighborhoods $U_1,...,U_m$ of each singular point of the exceptional divisor in $\tilde{X}$ which
contain all the components of $\Delta'$ which meet the corresponding singular point.
\item The difference $\tilde{X}\setminus \cup_{j=1}^mU_j$ splits in one connected component $\tilde{X}_v$ for each
irreducible component $E_v$ of $E$. 
In addition, for any $v$ we have that $\hat{E}_v:=E_v\setminus\cup_{j=1}^mU_j$ is the
deletion of several discs in $E_v$, and we have a smooth
product structure $\tilde{X}_v=\hat{E}_v\times D$, where $D$ is a disk.
\item Consider the set $\{p_1,...,p_s\}$ of meeting points of $\Delta'$ with $E$ which are not singularities of $E$. 
Given any point $p_i$ contained in a divisor $E_u$, there exists a disk $D_i$ around 
$p_i$ in $\hat{E_u}$ such that all the components of $\Delta'$ meeting $p_i$ are contained in $D_i\times D$, and any 
component of $\Delta'$ meeting $D_i\times D$ meet $E$ at $p_i$. In addition the closure of the discs $D_1$,...,$D_s$
are two-by-two disjoint and do not meet the boundary of $\hat{E}_v$.
\item The images of $\tilde{\gamma}$ and $\tilde{\gamma}'$ are fibre disks in $\tilde{X}_v$ by the projection to 
$\hat{E}_v$. Either there is a disk
$D_{\tilde{\gamma}}$ in $\hat{E}_v$ around $\tilde{\gamma}(0)$ with closure disjoint to the closure of any $D_i$ 
and to the boundary of $\hat{E}_v$, or $\tilde{\gamma}$ is the fibre of a point in $\{p_1,...,p_s\}$ by the projection 
to $\hat{E}_v$. In the later case we define the disk $D_{\tilde{\gamma}}$ to be the equal to $D_j$, for $\tilde{\gamma}(0)=p_j$.
The same holds for $\tilde{\gamma}'$. The closures 
of the two disks $D_{\tilde{\gamma}}$ and $D_{\tilde{\gamma}'}$ are disjoint and disjoint to the boundary of 
$\hat{E}_v$.
\end{enumerate}

Denote by $\calD$ an open region of $\hat{E}_v$ diffeomorphic to a disk such that the closure of the disks $D_{\tilde{\gamma}}$ and $D_{\tilde{\gamma}'}$
are contained in $\calD$ and the closure of $\calD$ is disjoint to the boundary of $\hat{E}_v$ and to the closures of
the disks $D_i$ different from $D_{\tilde{\gamma}}$ and $D_{\tilde{\gamma}'}$.

The construction of the homeomorphism is achieved in the following lemma.
\begin{lema}
\label{difeolr}
There exists a diffeomorphism 
$$\eta:\tilde{X}\to\tilde{X}$$
such that
\begin{enumerate}[(i)]
\item it leaves $E$ invariant,
\item the restriction of $\eta$ to $\tilde{X}\setminus(\calD\times D)$ is the identity,
\item the restrictions
\[\eta_1:D_{\tilde{\gamma}}\times D\to D_{\tilde{\gamma}'}\times D\]
\[\eta_2:D_{\tilde{\gamma}'}\times D\to D_{\tilde{\gamma}}\times D\]
are biholomorphisms such that we have the equalities $\eta\comp\tilde{\gamma}=\tilde{\gamma}'$ and 
$\eta\comp\tilde{\gamma}'=\tilde{\gamma}$.
\end{enumerate}
\end{lema}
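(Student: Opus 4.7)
\emph{Proof plan.}
The plan is to build $\eta$ as $\eta_0 \times \mathrm{id}_D$ on the product tube $\tilde X_v = \hat E_v \times D$, extended by the identity to $\tilde X \setminus \tilde X_v$. By condition~(4) of the setup the arcs $\tilde\gamma$ and $\tilde\gamma'$ are already exact fibres $\{\tilde\gamma(0)\}\times D$ and $\{\tilde\gamma'(0)\}\times D$ of the projection $\tilde X_v \to \hat E_v$, so the identities $\eta\circ\tilde\gamma=\tilde\gamma'$ and $\eta\circ\tilde\gamma'=\tilde\gamma$ follow as soon as $\eta_0$ interchanges the two marked points $\tilde\gamma(0)$ and $\tilde\gamma'(0)$. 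The problem is thereby reduced to constructing a self-diffeomorphism $\eta_0$ of $\hat E_v$, compactly supported in the open disk $\calD$, whose restrictions $\eta_0|_{D_{\tilde\gamma}} : D_{\tilde\gamma} \to D_{\tilde\gamma'}$ and $\eta_0|_{D_{\tilde\gamma'}} : D_{\tilde\gamma'} \to D_{\tilde\gamma}$ are mutually inverse biholomorphisms swapping $\tilde\gamma(0)$ with $\tilde\gamma'(0)$.

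I would construct $\eta_0$ by a \emph{half-twist} in $\calD$. Since $\calD$ is diffeomorphic to a real $2$-disk, hence simply connected, there is a smooth compactly supported isotopy of $\calD$ starting at the identity and dragging the marked point $\tilde\gamma(0)$ along a half-loop around $\tilde\gamma'(0)$; the time-one map $\tau\colon\calD\to\calD$ is an orientation-preserving diffeomorphism, identity near $\partial\calD$, which swaps $\overline{D_{\tilde\gamma}}$ and $\overline{D_{\tilde\gamma'}}$ and exchanges $\tilde\gamma(0)$ with $\tilde\gamma'(0)$. In general $\tau$ is only smooth (not biholomorphic) on the two small disks. To correct this I would post-compose $\tau$ with a diffeomorphism of $\calD$ supported in a slightly larger neighbourhood of $\overline{D_{\tilde\gamma}}\cup\overline{D_{\tilde\gamma'}}$ which replaces $\tau|_{D_{\tilde\gamma}}$ and $\tau|_{D_{\tilde\gamma'}}$ by biholomorphisms $\varphi\colon D_{\tilde\gamma}\to D_{\tilde\gamma'}$ and $\varphi^{-1}\colon D_{\tilde\gamma'}\to D_{\tilde\gamma}$ sending marked point to marked point. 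Such a $\varphi$ exists by the Riemann mapping theorem combined with the $3$-transitivity of the Möbius group, and the correction is possible because the space of orientation-preserving diffeomorphisms between two disks with a prescribed marked-point image, identity near the boundary, is connected.

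Once $\eta_0$ is in hand, put $\eta := \eta_0\times\mathrm{id}_D$ on $\tilde X_v$ and extend by the identity to $\tilde X\setminus\tilde X_v$; smoothness on the overlap follows because $\eta_0$ is the identity near $\partial\calD$, and $\calD\times D$ is disjoint from the other product tubes $\tilde X_w$ and from the neighbourhoods $U_j$ of singularities of $E$ by condition~(3) of the setup. Property~(i) holds because $\eta$ preserves the zero section $\hat E_v\times\{0\}=E\cap\tilde X_v$ and is the identity on $E\setminus\tilde X_v$; property~(ii) follows from $\eta_0$ being supported in $\calD$; property~(iii) is immediate from the biholomorphic restrictions constructed above, together with the identification of $\tilde\gamma,\tilde\gamma'$ with fibres.

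The main obstacle I expect to face is the reconciliation of the purely topological swap (which forces $\eta_0$ to be non-holomorphic somewhere in $\calD$) with the requirement that the two restrictions to $D_{\tilde\gamma}$ and $D_{\tilde\gamma'}$ be genuinely biholomorphic. The two-step strategy above — first realise the swap topologically via a braid-type half-twist, then correct to biholomorphic using contractibility of the diffeomorphism group of a disk rel boundary — is precisely what handles this obstacle.
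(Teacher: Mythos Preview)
Your overall strategy---swap the two small disks by a compactly supported diffeomorphism of $\calD$ and extend by the identity---is the same as the paper's. The paper also first fixes a biholomorphism $\xi:\overline{D_{\tilde\gamma}}\to\overline{D_{\tilde\gamma'}}$ carrying $\tilde\gamma(0)$ to $\tilde\gamma'(0)$, extends it to a diffeomorphism $\overline{\xi}$ of $\calD$ equal to $\xi$, $\xi^{-1}$, and the identity on $D_{\tilde\gamma}$, $D_{\tilde\gamma'}$, and near $\partial\calD$ respectively, and sets $\eta=\mathrm{id}$ outside $\calD\times D$.

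There is, however, one genuine gap. You take $\eta=\eta_0\times\mathrm{id}_D$ and argue that, because the \emph{images} of $\tilde\gamma$ and $\tilde\gamma'$ are fibre disks, the equalities $\eta\circ\tilde\gamma=\tilde\gamma'$ and $\eta\circ\tilde\gamma'=\tilde\gamma$ follow automatically once $\eta_0$ swaps the base points. Condition~(4) of the setup only says the images are fibres; it says nothing about the parametrisations. In general $\tilde\gamma(t)=(\tilde\gamma(0),f(t))$ and $\tilde\gamma'(t)=(\tilde\gamma'(0),g(t))$ for two different local biholomorphisms $f,g$ of $(D,0)$, so $(\eta_0\times\mathrm{id}_D)\circ\tilde\gamma=(\tilde\gamma'(0),f(t))$ is only a reparametrisation of $\tilde\gamma'$, not $\tilde\gamma'$ itself. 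The lemma demands equality of arcs, and this matters downstream because one needs $\nu\circ\psi\circ\rho\circ\varphi=\gamma'$ on the nose.

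The paper fixes exactly this point: after composing with $(\xi,\mathrm{id})$ one observes that there is a biholomorphism $\zeta$ of $D$ with $(\mathrm{id},\zeta)\circ(\xi,\mathrm{id})\circ\tilde\gamma=\tilde\gamma'$, and then replaces $\mathrm{id}_D$ by a smooth family $\Psi_x$ of diffeomorphisms of $D$, parametrised by $x\in\calD$, equal to $\zeta$ on $D_{\tilde\gamma}$, to $\zeta^{-1}$ on $D_{\tilde\gamma'}$, and to the identity near $\partial\calD$. The final map is $\eta(x,y)=(\overline{\xi}(x),\Psi_x(y))$. Your construction becomes correct once you insert this fibrewise correction; without it, property~(iii) fails as stated.
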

\begin{proof}
The diffeomorphism can be constructed as follows: define first a biholomorphism 
\[\xi:\overline{D_{\tilde{\gamma}}}\to \overline{D_{\tilde{\gamma}'}}\]
taking $\tilde{\gamma}(0)$ to $\tilde{\gamma}'(0)$. Now define
\[\eta':D_{\tilde{\gamma}}\times D\to D_{\tilde{\gamma}'}\times D\]
by $\eta'(x,y):=(\xi(x),y)$. The composition $\eta'\comp\tilde{\gamma}$ is a re-parametrisation of $\tilde{\gamma}'$: in
fact there exists a biholomorphic mapping $\zeta$ such that we have 
$(id,\zeta)\comp\eta'\comp\tilde{\gamma}=\tilde{\gamma}'$.
Let $\overline{\xi}:\calD\to\calD$ be any diffeomorphism whose restrictions to $\overline{D_{\tilde{\gamma}}}$ and 
$\overline{D_{\tilde{\gamma}'}}$ coincides with $\xi$ and $\xi^{-1}$ respectively, and which is the identity in a 
neighborhood of the boundary of $\calD$. Consider a family of diffeomorphisms $\Psi_x$ of $D$ 
parametrised over $\calD$ such that
\begin{itemize}
\item if $x\in D_{\tilde{\gamma}}$ then $\Psi_x$ coincides with $\zeta$,
\item if $x\in D_{\tilde{\gamma}'}$ then $\Psi_x$ coincides with $\zeta^{-1}$, 
\item if $x$ belongs to a certain neighborhood of the boundary of $\calD$ then $\Psi_x$ is the identity.
\end{itemize}
We define
\[\eta(x,y):=(\overline{\xi}(x),\Psi_x(y)),\]
if $(x,y)\in\calD\times D$, and $\eta(p)=p$ if $p$ is outside $\calD\times D$.
\end{proof}

The Lemma defines the homeomorphism at the level of the resolution. Since it leaves $E$ invariant it may be push down
to a self-homeomorphism of 
$$\nu:(X,O)\to (X,O).$$

\noindent{\bf PART II.}

In this part we define a new holomorphic structure in $(Z,Q)$ which makes the composition
$$\nu\comp\pi:(Z,Q)\to (X,O)$$
an analytic morphism. Since in the process we need to deal often with the same topological space with different 
complex structures we will denote an analytic set by a pair
$(Y,\calB)$, where $Y$ denotes the underlying topological space and $\calB$ the complex structure.

The original complex structure of the surface germ $(Z,Q)$ is denoted by $\calD_1$. 
\begin{lema}
\label{newcompstr}
There exists a unique complex structure $\calD_2$ in the germ of topological space $(Z,Q)$ such that
$$\nu\comp\psi:(Z,Q,\calD_2)\to (X,O)$$
is a finite analytic morphism
\end{lema}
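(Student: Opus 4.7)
The strategy is to transport the complex structure of $(X,O)$ back to $Z$ via the composition $\nu\comp\psi$, which is topologically a finite branched covering since $\psi$ is a finite analytic morphism of normal surface germs and $\nu$ is a homeomorphism. The new structure $\calD_2$ will be characterised by the requirement that $\nu\comp\psi$ becomes a local biholomorphism off of a thin set, and it will then extend analytically across that thin set by a Grauert--Remmert type extension theorem.

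Let $B\subset Z$ denote the critical locus of $\psi$ with respect to $\calD_1$; it is a nowhere dense analytic subset of $(Z,\calD_1)$. Off $B$ the map $\psi$ is a local biholomorphism, so $\nu\comp\psi$ is a finite local homeomorphism from $Z\setminus B$ onto $X\setminus \nu(\psi(B))$. On $Z\setminus B$ I would define $\calD_2$ by declaring a germ $f$ at $q$ to be $\calD_2$-holomorphic iff $f=g\comp\nu\comp\psi$ locally for some $\calO_X$-holomorphic germ $g$ at $\nu(\psi(q))$; by construction $\nu\comp\psi$ then becomes a local biholomorphism with respect to $\calD_2$ on $Z\setminus B$. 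Observe that on the open subset $Z_0\subset Z\setminus B$ of points at which $\nu$ is locally biholomorphic near $\psi(q)$, the structures $\calD_2$ and $\calD_1$ coincide; by Lemma~\ref{difeolr} the complement $(Z\setminus B)\setminus Z_0$ is confined to a relatively compact region of $Z$ whose image lies in a smooth part of $X$ disjoint from the singular locus.

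To extend $\calD_2$ across $B$, I would invoke the Riemann extension theorem for finite branched coverings of normal complex analytic spaces (Grauert--Remmert): given a continuous proper surjective map $f:W\to Y$ with finite fibres from a Hausdorff space onto a normal analytic space, which is étale onto the complement of a nowhere dense analytic subset and such that the étale part carries a normal analytic structure making $f$ locally biholomorphic, there exists a unique normal complex analytic structure on $W$ extending the given one such that $f$ becomes a finite analytic morphism. Applied with $W=Z$, $Y=X$, $f=\nu\comp\psi$ and the structure already built on $Z\setminus B$, this produces $\calD_2$ on all of $Z$. Uniqueness is automatic: two normal analytic structures on $Z$ making $\nu\comp\psi$ a finite analytic morphism must agree on $Z\setminus B$ because the local biholomorphism condition pins the structure down there, and by normality they then agree globally.

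The main obstacle I anticipate is that the topological branch locus $\nu(\psi(B))\subset X$ need not be analytic, so the extension theorem cannot be applied by taking it as the ``branch set'' in the target. The way around this is to argue locally: near points of $B$ where $\nu$ is holomorphic, the new structure coincides with $\calD_1$ and the extension across $B$ is built in; near the remaining points of $B$, one works in a smooth region of $X$ away from $O$, where the fibres of $\psi$ admit explicit local analytic descriptions, and these pull back through the biholomorphic identifications on $V_1,V_2$ and the smooth deformation elsewhere to give an analytic chart for $\calD_2$ across $B$ by direct inspection.
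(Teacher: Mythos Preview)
Your approach has a genuine gap at the point $Q$. You propose a dichotomy for points $q\in B$: either $\nu$ is holomorphic near $\psi(q)$, in which case $\calD_2=\calD_1$ there, or else $\psi(q)$ lies in a smooth region of $X$ away from $O$, where you argue by hand. But $Q\in B$ (the map $\psi$ is certainly ramified over the singular point), $\psi(Q)=O$ is singular, and $\nu$ is \emph{not} holomorphic in any neighbourhood of $O$: the diffeomorphism $\eta$ of Lemma~\ref{difeolr} is non-holomorphic on the region $\calD\times D$, which contains an open piece of $E_v$ and therefore collapses under $\pi$ to a neighbourhood of $O$ in $X$. So neither branch of your dichotomy covers $Q$, and your ``direct inspection'' paragraph (which also refers to undefined objects $V_1,V_2$) does not explain how $\calD_2$ is to be constructed there.

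The paper circumvents this by never working with $\nu$ on the singular target $X$. Instead it lifts everything to the resolution: it takes the Stein factorisation $\tilde{\psi}=\tilde{\psi}'\comp\rho'$ with $\tilde{\psi}':Z'\to\tilde{X}$ finite, and observes that the whole point of the setup in Part~I was to arrange that $\eta$ is biholomorphic in a neighbourhood of the branching locus $\Delta$ of $\tilde{\psi}'$ (the components of $\Delta'$ sit inside the polydiscs $D_i\times D$ and the $U_j$, all disjoint from the non-holomorphic region of $\eta$). Hence $\eta(\Delta)\subset\tilde{X}$ \emph{is} analytic, the target $\tilde{X}$ is smooth, and Grauert--Remmert applies cleanly to produce a new normal structure $\calA_2$ on $Z'$ making $\eta\comp\tilde{\psi}'$ holomorphic. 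The structure is then pulled up through the chain of point blow-ups to $\tilde{Z}$, and finally pushed down to $(Z,Q)$ by Stein-factorising $\pi\comp\eta\comp\tilde{\psi}'\comp\rho'$. In short, the ``obstacle'' you anticipated is real on $X$ but disappears on $\tilde{X}$, and that is precisely why the paper carries out the construction upstairs.
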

\begin{proof} The new complex structure is constructed first at the level of the resolution of $(Z,Q)$ and pushed down later.
The finiteness of $\nu\comp\psi$ is a topological property and holds because $\psi$ is finite.

\textsc{Step 1}. The mapping
\begin{equation}
\label{branchcomp}
\eta\comp\tilde{\psi}':Z'\to\tilde{X}
\end{equation}
is a topological branched covering, which is a local diffeomorphism outside $\eta(\Delta)$, but it is not holomorphic.
We are going to
change the analytic structure in $Z'$ so the mapping $\eta\comp\tilde{\psi}'$ becomes holomorphic.
We endow $Z'\setminus (\tilde{\psi}')^{-1}(\Delta)$ with the unique
complex structure making the restriction $\eta\comp\tilde{\psi}'|_{Z'\setminus (\tilde{\psi}')^{-1}(\Delta)}$
holomorphic. This
restriction becomes automatically a finite and \'etale analytic morphism with the new complex structure.
Notice that by construction of $\eta$ the branch locus $\eta(\Delta)$ of $\eta\comp\tilde{\psi}'$ is an analytic subset.
Thus, by a Theorem of Grauert and Remmert~\cite{GrR}, there exists a unique normal
complex analytic set $Z''$ containing $Z'\setminus (\tilde{\psi}')^{-1}(\Delta)$ for which 
$\eta\comp\tilde{\psi}'|_{Z'\setminus (\tilde{\psi}')^{-1}(\Delta)}$ extends to a finite analytic mapping
\begin{equation}
\label{newbranch}
\tilde{\psi}'':Z''\to\tilde{X}
\end{equation}
Since a branched covering is determined as a topological space by its restriction over the complement of the branching
locus we have that $Z'$ and $Z''$ coincide as topological spaces and that $\tilde{\psi}''$ equals
$\eta\comp\tilde{\psi}'$. Let $\calA_1$ denote the original complex structure of $Z'$ and $\calA_2$ denote the complex structure
of $Z''$. We will denote $Z'$ by $(Z',\calA_1)$ and $Z''$ by $(Z',\calA_2)$.

\textsc{Step 2}. The singularities of $(Z',\calA_1)$ are over the singular points of the discriminant. 
Since these points are mapped by $\tilde{\psi}'$ to points where the mapping $\eta$ is biholomorphic, the singularities
of $(Z',\calA_2)$ are analytically equivalent to those of $(Z',\calA_1)$. Let
\[\rho_{min}:(Z_{min},\calB_1)\to (Z',\calA_1)\]
be the minimal resolution of singularities (we denote by $\calB_1$ the complex structure of $Z_{min}$).
If we change the analytic structure $\calA_1$ by $\calA_2$ then the mapping
$\rho_{min}$ is holomorphic over neighborhoods of the singularities of $(Z',\calA_2)$, and is a non-necessarily holomorphic diffeomorphism outside these neighborhoods. Thus there is a unique complex structure $\calB_2$ in 
$Z_{min}$ making 
\[\rho_{min}:(Z_{min},\calB_2)\to (Z',\calA_2)\] 
holomorphic.

\textsc{Step 3}.
Let $\calC_1$ denote the complex structure of the resolution $\tilde{Z}$ of $(Z',\calA_1)$. The mapping
\[\rho':(\tilde{Z},\calC_1)\to (Z',\calA_1)\]
can be factored into $\rho_{min}\comp\rho'''$, where $\rho''':\tilde{Z}\to Z_{min}$ is a chain of blow ups at points.
We claim that there is a unique complex structure $\calC_2$ on $\tilde{Z}$ which makes 
\[\rho''':(\tilde{Z},\calC_2)\to (Z_{min},\calB_2)\]
analytic. Indeed, as $\rho'''$ is a composition of blowing ups at points, by induction, it is sufficient to consider the
case in which $\rho'''$ is a single blow up at a 
point $p$. The topological space obtained by blowing up a surface at a smooth point does not depend on the complex structure of the surface. Thus, the
complex structure on the blow up of $(Z_{min},\calB_2)$ at $p$ is the required complex structure at $\tilde{Z}$.

\textsc{Step 4}. The mapping
\[\rho':(\tilde{Z},\calC_2)\to (Z',\calA_2)\]
is analytic for being a composition of analytic mappings.

By Stein factorisation the composition
\[\pi\comp\eta\comp\tilde{\psi}'\comp\rho':(\tilde{Z},\calC_2)\to (X,O)\]
factors into $\phi\comp\sigma$ where
\[\phi:(Z_2,Q_2)\to (X,O)\]
is a finite analytic mapping of normal surface singularities, and 
\[\sigma:(\tilde{Z},\calC_2)\to (Z_2,Q_2)\]
is a resolution of singularities. From the topological viewpoint the mapping $\sigma$ consists in collapsing the exceptional divisor $F$ to a point.
Hence, the singularity $(Z_2,Q_2)$ is topologically equivalent to $(Z,Q)$: the germ $(Z_2,Q_2)$
may be viewed as the germ of topological space $(Z,Q)$ with a different analytic structure, and the mapping $\sigma$ coincides with $\rho$. By construction the mapping $\phi$ is equal to the composition $\nu\comp\psi$. 
This ends the proof of the Lemma.
\end{proof}

Denote by $\calD_1$ and $\calD_2$ the analytic structures of $(Z,Q)$ and $(Z_2,Q_2)$. We denote 
the germs $(Z,Q)$ and $(Z_2,Q_2)$ respectively by $(Z,Q,\calD_1)$ and $(Z,Q,\calD_2)$. The
mapping $\sigma$ coincides then with
\begin{equation}
\label{resrejdt}
\rho:(\tilde{Z},\calC_2)\to (Z,Q,\calD_2).
\end{equation}

\textbf{Part III}.
\begin{lema}
\label{comproblr}
The finite analytic morphism
$$\nu\comp\psi:(Z,Q,\calD_2)\to (X,O)$$
realises an adjacency from $E_u$ to $\gamma'$ and
avoids $\Delta_{E_u}$ if $\psi$ avoids $\Delta_{E_u}$.
\end{lema}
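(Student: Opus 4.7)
The plan is to verify that the very same data $(\rho,\varphi,\rho_1,\rho_2,F_0,L,Q',\beta)$ witnessing that $\psi$ realises an adjacency from $E_u$ to $\gamma$, reinterpreted on the same underlying topological spaces but now carrying the complex structures $\calC_2$ on $\tilde Z$ and $\calD_2$ on $Z$ produced by Lemma~\ref{newcompstr}, witnesses that $\nu\comp\psi$ realises an adjacency from $E_u$ to $\gamma'$ (and avoids $\Delta_{E_u}$ whenever $\psi$ does). Lemma~\ref{newcompstr} already supplies the analytic resolution $\rho:(\tilde Z,\calC_2)\to(Z,Q,\calD_2)$ together with the analytic lifting $\eta\comp\tilde\psi:(\tilde Z,\calC_2)\to\tilde X$, so the remaining verification is largely a mixture of topological bookkeeping and a localisation argument near $\varphi$.

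First I would check the identity $(\nu\comp\psi)\comp\rho\comp\varphi=\gamma'$: using $\pi\comp\eta=\nu\comp\pi$ and Lemma~\ref{difeolr}~(iii), one gets $\nu\comp\gamma=\pi\comp\eta\comp\tilde\gamma=\pi\comp\tilde\gamma'=\gamma'$, whence $(\nu\comp\psi)\comp\rho\comp\varphi=\nu\comp\gamma=\gamma'$. Next I would verify $(\eta\comp\tilde\psi)(F_0)\subseteq E_u$: since $u\neq v$, the component $E_u$ is contained in $\tilde X_u$, which is disjoint from the region $\calD\times D\subseteq\tilde X_v$ on which $\eta$ is non-trivial, so $\eta$ fixes $E_u$ pointwise and in particular $\eta(E_u)=E_u$. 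The factorisation $\rho=\rho_2\comp\rho_1$ remains analytic in the new structures by Step~3 of Lemma~\ref{newcompstr}, and the condition that $\rho_1$ contracts only $(-1)$-curves without collapsing $F_0$ is a topological self-intersection condition that is insensitive to the complex structure on the smooth $4$-manifold $\tilde Z$.

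The key step is to check that the local analytic data near $\varphi(0)$ and $Q'$ survives the change of structure, namely that $\varphi$ is still convergent in $(\tilde Z,\calC_2)$ and that $\rho_1\comp\varphi$ is still transverse to $L$ at $Q'$ in the complex structure on $W$ inherited from $\calC_2$. For this I would exploit that $\tilde\psi'(\varphi(0))$ lies near $\tilde\gamma(0)\in D_{\tilde\gamma}\times D$, where $\eta_1:D_{\tilde\gamma}\times D\to D_{\tilde\gamma'}\times D$ is a biholomorphism. Therefore, locally on $(\tilde\psi')^{-1}(D_{\tilde\gamma}\times D)$ the complex structure $\calA_2$ obtained as the pull-back of the standard structure on $\tilde X$ along $\eta\comp\tilde\psi'$ coincides with the pull-back along $\tilde\psi'$ alone, i.e. with $\calA_1$. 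Pulling this up through the resolution, $\calC_1$ and $\calC_2$ agree on a neighbourhood of $\varphi(0)$ in $\tilde Z$, so $\varphi$, $L$, and their transverse meeting at $Q'$ all survive unchanged as analytic data.

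For the avoidance statement, I would take the same $\beta$ and consider the new wedge $\nu\comp\psi\comp\rho_2\comp\beta$, which is analytic in the new structures by Lemma~\ref{newcompstr}. Its generic arc is $\nu$ composed with the original generic arc, and its lifting to $\tilde X$ equals $\eta$ composed with the original lifting, as one sees from $\pi\comp\eta=\nu\comp\pi$ and the uniqueness of arc lifts. Since $\psi$ avoids $\Delta_{E_u}$, the special point of the original lifting lies on $E_u$ at a smooth point of $E$; being in $\tilde X_u\subseteq\tilde X\setminus(\calD\times D)$, it is a fixed point of $\eta$, so the new lifting has the same special point and the same transverseness to $E_u$ at a smooth point of $E$, hence avoids $\Delta_{E_u}$. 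The main obstacle I anticipate is precisely the local-analytic persistence step of the previous paragraph; everything else reduces either to Lemma~\ref{newcompstr} or to the fact that $\eta$ is the identity outside $\calD\times D\subseteq\tilde X_v$ and, in particular, on all of $E_u$.
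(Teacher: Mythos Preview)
Your plan for the ``realises an adjacency'' half is essentially the paper's own argument: the resolution $\rho:(\tilde Z,\calC_2)\to(Z,Q,\calD_2)$ resolves the indeterminacy because $\pi^{-1}\comp\nu\comp\psi\comp\rho=\eta\comp\tilde\psi$, the divisor $F_0$ still lands in $E_u$ since $\eta$ fixes $E_u$, and the holomorphicity of $\varphi$ in $\calC_2$ follows from $\eta$ being biholomorphic on $D_{\tilde\gamma}\times D$. The paper argues the transversality of $\rho_1\comp\varphi$ to $L$ at $Q'$ in $(W,\calE_2)$ exactly as you sketch.

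The gap is in your avoidance argument. You propose to reuse the same $\beta$ and declare $\nu\comp\psi\comp\rho_2\comp\beta$ analytic. For this you need $\beta$ to be holomorphic into $(W,\calE_2)$, i.e.\ $\calE_1=\calE_2$ on a neighbourhood of $Q'$. Your local argument only gives $\calC_1=\calC_2$ near $\varphi(0)$ in $\tilde Z$; to push this down to $W$ you would need $\rho_1$ to be a local isomorphism at $\varphi(0)$, equivalently $\rho_1^{-1}(Q')=\{\varphi(0)\}$. But this fails: the rational map $\overline\psi:=\pi^{-1}\comp\psi\comp\rho_2$ has an indeterminacy point precisely at $Q'$ (indeed $\rho_2(Q')=Q$ and $\psi(Q)=O$), so $\rho_1^{-1}(Q')$ contains the exceptional curves that resolve this indeterminacy. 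Under $\tilde\psi$ these curves map to a connected chain in $E$ running from $\tilde\gamma(0)\in E_v$ to $E_u$, and there is no reason this chain avoids the region $(\calD\setminus(D_{\tilde\gamma}\cup D_{\tilde\gamma'}))\times D$ where $\eta$ fails to be biholomorphic. Hence $\calC_1\neq\calC_2$ on a full neighbourhood of $\rho_1^{-1}(Q')$, and you cannot conclude $\calE_1=\calE_2$ near $Q'$. Equivalently, $\nu\comp\alpha$ (with $\alpha$ the original wedge) need not be holomorphic, because $\nu$ is not holomorphic on all of $(X,O)$.

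The paper circumvents this by \emph{not} reusing $\beta$. It first picks a fresh local analytic chart $\beta:(\CC^2,O)\to(W,Q',\calE_2)$ in the new structure, and then proves a Claim: $\psi$ avoids $\Delta_{E_u}$ if and only if, in a punctured neighbourhood of $Q'$ in $W$, every real $2$-dimensional smooth germ transverse to $L$ is sent by $\overline\psi$ transversally to $E_u$ in the smooth category. This characterisation is purely $C^\infty$, depends only on $\overline\psi$ restricted to a punctured neighbourhood of $Q'$, and is manifestly preserved when $\overline\psi$ is replaced by $\eta\comp\overline\psi$ with $\eta$ a diffeomorphism. That is the missing idea you need for the avoidance half.
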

\begin{proof}
The resolution $\rho$ defined in~(\ref{resrejdt}) 
resolves the indeterminacy of $\pi^{-1}\comp\nu\comp\psi$, since, by construction, this rational map coincides with the
morphism 
$\eta\comp\tilde{\psi}=\eta\comp\tilde{\psi'}\comp\rho'$. The exceptional divisor of $\rho$ is $F$, and, since $\eta$ leaves each irreducible component of $E$
invariant, we have that $\eta\comp\tilde{\psi}(F_0)$ is a subset of $E_u$.

Let
\[\varphi:(\CC,O)\to (\tilde{Z},\calC_1)\]
be the arc predicted in Definition~\ref{branchadj}. Since $\tilde{\psi}'\comp\rho'\comp\varphi$ has image in an open set
in which $\eta$ is biholomorphic, the 
arc $\varphi$ is also holomorphic with respect to the complex structure of $\calC_2$.
By construction we have the equality
$\nu\comp\psi\comp\rho\comp\varphi=\gamma'$. Let
\[\rho_1:(\tilde{Z},\calC_1)\to (W,\calE_1)\]
be the sequence of contractions of $(-1)$-curves predicted in Definition~\ref{branchadj}. The same sequence of contractions in $(\tilde{Z},\calC_2)$ gives rise to
a unique new complex structure $\calE_2$ in $W$, which makes 
\[\rho_1:(\tilde{Z},\calC_2)\to (W,\calE_2)\]
holomorphic. By construction $\rho_1$ does not collapse $F_0$, the arc $\rho_1\comp\varphi$ sends the special point into a point $Q'$ of $F_0$ in which
$F_0$ has a smooth branch $L$ and moreover $\rho_1\comp\varphi$ is transverse to $L$ at $Q'$.
Thus the covering $\nu\comp\psi$ realises an adjacency from $E_u$ to $\gamma'$ (see Definition~\ref{branchadj}).

Let us prove now that $\psi$ avoids $\Delta_{E_u}$ if and only if $\nu\comp\psi$ also avoids it.
Let $\rho=\rho_2\comp\rho_1$ be the factorisation of $\rho$ predicted in Definition~\ref{branchadj}.
The situation of Definition~\ref{branchadj} in the complex analytic setting (Definition~\ref{branchadjC})
makes clear the existence of a small neighborhood $\Omega$ of $Q'$ in $W$ such that
the only point of indetermination of the meromorphic map $\overline{\psi}:=\pi^{-1}\comp\psi\comp\rho_2$ in $\Omega$ is $Q'$.

{\bf Claim:} we have that $\psi$ avoids $\Delta_{E_u}$ if and only if $\Omega$ can be taken small enough so that for any $Q''\in\Omega\setminus\{Q\}$ there exists a germ $(D,Q'')$ of smooth real $2$-dimensional subvariety of $W$ which is
transverse to the smooth branch $L$ at $Q'$ and such that the restriction of 
\[\overline{\psi}:(D,Q'')\to \tilde{X}\]
is transverse to $E_u$ at $Q''$ in the smooth category.

The claim implies that avoiding $\Delta_{E_u}$ only depends of the restriction of 
the lifting $\overline{\psi}$ to an open subset $\Omega'\subset W$ containing an small punctured neighborhood of 
$Q'$ in $L$. Since $\eta$ is a diffeomorphism the lifting $\overline{\nu\comp\psi}$ 
corresponding to $\nu\comp\psi$ satisfies the transverseity condition of the claim if and only if $\overline{\psi}$
satisfies it. Thus, if the claim is true we have that
$\psi$ avoids $\Delta_{E_u}$ if and only if $\nu\comp\psi$ does, and the proof of the lemma is complete.

Let us prove the claim.
Consider $(\CC^2,O)$ with local coordinates $(s,t)$. There is a local analytic diffeomorphism 
\begin{equation}
\label{cartalocal}
\beta:(\CC^2,O)\to (W,Q')
\end{equation}
such that $\beta(s,0)$ parametrises $(L,Q')$ and $\beta(0,t)$ equals $\rho_1\comp\varphi(t)$.

If $\psi$ avoids $\Delta_{E_u}$, by Definition~\ref{branchadjC} (that is Definition~\ref{branchadj} in the complex 
analytic setting), it is possible to chose $\beta$ in such a way that
for any $s_0\neq 0$ and small enough the arc $\psi\comp\rho_2\comp\beta(s_0,t)$ has a transverse lifting to $E_u$.
Since the lifting is equal to $\overline{\psi}\comp\beta(s_0,t)$ we have proved the ``only if'' part.

If there is a $2$-dimensional germ $(D,Q'')$ of smooth subvariety of $W$ which is 
transverse to the smooth branch $L$ at $Q'$ and such that the restriction of 
\[\overline{\psi}:(D,Q'')\to \tilde{X}\]
is transverse to $E_u$ at $Q''$ in the smooth category then an easy manipulation with tangent spaces and tangent maps
shows the transverseity to $E_u$ of the restriction of $\overline{\psi}$ to any other $2$-dimensional germ of smooth
subvariety of $W$ which is transverse to the smooth branch $L$ at $Q''$. 
This easily implies that for any holomorphic arc
\[\theta:(\CC,O)\to (W,Q'')\]
transverse to $L$ the lifting to $\tilde{X}$ of $\psi\comp\rho_2\comp\theta$ is transverse to $E_u$. This shows the 
``if'' part.
\end{proof}
The last Lemma completes the proof of Proposition~\ref{movinbc}.
\end{proof}

The same technique leads to the following:

\begin{prop}
\label{toptype}
Let $\kappa:\calG_1\to\calG_2$ be an isomorphism between the weighted graphs of the minimal good resolution of two surface singularities 
$(X_1,O)$ and $(X_2,O)$. Let $E_u$ and $E_v$ be two exceptional divisors of the minimal good resolution of $(X,O)$. If there is a finite analytic mapping realising
an adjacency from $E_u$ to $E_v$ (avoiding $\Delta_{E_u}$) then there is a finite analytic mapping realising an adjacency from $E_{\kappa(u)}$ to $E_{\kappa(v)}$ (avoiding $\Delta_{E_{\kappa(u)}}$) .
\end{prop}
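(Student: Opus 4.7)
The plan is to adapt the proof of Proposition~\ref{movinbc} almost verbatim, replacing the self-homeomorphism of a single resolution by a homeomorphism between the two singularities $(X_1,O)$ and $(X_2,O)$ induced by the graph isomorphism $\kappa$. The key input is that a negative definite weighted dual graph determines, via the plumbing construction, the oriented diffeomorphism type of a regular neighborhood of the exceptional divisor of the minimal good resolution, and hence also the topological type of the germ obtained by collapsing the exceptional set.

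First, I would construct a diffeomorphism $\eta:\tilde{X}_1\to \tilde{X}_2$ between tubular neighborhoods of the exceptional divisors of the minimal good resolutions of $(X_1,O)$ and $(X_2,O)$, such that $\eta(E_i)=E_{\kappa(i)}$ for every vertex $i$ of $\calG_1$. Then I would fix a transverse arc $\tilde\gamma$ to $E_v$ through a smooth point of $E$ in $\tilde{X}_1$ and a transverse arc $\tilde\gamma'$ to $E_{\kappa(v)}$ through a smooth point of $E$ in $\tilde{X}_2$, and modify $\eta$ exactly as in Lemma~\ref{difeolr} so that, on a small bi-disc chart around $\tilde\gamma(0)$, the map $\eta$ restricts to a biholomorphism onto a bi-disc chart around $\tilde\gamma'(0)$, with $\eta\circ\tilde\gamma=\tilde\gamma'$. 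As in Part~I of the proof of Proposition~\ref{movinbc}, the interpolation between the biholomorphism on the bi-discs and the plumbing diffeomorphism elsewhere is arranged so that the branching locus of the covering (defined below) remains analytic where it meets $E$.

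Next, $\eta$ descends through the resolution maps to a homeomorphism $\nu:(X_1,O)\to (X_2,O)$ of germs. Given the finite analytic morphism $\psi:(Z,Q)\to (X_1,O)$ realising an adjacency from $E_u$ to $E_v$ (avoiding $\Delta_{E_u}$), the composition $\nu\circ\psi$ is a topological branched covering of $(X_2,O)$. Applying the complex-structure transfer procedure of Lemma~\ref{newcompstr} (Steps 1--4), that is, changing the complex structure on $(Z,Q)$ outside the preimage of the branch locus to make $\nu\circ\psi$ holomorphic there, invoking the Grauert--Remmert extension theorem to fill in across the analytic branch locus, and then transporting the new structure through the minimal resolution, the blow-ups, and the Stein factorisation, one obtains a unique normal complex structure $\calD_2$ on the germ $(Z,Q)$ for which
\[
\nu\circ\psi:(Z,Q,\calD_2)\to (X_2,O)
\]
is a finite analytic morphism, together with a resolution topologically the same as the original one for $(Z,Q,\calD_1)$.

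Finally, I would verify, exactly as in Lemma~\ref{comproblr}, that this new morphism realises an adjacency from $E_{\kappa(u)}$ to $E_{\kappa(v)}$ and avoids $\Delta_{E_{\kappa(u)}}$. The distinguished component $F_0$ is transported to $E_{\kappa(u)}$ because $\eta$ takes $E_u$ to $E_{\kappa(u)}$; the smooth branch $L$ at $Q'$ and the transverse arc $\varphi$ continue to be holomorphic for $\calC_2$ because $\eta$ is biholomorphic on the chosen bi-disc; and the avoidance of $\Delta_{E_{\kappa(u)}}$ is read off from the restriction of the lifting $\overline{\nu\circ\psi}$ to a small punctured neighborhood of $Q'$ in $L$, where $\eta$ is biholomorphic, so the transverseity claim used in the proof of Lemma~\ref{comproblr} carries over unchanged. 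The main obstacle is the construction of the diffeomorphism $\eta$ with the prescribed biholomorphic behaviour on the bi-discs around $\tilde\gamma(0)$ and $\tilde\gamma'(0)$; the existence of a plumbing-level diffeomorphism realising $\kappa$ is classical, but the biholomorphic interpolation needed to keep the branching of $\nu\circ\psi$ analytic in a neighborhood of $E$ is the technical heart of the argument and is carried out exactly as in Lemma~\ref{difeolr}.
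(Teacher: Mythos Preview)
Your approach is the same as the paper's, and the overall strategy is correct. One point needs to be made more precise: you say the construction of $\eta$ ``is carried out exactly as in Lemma~\ref{difeolr}'', but that lemma produces a \emph{self}-diffeomorphism which is the identity outside the region $\calD\times D$, and it is this identity that guarantees biholomorphicity near the singular points of $E$ and near every component of the branch locus $\Delta'$ of $\tilde\psi'$. When $\tilde X_1\neq\tilde X_2$ there is no identity to fall back on, so you must explicitly choose biholomorphisms from neighborhoods $U^1_j$ of each singular point of $\pi_1^{-1}(O)$ onto neighborhoods $U^2_j$ of the corresponding singular point of $\pi_2^{-1}(O)$, and from each bi-disc $D^1_i\times D$ containing a component of $\Delta'$ onto a matching bi-disc $D^2_i\times D$ in $\tilde X_2$; only then can the methods of Lemma~\ref{difeolr} be used to interpolate to a global diffeomorphism with property (ii) of the paper, namely that $\eta$ is biholomorphic on a neighborhood of $\Delta'\cup\mathrm{Sing}(\pi_1^{-1}(O))\cup\tilde\gamma(\CC,0)$. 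This is exactly what is needed for $\eta(\Delta)$ to be analytic in $\tilde X_2$ and hence for the Grauert--Remmert step to apply. With that refinement your proof matches the paper's.
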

\begin{proof}
Let $\gamma$ be a convergent $\CC$-arc in $\dot{N}_{E_v}$ such that there exists a finite analytic mapping 
\[\psi:(Z,Q)\to (X_1,O)\]
realising an adjacency from $E_u$ to $\gamma$. Let 
$\gamma'$ be any convergent $\CC$-arc in $\dot{N}_{E_{\kappa(v)}}$. Let 
\[\pi_i:\tilde{X}_i\to (X_i,O)\]
be the minimal good resolutions of $X_i$ for $i=1,2$. 
Let
\[\rho:\tilde{Z}\to (Z,Q)\]
be the resolution of singularities predicted in Definition~\ref{branchadj}, and
$\tilde{\psi}:\tilde{Z}\to\tilde{X}_1$ the lifting of $\psi$. By Stein factorisation $\tilde{\psi}$ factors into
$\tilde{\psi}'\comp\rho'$, where
\[\tilde{\psi}':Z'\to \tilde{X}_1\]
is a finite mapping (that is, a branched covering) with $Z'$ a normal surface.

Let $\Delta'$ be the union of the irreducible components of the branching locus of $\tilde{\psi}'$ not contained in the exceptional
divisor of $\pi_1$. Let $\tilde{\gamma}$ and $\tilde{\gamma}'$ be liftings of $\gamma$ and $\gamma'$ to $\tilde{X}_1$ and $\tilde{X}_2$.

Since $\tilde{X_1}$ and $\tilde{X_2}$ are diffeomorphic to the plumbing $4$-manifolds of the dual graph of the resolutions, which are isomorphic by the isomorphism $\kappa$,
possibly having to shrink them to a smaller tubular neighborhood 
of the exceptional divisor of $\pi_i$ we may assume:
\begin{enumerate}
\item For $i=1,2$ there are neighborhoods $U^i_1,...,U^i_m$ of each singular point of the exceptional divisor in
$\tilde{X}$ which, if $i=1$,
contain all the components of $\Delta'$ which meet the corresponding singular point.
\item For $i=1,2$ the difference $\tilde{X_i}\setminus \cup_{j=1}^mU_j$ splits in one connected component $\tilde{X_i}[v]$ for each
irreducible component $E^i_v$ of $\pi_i^{-1}(O)$. 
In addition, for any $v$ we have that $\hat{E}^i_v:=E^i_v\setminus\cup_{j=1}^mU^i_j$ is the
deletion of several discs in $E^i_v$, and we have a smooth
product structure $\tilde{X}^i[v]=\hat{E}^i_v\times D$, where $D$ is a disk.
\item Consider the set $\{p^1_1,...,p^1_s\}$ of meeting points of $\Delta'$ with $\pi_1^{-1}(O)$
which are not singularities of $\pi^{-1}(O)$. 
Given any point $p^1_i$ contained in a divisor $E^1_u$, there exists a disk $D^1_i$ around 
$p^1_i$ in $\hat{E}^1_u$ such that all the components of $\Delta'$ meeting $p^1_i$ are contained in $D^1_i\times D$,
and any component of $\Delta'$ meeting $D^1_i\times D$ meet $\pi^{-1}_1(O)$ at $p^1_i$.
In addition the closure of the discs $D^1_1$,...,$D^1_s$
are two-by-two disjoint and do not meet the boundary of $\hat{E}^1_v$.
\item Given any component $E^1_u$ of $\pi_1^{-1}(O)$ let $\{p^1_{i(1)},...,p^1_{i(l_u))}\}$
be the subset formed by the points in $\{p^1_1,...,p^1_s\}$ belonging to $E^1_u$. Choose points 
$$\{p^2_{i(1)},...,p^2_{i(l_u))}\}$$
 in $\hat{E}^2_\kappa(u)$ and discs $D^2_{i(1)}$,...,$D^1_{i(l_u)}$ 
with pairwise diferent closure not meeting the boundary of $\hat{E}^2_{\kappa(u)}$.
\item The images of $\tilde{\gamma}$ and $\tilde{\gamma}'$ are fibre disks in $\tilde{X^1}[v]$ and $\tilde{X^2}[{\kappa(v)}]$
by the projection to 
$\hat{E}^1_v$ and $\hat{E}^2_{\kappa(v)}$ respectively. Either there is a disk
$D_{\tilde{\gamma}}$ in $\hat{E}^1_v$ around $\tilde{\gamma}(0)$ with closure disjoint to the closure of any $D^1_i$ 
and to the boundary of $\hat{E}^1_v$, or $\tilde{\gamma}$ is the fibre of a point in $\{p^1_1,...,p^1_s\}$
by the projection to $\hat{E}^1_v$. In the later case we define the disk $D^1_{\tilde{\gamma}}$ to be the equal to $D^1_j$,
for $\tilde{\gamma}(0)=p^1_j$, we chose $p^2_j$ to be equal to $\tilde{\gamma}'(0)$, and define the 
disc $D^2_{\tilde{\gamma}}$ to be equal to $D^2_j$. Otherwise we define $D^2_{\tilde{\gamma}}$ to be a disc in 
$\hat{E}^2_\kappa(v)$ centered at $\tilde{\gamma}'(0)$ and with with closure disjoint to the closure of the $D^2_i$'s 
and disjoint to the boundary of $\hat{E}^2_{\kappa(v)}$.
\end{enumerate}

Choose biholomorphisms from $U^1_i$ to $U^2_i$ for any $i$, from $D^1_i\times D$ to $D^2_i\times D$, and from 
$D^1_{\tilde{\gamma}}\times D$ to $D^2_{\tilde{\gamma'}}\times D$ such that, using the methods of the proof of Lemma~\ref{difeolr} we may extend to a diffeomorphism
\[\eta:\tilde{X}_1\to\tilde{X_2}\]
with the following properties:
\begin{enumerate}[(i)]
\item We have $\eta(\pi_1^{-1}(0))=\pi_2^{-1}(0)$, $\eta(E_u)=E_{\kappa(u)}$, and $\eta(E_v)=E_{\kappa(v)}$.
\item the restriction of $\eta$ to a neighborhood of $\Delta'\cup Sing(\pi_1^{-1}(0))\cup \tilde{\gamma}(\CC,0)$ is biholomorphic.
\item we have $\eta\comp\tilde{\gamma}=\tilde{\gamma}'$. 
\end{enumerate}

As (by properties (i) and (ii)), the branching locus of $\eta\comp\psi$ is an analytic subset of $\tilde{X}_2$, as in the proof of Lemma~\ref{newcompstr}
we can change the complex structure in $Z'$ so that the mapping $\eta\comp\psi'$ becomes holomorphic. The construction of the branched cover realising 
an adjacency from $E_{\kappa(u)}$ to $\gamma'$ follows now word by word the analogous constructions in the proofs of 
Lemmata~\ref{newcompstr}~and~\ref{comproblr}. 
The fact that if the original finite analytic mapping avoids $\Delta_{E_u}$ then the constructed one avoids
$\Delta_{E_{\kappa(u)}}$ also follows like the analogous statement in Lemma~\ref{comproblr}.
\end{proof}

\section{On Nash and wedge problems}
\label{principal}

The following theorem contains Theorem A stated at the introduction.

\begin{theo}
\label{Nashwedge1}
Let $(X,O)$ be a normal surface singularity defined over a uncountable algebraically closed field $\KK$ of characteristic $0$.
Let $E_v$ be an essential irreducible component of the exceptional divisor of a resolution. Equivalent are:
\begin{enumerate}
\item The set $N_{E_v}$ is in the Zariski closure of $N_{E_u}$, with $E_u$ another component of the exceptional divisor.
\item Given any proper closed subset $\calZ\subset\overline{N}_{E_u}$ there exists an algebraic $\KK$-wedge realising an adjacency from $E_u$ to $E_v$ and avoiding $\calZ$.
\item There exists a formal $\KK$-wedge realising an adjacency from $E_u$ to $E_v$.
\item Given any proper closed subset $\calZ\subset\overline{N}_{E_u}$ there exists a finite morphism realising an adjacency from $E_u$ to $E_v$ and avoiding $\calZ$.
\end{enumerate}
If the base field is $\CC$ the following further conditions are equivalent to those above:
\begin{enumerate}[(a)]
\item Given {\em any} convergent arc $\gamma\in\dot{N}_{E_v}$ there exists a convergent $\CC$-wedge realising an adjacency from $E_u$ to $\gamma$ and avoiding $\Delta_{E_u}$
(that is, with generic arc tranverse to $E_u$ at an smooth point of $E$).
\item Given {\em any} convergent arc $\gamma\in\dot{N}_{E_v}$ there exists a convergent $\CC$-wedge realising an adjacency from $E_u$ to $\gamma$.
\item Given {\em any} convergent arc $\gamma\in\dot{N}_{E_v}$ there exists a finite morphism realising an adjacency from $E_u$ to $\gamma$ and avoiding $\Delta_{E_u}$.
\end{enumerate}
\end{theo}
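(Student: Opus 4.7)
The plan is to establish $(1) \Rightarrow (2) \Rightarrow (3)$, the equivalences $(2) \Leftrightarrow (4)$ and the crucial $(3) \Rightarrow (1)$, together with, in the case $\KK = \CC$, the loop $(1) \Rightarrow (c) \Rightarrow (a) \Rightarrow (b) \Rightarrow (3)$. The routine implications are straightforward assemblies of the preceding machinery: $(1) \Rightarrow (2)$ combines Proposition~\ref{especializacion} (which produces a formal $\KK$-wedge realising the adjacency and avoiding $\calZ$) with Proposition~\ref{approximation} (to algebraise it); $(2) \Rightarrow (3)$ is trivial; $(2) \Rightarrow (4)$ is Proposition~\ref{wedcov}(2); and $(4) \Rightarrow (2)$ follows by invoking Proposition~\ref{wedcov}(1) to recover a formal $\KK$-wedge, then algebraising via Proposition~\ref{approximation}. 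In the complex case, $(c) \Rightarrow (a)$ is Proposition~\ref{wedcov}(1) with $\calZ = \Delta_{E_u}$, while $(a) \Rightarrow (b) \Rightarrow (3)$ are immediate.

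For the main implication $(3) \Rightarrow (1)$ the strategy is to reduce, via the Lefschetz principle, to the complex analytic case, where the moving technology of Section~\ref{mw} applies. From a formal $\KK$-wedge realising an adjacency from $E_u$ to $E_v$, Propositions~\ref{approximation} and~\ref{wedcov}(2) produce a finite morphism of normal surface germs over $\KK$ realising an adjacency from $E_u$ to some algebraic arc $\gamma_0 \in \dot{N}_{E_v}$. Since $(X,O)$, its resolution, the components $E_u$, $E_v$ and this finite morphism involve only finitely many elements of $\KK$, the whole configuration descends to a finitely generated subfield $\KK_0 \subset \KK$; fixing an embedding $\KK_0 \hookrightarrow \CC$ and base-changing yields a complex analytic finite morphism realising an adjacency from the complex avatar $E_u^\CC$ to some convergent arc $\gamma_0^\CC \in \dot{N}_{E_v^\CC}$. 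Proposition~\ref{movinbc} then propagates the adjacency: every convergent $\gamma \in \dot{N}_{E_v^\CC}$ is the target of some finite analytic morphism realising an adjacency from $E_u^\CC$, and hence by Proposition~\ref{wedcov}(1) the special arc of a convergent $\CC$-wedge whose generic arc lies in $N_{E_u^\CC}$. The associated morphism $\mathrm{Spec}(\CC[[s]]) \to \calX_\infty^\CC$ sends the generic point into $N_{E_u^\CC}$, so its image lies in the closed set $\overline{N}_{E_u^\CC}$ and thus $\gamma \in \overline{N}_{E_u^\CC}$.

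To upgrade this pointwise containment to $\overline{N}_{E_v^\CC} \subset \overline{N}_{E_u^\CC}$ I will use Zariski density of convergent arcs in $\dot{N}_{E_v^\CC}$: since the locus $\dot{N}_{E_v^\CC}$ is controlled by finitely many jet-level equalities and inequalities by Proposition~\ref{nivelarcos}, the question of vanishing on it of a polynomial depending on finitely many coefficients reduces to a finite-dimensional jet space, in which algebraic arcs (and hence convergent ones) are classically dense by Artin-type approximation. Since $\dot{N}_{E_v^\CC}$ is Zariski open and dense in the irreducible $\overline{N}_{E_v^\CC}$ (its complement being $\Delta_{E_v^\CC}$), this yields $\overline{N}_{E_v^\CC} \subset \overline{N}_{E_u^\CC}$. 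As both closed subsets are cut out in the coordinate ring of $\calX_\infty$ by ideals already defined over $\KK_0$, faithful flatness of $\KK_0 \hookrightarrow \CC$ descends the inclusion back to $N_{E_v} \subset \overline{N}_{E_u}$, proving $(1)$. Finally, to close the complex loop $(1) \Rightarrow (c)$, applying $(1) \Rightarrow (4)$ with $\calZ = \Delta_{E_u}$ produces a finite analytic morphism realising an adjacency from $E_u$ to $E_v$ avoiding $\Delta_{E_u}$, hence to some convergent $\gamma_0 \in \dot{N}_{E_v}$, and Proposition~\ref{movinbc} in its version with avoidance propagates it to every convergent $\gamma \in \dot{N}_{E_v}$.

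The main obstacle is the Lefschetz descent in $(3) \Rightarrow (1)$: one must verify that the singularity, the good resolution, each of $N_{E_i}$, $\overline{N}_{E_i}$, $\dot{N}_{E_i}$, and the finite morphism all descend to a common finitely generated subfield $\KK_0$, and that the closed inclusion proved after base change to $\CC$ transfers back to $\KK$. A secondary delicate point is the Zariski density of convergent arcs in $\dot{N}_{E_v^\CC}$, which must be argued through the jet-space reduction rather than through any general density statement in the infinite-dimensional scheme $\calX_\infty^\CC$.
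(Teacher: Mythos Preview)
Your proof is correct and follows essentially the same strategy as the paper: the easy implications are assembled from Propositions~\ref{especializacion}, \ref{approximation}, and \ref{wedcov} exactly as you do, and the crucial implication back to (1) goes through Lefschetz descent to $\CC$, the moving technology of Section~\ref{mw}, and Zariski density of convergent arcs via Artin approximation. The paper organises the logic as $(4)\Rightarrow(1)$ rather than your $(3)\Rightarrow(1)$, but since you immediately convert (3) into a finite morphism via Propositions~\ref{approximation} and \ref{wedcov}(2), the two routes coincide; likewise the paper invokes Proposition~\ref{movinw} where you invoke Proposition~\ref{movinbc} followed by Proposition~\ref{wedcov}(1), which is precisely how \ref{movinw} is proved.

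Two points where the paper is more careful than your sketch and which you should flesh out: first, the algebraic arc $\gamma_0$ does not automatically descend to a finitely generated $\KK_0$ but only to its algebraic closure, and the paper spells out why (the power series coefficients are recovered from a factor of the defining polynomial with suitable linear part); second, the transfer of the inclusion $\overline{N}_{E_v}\subset\overline{N}_{E_u}$ requires compatibility under \emph{both} base changes $\KK_0\hookrightarrow\CC$ and $\KK_0\hookrightarrow\KK$, whereas you mention only the first. Neither is a genuine gap in the argument, but both deserve an extra sentence.
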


A component $E_v$ is in the image of the Nash map if and only if $N_{E_u}$ is {\em not} in the Zariski closure of $N_{E_u}$ for a different irreducible component $E_u$
of the exceptional divisor. The negation of the statements of the previous theorem characterises the image of the Nash map in terms of wedges defined over the base field,
and in particular gives Corollary B stated at the introduction.

\begin{proof}[Proof of Theorem~\ref{Nashwedge1}]

By Proposition~3.8~of~\cite{Re} the set $\overline{N}_{E_v}$ is generically stable, and hence,
if $N_{E_v}$ is in the Zariski closure
of $N_{E_u}$ then, given any proper closed subset $\calZ$ of $\overline{N}_{E_u}$, by Lemma~\ref{curveselectionlemma} there exists a $K$-wedge whose special arc is the generic point of $N_{E_v}$,
and whose general arc belongs to $N_{E_u}\setminus\calZ$.
Thus, by Proposition~\ref{especializacion}, there exists a formal $\KK$-wedge realising an adjacency from $E_u$ to $E_v$ and avoiding $\calZ$. By Proposition~\ref{approximation} we find an algebraic $\KK$-wedge
with the same properties. Thus (1) implies (2). 

Obviously (2) implies (3). By Proposition~\ref{wedcov} we have that (2) implies (4) and that (4) implies (3). Proposition~\ref{approximation} together with Proposition~\ref{wedcov} shows that
(3) implies (4). To prove the first set of equivalences we only need to show that (4) implies (1).

Let
\begin{equation}
\label{e0}
\psi:(Z,Q)\to (X,O)
\end{equation}
be a finite morphism realising an adjacency from $E_u$ to $E_v$. In order to finish the proof we have to show that $N_{E_v}$ is in the Zariski closure of $N_{E_u}$.
It is well known that any morphism of algebraic varieties over a field of characteristic $0$ is defined over a subfield $\KK_1\subset\KK$ which is a finitely generated
extension of $\QQ$ (just observe that there are only finitely many coefficients involved in the equations of the varieties and of the morphism).
Explicitly this means that there is a finite
morphism
\begin{equation}
\label{e1}
\psi':(Z',Q')\to (X',O')
\end{equation}
defined over $\KK_1$ such that the morphism~(\ref{e0}) is obtained by pull-back of~(\ref{e1}) by
\begin{equation}
\label{basechange1}
Spec(\KK)\to Spec(\KK_1).
\end{equation}

Let $\gamma$ be the algebraic $\KK$-arc such that $\psi$ realises an adjacency from $E_u$ to $\gamma$. The algebraicity of $\gamma$ implies that there are polynomials
$F_1,...,F_N\in\KK[t,x_1,...,x_n]$ such that we have the equalities
\[F_i(t,x_i\comp\gamma(t))=0\]
in $\KK[[t]]$. Let $\KK_2$ be the finitely generated extension of $\KK_1$ obtained adjoining the coefficients of the polynomials $F_1,...,F_N$. Let $\overline{\KK}_2$ be the
algebraic closure of $\KK_2$. For any $i\leq N$ we define the algebraic curve $C_i\subset\overline{\KK}^2_2$ by the equation 
\[F_i(t,x)=0.\]
The formal arc $(t,x_i\comp\gamma(t))$ parametrises a smooth formal branch $L$ of $C_i$ at $(0,0)$. Thus, the polynomial $F_i$ admits a factor 
$G_i\in\overline{\KK}_2[[t,x_i]]$ whose linear part is of the form $at+bx_i$ with $b\neq 0$ such that
\[G_i(t,x_i\comp\gamma(t))=0.\]
The last equation and the form of the linear part allows to reconstruct the coefficients of $x_i\comp\gamma(t)$ and show that they belong to $\overline{\KK}_2$.
We have proved that $\gamma$ is a $\overline{\KK}_2$-arc.

Since $\KK_2$ is finitely generated over $\QQ$ it admits an embedding in $\CC$, and also its algebraic closure $\overline{\KK}_2$. We redefine $\KK_1$ to be equal to
the algebraic closure $\overline{\KK}_2$. Let
\begin{equation}
\label{res'}
\pi':\tilde{X}'\to X'
\end{equation}
be the minimally good resolution of the singularity of $X'$ at $O'$. Let 
\[E':=\cup_uE'_u\]
be a decomposition of the exceptional divisor in irreducible components. The resolution $\pi:\tilde{X}\to X$ and the divisor $E_u$ are the respective pull-backs of $\pi'$ and $E'_u$
by~(\ref{basechange1}). It is clear that the arc $\gamma$ belongs to $\dot{N}_{E_v}$ as $\KK$-arc if and only if it belongs to $\dot{N}_{E'_v}$ as $\KK_1$-arc.

On the other hand, since $E_u$ is the base change of $E'_u$ under~(\ref{basechange1}) it is cleat that $N_{E_u}$ is the base change of $N_{E'_u}$
under ~(\ref{basechange1}). Thus $N_{E_v}$ is in the Zariski closure of $N_{E_u}$ if and only if $N_{E'_v}$ is in the Zariski closure of $N_{E'_u}$.

Since $\KK_1$ is the algebraic closure of a finitely generated extension of $\QQ$, it admits an embedding into $\CC$ which gives rise to a morphism
\begin{equation}
\label{basechange2}
Spec(\CC)\to Spec(\KK_1).
\end{equation}
Let
\begin{equation}
\label{e2}
\psi'':(Z'',Q'')\to (X'',O'')
\end{equation}
and
\begin{equation}
\label{res''}
\pi'':\tilde{X}''\to X''
\end{equation}
be the base change of~(\ref{e1}) and~(\ref{res'}) by~(\ref{basechange2}). Let
\[E'':=\bigcup_u E''_u\]
be a decomposition of the exceptional divisor of $\pi''$ in irreducible components. 
As before $N_{E''_v}$ is in the Zariski closure of $N_{E''_u}$ if and only if $N_{E'_v}$ is in the Zariski closure of $N_{E'_u}$.

The mapping~(\ref{e2}) is a finite morphism defined over $\CC$ defining an adjacency from $E''_u$ to the algebraic arc $\gamma$, which, viewed as a $\CC$-arc belongs to
$\dot{N}_{E''_v}$. By Proposition~\ref{wedcov}~(1) there exists a $\CC$-wedge realising an adjacency from $E''_u$ to $\gamma$. Hence, by Proposition~\ref{movinw}, for any 
convergent $\CC$ arc $\gamma'$ in $\dot{N}_{E''_v}$ there exists a $\CC$-wedge realising an adjacency from $E_u$ to $\gamma'$.

A $\CC$-wedge realising an adjacency from $E_u$ to $\gamma'$ corresponds to a $\CC$-arc in $\calX_\infty$, taking the generic point into $N_{E''_u}$ and the special point 
$\gamma'$. Hence every convergent $\gamma'$ in $\dot{N}_{E''_v}$ belongs to the Zariski-closure of $N_{E''_u}$. As, by Artin's Approximation Theorem, any formal arc admits
a convergent approximation coinciding with the original one up to any fixed order, the set of convergent $\CC$-arcs in $\dot{N}_{E''_v}$ is Zariski dense
in $\dot{N}_{E''_v}$. As $\dot{N}_{E''_v}$ is Zariski dense in $\overline{N}_{E''_v}$ we have that $N_{E''_v}$ is in the Zariski closure of $N_{E''_u}$.

Assume now that the base field is $\CC$.
Proposition~\ref{movinw} shows that (2) imples (a), which clearly implies (b). Since the set of convergent transverse arcs theough $E_v$ is dense in $N_{E_v}$ we have that (b) imples (1).
The conditions (c) and (d) are also equivalent by a similar reasoning replacing Proposition~\ref{movinw} by Proposition~\ref{movinbc}.
\end{proof}

\begin{theo}
\label{combinatorio}
Let $\kappa:\calG_1\to\calG_2$ be an isomorphism between the weighted graphs of the minimal good resolution of two normal surface singularities $(X_1,O)$ and $(X_2,O)$ 
defined over uncountable algebraically closed fields $\KK_1$ and $\KK_2$.
Let $E_u$ and $E_v$ be two exceptional divisors of the minimal good resolution of $(X_1,O)$.
\begin{enumerate}
\item There is a finite analytic mapping realising
an adjacency from $E_u$ to $E_v$ if and only if there is a finite analytic mapping realising an adjacency from $E_{\kappa(u)}$ to $E_{\kappa(v)}$.
\item There is a $\KK_1$-wedge realising
an adjacency from $E_u$ to $E_v$ if and only if there is a $\KK_2$-wedge realising an adjacency from $E_{\kappa(u)}$ to $E_{\kappa(v)}$.
\item The set $N_{E_v}$ is in the Zariski closure of $N_{E_u}$ if and only if the set $N_{E_{\kappa(v)}}$ is in the Zariski closure of $N_{E_{\kappa(u)}}$.
\end{enumerate}
\end{theo}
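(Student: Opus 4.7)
My plan is to deduce Theorem \ref{combinatorio} from Proposition \ref{toptype} (which already handles the complex analytic case) by combining it with Theorem \ref{Nashwedge1} (to interchange the three conditions) and a Lefschetz-principle argument to reduce from arbitrary uncountable algebraically closed fields $\KK_1,\KK_2$ of characteristic zero to $\CC$.

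First I would apply Theorem \ref{Nashwedge1} to each singularity $(X_i,O)$ separately. For each of them, the existence of a finite morphism realising the adjacency, the existence of a $\KK_i$-wedge realising the adjacency, and the Zariski-closure inclusion $N_{E_v}\subset\overline{N}_{E_u}$ are all equivalent (note that the avoidance condition with $\calZ=\emptyset$ is vacuous). Consequently it is enough to show that just one of these conditions is preserved under the graph isomorphism $\kappa$; I would work with the finite-morphism formulation, since this is precisely what Proposition \ref{toptype} addresses.

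Next I would reduce to the case $\KK_1=\KK_2=\CC$ by essentially the same Lefschetz-principle bookkeeping used in the proof of Theorem \ref{Nashwedge1} when deducing $(1)$ from $(4)$. Each singularity $X_i$, its minimal good resolution $\tilde X_i\to X_i$, and the finite morphism realising the adjacency (if it exists) are defined over a finitely generated extension of $\QQ$, whose algebraic closure embeds into $\CC$. The weighted dual graph is a combinatorial invariant of the resolution, so the isomorphism $\kappa$ survives this base change. Moreover the existence of a finite morphism realising an adjacency from $E_u$ to $E_v$ is preserved in both directions by base change between algebraically closed fields, because every piece of data involved (the morphism, its source, the minimal good resolution, and the algebraic arc verifying the adjacency) is of finite type and descends to a common finitely generated subfield.

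Once both sides have been placed over $\CC$, Proposition \ref{toptype} applied to $\kappa$ gives one implication at the level of finite analytic morphisms realising adjacencies, and the same proposition applied to $\kappa^{-1}$ gives the converse. Transporting the conclusion back through the Lefschetz bridge established above, and then reapplying Theorem \ref{Nashwedge1} on each side, yields all three statements (1), (2), (3). The main subtlety is to resist trying to transfer the Zariski closure condition (3) directly through the Lefschetz reduction: the arc space $\calX_\infty$ is not of finite type over the base field, so its behavior under base change is delicate. Routing the entire descent through the equivalent finite-morphism formulation, where everything is of finite type, sidesteps this obstacle and leaves only the invocation of Theorem \ref{Nashwedge1} at the end to translate back.
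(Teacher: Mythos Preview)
Your proposal is correct and follows essentially the same route as the paper: invoke Theorem~\ref{Nashwedge1} to make the three conditions interchangeable on each side, reduce to $\KK_1=\KK_2=\CC$ via the Lefschetz-type descent already carried out in the proof of Theorem~\ref{Nashwedge1}, and then appeal to Proposition~\ref{toptype}. Your additional remark about routing the base-change step through the finite-morphism formulation rather than the arc-space inclusion is a sensible elaboration of what the paper's phrase ``as in the proof of Theorem~\ref{Nashwedge1}'' is pointing to.
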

\begin{proof}
By Theorem~\ref{Nashwedge1} the three statements are equivalent. As in the proof of Theorem~\ref{Nashwedge1} we reduce the first statement to the case in which 
$\KK_1=\KK_2=\CC$. Then the Theorem follows from Proposition~\ref{toptype}.
\end{proof}

\begin{definition}
\label{adjgraph}
Let $(X,O)$ be a normal surface singularity defined over an algebraically closed field $\KK$.
The arc-adjacency graph of $(X,O)$ for a resolution is the directed graph whose vertices
correspond in a bijective manner with the irreducible components of the exceptional divisor of the resolution of $(X,O)$ and has an arrow from the vertex corresponding to
$E_u$ to the vertex corresponding to $E_v$ if and only if $N_{E_v}$ is in the Zariski closure of $N_{E_u}$.
An arrow is called {\em trivial} if a sequence of contractions of rational curves with self-intersection $-1$ collapses $E_v$ into $E_u$ (in this case it is clear that 
 $N_{E_v}$ is in the Zariski closure of $N_{E_u}$).
\end{definition}

\begin{remark}
A set $N_{E_v}$ is in the image of the Nash mapping if and only if its vertex in the adjacency graph only has incoming trivial arrows.
The Nash mapping is bijective if and only if the adjacency graph contains only trivial arrows.
\end{remark}

The previous Theorem may be read as follows:

\begin{cor}
\label{topo}
Let $(X,O)$ be a normal surface singularity defined over an algebraically closed field $\KK$ of characteristic $0$ (without the uncountability hypothesis). 
The minimal resolution graph of $(X,O)$ determines the adjacency graph of any resolution. In the case of complex analytic singularities
the topology of the abstract link determines the adjacency graph of any resolution. Hence the bijectivity of the Nash mapping is a topological property of the singularity.
\end{cor}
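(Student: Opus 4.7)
The plan is to reduce Corollary~\ref{topo} to the uncountable case already handled in Theorem~\ref{combinatorio} by a base change argument, and then invoke the classical plumbing calculus to pass from the resolution graph to the topology of the link.

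First I would remove the uncountability hypothesis. Given $(X,O)$ defined over an algebraically closed field $\KK$ of characteristic zero, all the finite data involved (defining equations of $X$, the resolution $\pi$, and the ideals cutting out each essential component $E_u$) involve only finitely many coefficients; hence they are defined over a countable algebraically closed subfield $\KK_0 \subset \KK$, obtained as the algebraic closure in $\KK$ of the finitely generated extension of $\QQ$ spanned by these coefficients. Such a $\KK_0$ embeds into $\CC$. Exactly as in the implication $(4)\Rightarrow(1)$ in the proof of Theorem~\ref{Nashwedge1}, the relation $N_{E_v} \subset \overline{N}_{E_u}$ is preserved and reflected under base change between algebraically closed fields of characteristic zero: at each truncation level the arc space is cut out by finitely many polynomials with coefficients in $\KK_0$, and Zariski closure is compatible with faithfully flat extensions of the base ring. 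Consequently the adjacency graph of $(X,O)$ coincides with that of its base change to $\CC$.

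Since $\CC$ is uncountable, Theorem~\ref{combinatorio} applies to the base change and shows that its adjacency graph depends only on the weighted dual graph of the minimal good resolution. As the minimal good resolution and its weighted graph are themselves invariant under base change between algebraically closed fields of characteristic zero, this gives the first assertion of the corollary, without any uncountability assumption on $\KK$.

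For the analytic statement, I would invoke the theorem of W.~Neumann on the plumbing calculus, which asserts that the oriented homeomorphism type of the boundary of the plumbed $4$-manifold associated to a negative-definite graph determines the graph when taken in minimal good form; consequently the oriented topological type of the abstract link of a complex analytic normal surface singularity recovers the weighted dual graph of its minimal good resolution. Composing with the combinatorial invariance just established shows that the topology of the abstract link determines the adjacency graph. The final assertion is then immediate from the remark preceding the corollary: bijectivity of the Nash map is equivalent to the absence of non-trivial arrows in the adjacency graph, and triviality is itself a combinatorial condition on the resolution graph, so bijectivity of Nash is a topological invariant. The only subtlety is in justifying the base-change stability of $N_{E_v} \subset \overline{N}_{E_u}$ cleanly enough at the level of the infinite-dimensional arc scheme, but this essentially repeats the descent argument already carried out inside the proof of Theorem~\ref{Nashwedge1}.
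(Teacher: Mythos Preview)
Your proposal is correct and follows essentially the same approach as the paper: the paper's proof simply notes that the uncountable case is Theorem~\ref{combinatorio} and that for countable $\KK$ one observes the resolution and adjacency graphs are preserved by base change. Your version is a more detailed rendition of the same idea (descending to a finitely generated subfield, then embedding in $\CC$, exactly as in the $(4)\Rightarrow(1)$ step of Theorem~\ref{Nashwedge1}), and you make explicit the appeal to Neumann's plumbing calculus for the link-topology statement, which the paper leaves implicit.
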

\begin{proof}
The result is clear if $\KK$ is uncountable. For the countable case it is enough to observe that the resolution and adjacency graphs are preserved by base change.
\end{proof}

\subsection{The problem of lifting wedges}
\label{wedgesclasico}
In the rest of the section we link our results with the problem of lifting wedges as studied in~\cite{Le},~\cite{Re}, and~\cite{LR}.
Here we work over the complex numbers. We remind some terminology:

A $K$-wedge $\alpha$ lifts to $\tilde{X}$ if the rational map $\pi^{-1}\comp\alpha$ is a morphism. A $K$-wedge is centred at the generic point of
$\overline{N}_{E_i}$ if its special arc is the generic point of $\overline{N}_{E_i}$. 
The resolution $\pi:\tilde{X}\to X$ satisfies the property of lifting wedges centred at the generic point of $\overline{N}_{E_v}$ if any such wedge lifts to $\tilde{X}$.
By Theorem~5.1~of~\cite{Re} this equivalent to the fact that $E_v$ is in the image of the Nash map.

In Proposition~2.9~of~\cite{LR} a sufficient condition for a resolution to have the property of lifting wedges centred at the generic point of $\overline{N}_{E_v}$ 
is given in the following way: it is sufficient to check that any $\KK$-wedge whose special arc is transverse to $E_i$ arc through a very dense collection of closed 
points of $E_i$ lifts to $\tilde{X}$ (a very dense set is a set which intersects any countable intersection of dense open subsets). We improve this result 
for normal complex surface singularities in the following
Theorem by proving that it is sufficient that there exists a single, convergent 
transverse arc to $E_v$ such that any $\CC$-wedge having it as special arc lifts.
A use of Lefschetz Principle as above shows that the assumption that the base field is $\CC$ is harmless.

\begin{cor}
\label{liftgeneric}
Let $(X,O)$ be a normal surface singularity defined over $\CC$. Let 
\[\pi:\tilde{X}\to X\]
be a resolution of singularities and $E_v$ any essential irreducible component of the exceptional divisor.
If there exists a convergent arc $\gamma\in\dot{N}_{E_v}$ such that
any $\KK$-wedge having $\gamma$ as special arc lifts to $\tilde{X}$, then any resolution of $\tilde{X}$ has the property of lifting wedges centred at the generic point 
of $\overline{N}_{E_v}$ (or equivalently, the component $E_v$ is in the image of the Nash map).
\end{cor}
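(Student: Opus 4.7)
The plan is to argue by contrapositive, reducing the statement to Theorem~\ref{Nashwedge1}. Suppose that $E_v$ is \emph{not} in the image of the Nash map. Then there exists a different essential component $E_u$ of the exceptional divisor such that $N_{E_v}\subset\overline{N}_{E_u}$, which is exactly condition $(1)$ of Theorem~\ref{Nashwedge1} for the pair $(E_u,E_v)$.

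Next, since the base field is $\CC$, I would invoke the complex-analytic strengthening $(a)$ of Theorem~\ref{Nashwedge1}: for the particular convergent arc $\gamma\in\dot{N}_{E_v}$ supplied by the hypothesis of the corollary, there exists a convergent $\CC$-wedge $\alpha$ whose special arc is $\gamma$ and whose generic arc lies in $N_{E_u}\setminus\Delta_{E_u}$. In particular $\alpha$ realises an adjacency from $E_u$ to $\gamma$. By Remark~\ref{lifting1}, any such wedge necessarily fails to lift to $\tilde{X}$, since the rational map $\pi^{-1}\comp\alpha$ has an indeterminacy at the origin. This contradicts the standing hypothesis of the corollary, namely that every $\CC$-wedge with special arc $\gamma$ lifts to $\tilde{X}$. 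Hence $E_v$ must be in the image of the Nash map.

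The final step is to convert ``$E_v$ is in the image of the Nash map'' into the conclusion that any resolution of $X$ has the property of lifting wedges centred at the generic point of $\overline{N}_{E_v}$. This is precisely the content of Theorem~5.1 of~\cite{Re}, which is cited earlier in Subsection~\ref{wedgesclasico}, and which produces the desired equivalence of the two formulations.

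No step is really an obstacle here: all the difficulty has already been packaged into the equivalence $(1)\Leftrightarrow(a)$ of Theorem~\ref{Nashwedge1}. The essential gain over Proposition~2.9 of~\cite{LR} is precisely that condition $(a)$ only requires testing against a \emph{single} convergent transverse arc through $E_v$ rather than a very dense family of closed points of $E_v$; once this strengthening is available the corollary is a one-line contrapositive.
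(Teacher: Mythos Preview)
Your proof is correct and follows essentially the same route as the paper: argue by contrapositive, obtain from the adjacency a $\CC$-wedge with special arc $\gamma$, and observe that this wedge cannot lift. The only cosmetic difference is that the paper cites Proposition~\ref{movinw} directly to move the wedge to $\gamma$, whereas you cite the already-packaged equivalent (condition~(a) of Theorem~\ref{Nashwedge1}); these are the same step.
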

\begin{proof}
By Theorem~\ref{Nashwedge1} the component $E_v$ is in the image of the Nash map if and only if it does not exist any formal $\CC$-wedge realising an adjacency from
$E_u$ to $E_v$, for $E_u$ a component of the exceptional divisor of $\pi$ different than $E_v$. If such a $\CC$-wedge exists, by Proposition~\ref{movinw}
there exists a $\CC$-wedge realising an adjacency from $E_u$ to $\gamma$. Since such a wedge has $\gamma$ as special arc and does not lift to $\tilde{X}$ we obtain a 
contradiction.
\end{proof}

\section{Applications}
\label{aplicaciones}

In what follows we establish a few relations between the resolution graph and the adjacency graph.
As both graphs are combinatorial we can work over $\CC$ in the proofs without loosing generality.

We start with a result for graphs with symmetries:

\begin{prop}
\label{simmetries}
If $\varphi$ is an automorphism of a graph $\calG$ not fixing an essential vertex $u$ then there is no adjacency from $u$ to $\varphi(u)$.
\end{prop}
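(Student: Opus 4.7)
The plan is to argue by contradiction: assume there is an adjacency from $E_u$ to $E_{\varphi(u)}$, i.e.\ $N_{E_{\varphi(u)}}\subseteq\overline{N}_{E_u}$. Note first that essentiality of a vertex of $\calG$ is a combinatorial property (in the surface case, it means being a vertex which is not contracted in the minimal resolution), hence $\varphi$ sends essential vertices to essential vertices and the entire orbit of $u$ under $\varphi$ consists of essential vertices.

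First I would apply Theorem~\ref{combinatorio}(3) with $\kappa:=\varphi:\calG\to\calG$. The ``only if'' direction transports the assumed inclusion under every power of $\varphi$, yielding
\[
N_{E_{\varphi^{i+1}(u)}}\subseteq\overline{N}_{E_{\varphi^i(u)}}\qquad\text{for all }i\geq 0.
\]
Since $\calG$ is finite, $\varphi$ has finite order; the orbit of $u$ has some length $k\geq 2$ (where we use the hypothesis $\varphi(u)\neq u$). Concatenating the above inclusions around the orbit gives
\[
\overline{N}_{E_u}=\overline{N}_{E_{\varphi^k(u)}}\subseteq\overline{N}_{E_{\varphi^{k-1}(u)}}\subseteq\cdots\subseteq\overline{N}_{E_{\varphi(u)}}\subseteq\overline{N}_{E_u},
\]
and all the closures in the chain coincide. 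In particular $\overline{N}_{E_u}=\overline{N}_{E_{\varphi(u)}}$.

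Second, I would derive a contradiction by comparing generic points. By Proposition~\ref{nivelarcos}(2) together with the transversality inequality~(\ref{translr}), the set $\dot{N}_{E_u}$ is an open subset of $N_{E_u}$, cut out by the openness of transversality and of avoidance of $\mathrm{Sing}(E)$. As $E_u$ is essential there exist transverse arcs through a smooth point of $E$ on $E_u$ (one can produce one from any curvette at a generic point of $E_u$), so $\dot{N}_{E_u}$ is a \emph{non-empty} open subset of the irreducible set $\overline{N}_{E_u}$, hence dense in it and containing the unique generic point $\eta_u$. The arc associated to $\eta_u$ lifts transversely to a smooth point of $E$ on $E_u$ and therefore meets no other component of $E$; in particular $\eta_u\notin N_{E_{\varphi(u)}}$. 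Symmetrically, $\eta_{\varphi(u)}\in\dot{N}_{E_{\varphi(u)}}$ and $\eta_{\varphi(u)}\notin N_{E_u}$. Hence $\eta_u\neq\eta_{\varphi(u)}$, contradicting the fact that $\overline{N}_{E_u}=\overline{N}_{E_{\varphi(u)}}$ has a unique generic point.

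The only real obstacle is the small verification in the second step that $\dot{N}_{E_u}$ is open and dense in $\overline{N}_{E_u}$, rather than just locally closed in $\calX_\infty$; this comes directly from Proposition~\ref{nivelarcos} and the existence of a single transverse curvette at a smooth point of $E$ on $E_u$. Everything else is a formal consequence of Theorem~\ref{combinatorio} and the finiteness of the automorphism group of $\calG$.
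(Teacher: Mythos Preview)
Your argument is correct and follows essentially the same route as the paper: both use Theorem~\ref{combinatorio} to propagate the assumed adjacency around the finite orbit of $u$, obtain $\overline{N}_{E_u}=\overline{N}_{E_{\varphi(u)}}$, and then derive a contradiction from the fact that $\dot{N}_{E_u}$ and $\dot{N}_{E_{\varphi(u)}}$ are disjoint nonempty open subsets of this common irreducible closure. The only noteworthy difference is in the justification of that last openness/density statement: the paper simply invokes Corollary~3.9 of~\cite{Re}, whereas you try to extract it from Proposition~\ref{nivelarcos} and the inequality~(\ref{translr}); your extraction is a little loose (Proposition~\ref{nivelarcos} gives local closedness of $\dot{N}(U,M)$ in $\calX_\infty$ for a fixed first order $M$ and a specific open $U\subset E_u$, not directly openness of $\dot{N}_{E_u}$ in $\overline{N}_{E_u}$), so it would be cleaner to cite the result from~\cite{Re} as the paper does.
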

\begin{proof}
Let $\{u=u_0,...,u_{d-1}\}$ be the orbit of $u$ by $\varphi$ (we order it so that $u_{i+1}=\varphi(u_{i})$ with the indexes taken 
modulo $d$).
Let
\[\pi:\tilde{X}\to (X,O)\]
be a resolution of a normal surface singularity with graph $\calG$. Let $E_{u_i}$ be the irreducible component of the exceptional divisor corresponding to the vertex $u_i$. As essentiality is a combinatorial
property of the graph, since $E_{u_1}$ is essential, each $E_{u_i}$ is essential.

Suppose we have an adjacency from $u_0=u$ to $u_1=\varphi(u)$. This means that we have the inclusion 
\[\overline{N}_{E_{u_0}}\supset\overline{N}_{E_{u_1}}.\]
By Theorem~\ref{combinatorio} we have an adjacency from $u_i$ to $u_{i+1}$ for any $i$ taking the indexes modulo $d$, and, hence, we have the chain of inclusions
\[\overline{N}_{E_{u_1}}\supset\overline{N}_{E_{u_2}}\supset ...\supset\overline{N}_{E_{u_0}}.\]
We conclude the equality $\overline{N}_{E_{u_0}}=\overline{N}_{E_{u_1}}$. 

On the other hand, since $E_{u_0}$ and $E_1$ are essential, and by Corollary~3.9~of~\cite{Re}, the sets $\dot{N}_{E_{u_0}}$ and $\dot{N}_{E_{u_1}}$ are Zariski open in the 
irreducible set $\overline{N}_{E_{u_0}}=\overline{N}_{E_{u_1}}$. Since $\dot{N}_{E_{u_0}}$ and $\dot{N}_{E_{u_1}}$ are disjoint we obtain a contradiction.
\end{proof}

As a curiosity we obtain an affirmative answer for Nash problem for very symmetric graphs:

\begin{cor}

If the automorphism group of a graph $\calG$ acts transitively, then its adjacency graph contains only trivial arrows. In other words, Nash mapping is bijective for singularities having this kind of resolution graph.
\end{cor}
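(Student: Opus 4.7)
The plan is to combine Proposition~\ref{simmetries} with the transitivity hypothesis, and then to invoke the remark following Definition~\ref{adjgraph} which characterises bijectivity of the Nash mapping by the absence of non-trivial arrows in the adjacency graph.

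First I would verify that every vertex of $\calG$ is essential. Essentiality of a vertex is a purely combinatorial property of the weighted graph (indeed, it is precisely this fact that is used inside the proof of Proposition~\ref{simmetries}, where each element of the orbit of an essential vertex is declared essential), and is therefore preserved by every graph automorphism. Since $\mathrm{Aut}(\calG)$ acts transitively on the vertex set, the set of essential vertices is either empty or all of it. But any resolution of a non-smooth normal surface singularity contains at least one essential exceptional divisor, so every vertex of $\calG$ is essential.

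Next, given any two distinct vertices $u\neq v$, transitivity supplies an automorphism $\varphi\in\mathrm{Aut}(\calG)$ with $\varphi(u)=v$; such a $\varphi$ does not fix $u$. Proposition~\ref{simmetries}, which is now applicable because $u$ is essential, yields that there is no adjacency from $u$ to $\varphi(u)=v$. Hence the only arrows in the adjacency graph are the loops $u\to u$, and these are trivial by the empty sequence of $(-1)$-curve contractions. Appealing to the remark after Definition~\ref{adjgraph}, we conclude that the Nash mapping is bijective for any singularity whose resolution graph is $\calG$.

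I do not foresee any serious obstacle: once Proposition~\ref{simmetries} is in hand, the corollary follows in one line from transitivity. The only subtlety worth isolating is the preliminary observation that essentiality is combinatorial and therefore $\mathrm{Aut}(\calG)$-invariant, which is precisely what makes Proposition~\ref{simmetries} available at every vertex.
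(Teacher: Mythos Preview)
Your proof is correct and follows exactly the route the paper intends: the paper states the corollary without proof, treating it as immediate from Proposition~\ref{simmetries}, and your argument spells out precisely that one-line deduction together with the preliminary check that transitivity forces every vertex to be essential.
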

\label{simmetries2}

The following Theorem is related to results of C.~Plenat (see~\cite{Pl}, Proposition~3.1 and Corollary~3.4).

\begin{cor}
\label{comparacion1}
The following statements hold:
\begin{itemize}
\item Let $\calG_1$ be a graph contained in a negative definite weighted graph $\calG_2$. 
If the adjacency graph of $\calG_1$ has an arrow from $u$ to $v$ then the adjacency graph of $\calG_2$ has also
an arrow from $u$ to $v$.
\item Let $\calG_2$ be a graph obtained from a negative definite weighted $\calG_1$ by decreasing the self-intersection weights of vertices (the graph $\calG_2$ is automatically negative definite).
If the adjacency graph of $\calG_2$ has an arrow from $u$ to $v$ then the adjacency graph of $\calG_1$ has also an arrow from $u$ to $v$.
\end{itemize}
\end{cor}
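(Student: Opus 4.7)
The plan is to reduce both parts to the complex analytic case via Corollary~\ref{topo} (the adjacency graph depends only on the weighted dual graph) and then, using Theorem~\ref{Nashwedge1}, to transport a finite analytic morphism realising the adjacency between singularities whose resolution graphs are related as described. Accordingly, both statements amount to constructing the appropriate finite analytic morphism on the target side from one on the source side.

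For part (1), I would fix a realisation $(X_2,O_2)$ with minimal good resolution $\pi_2:\tilde{X}_2\to X_2$ of graph $\calG_2$, and denote by $E^{(1)}\subset\pi_2^{-1}(O_2)$ the union of irreducible components corresponding to vertices of $\calG_1$. The intersection form of $\calG_1$ is the principal submatrix of that of $\calG_2$ indexed by these vertices, hence it is negative definite, so by Grauert's contraction theorem there is a bimeromorphic morphism $\sigma:\tilde{X}_2\to Y$ collapsing $E^{(1)}$ to a single normal singular point $P$ and leaving the remaining exceptional curves untouched. The germ $(Y,P)$ has minimal good resolution graph $\calG_1$, so the hypothesis together with Theorem~\ref{Nashwedge1} yields a finite analytic morphism $\psi_1:(Z_1,Q_1)\to(Y,P)$ realising the adjacency from $u$ to $v$.

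Next I would transport $\psi_1$ to a morphism over $(X_2,O_2)$. Applying Stein factorisation to the lift of $\psi_1$ to a resolution of $(Y,P)$ produces a finite analytic branched cover $Z'\to U$ over a neighbourhood $U$ of $E^{(1)}$ in $\tilde{X}_2$, whose branch locus is contained in $E^{(1)}\cup\Delta'$ for some finite union $\Delta'$ of transverse arcs. Using Proposition~\ref{movinbc} to move the wedge underlying $\psi_1$, I can arrange that $\Delta'$ avoids the finitely many points where $E^{(1)}$ meets the curves of $\calG_2\setminus\calG_1$. Near each such attachment point, in local analytic coordinates $(x,y)$ with $y=0$ defining the $\calG_1$-curve and $x=0$ defining the transverse $\calG_2\setminus\calG_1$-curve, the cover takes the form $w^n=y$, which extends automatically across $\{x=0\}$. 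Patching these local extensions to the trivial cover over the complement of a smaller neighbourhood of $E^{(1)}$ produces a topological finite cover over a neighbourhood of $\pi_2^{-1}(O_2)$ whose branch locus is analytic, and the theorem of Grauert and Remmert (as in the proof of Lemma~\ref{newcompstr}) endows it with a unique compatible complex structure. Composing with $\pi_2$ and applying Stein factorisation once more yields the required finite analytic morphism $\psi_2:(Z_2,Q_2)\to(X_2,O_2)$. By construction all the data demanded by Definition~\ref{branchadjC}, namely the exceptional component $F_0$, the arc $\varphi$ and the transverse branch $L$, are pulled back from the corresponding data for $\psi_1$, so $\psi_2$ realises an adjacency from $E_u$ to $E_v$ in $\calG_2$.

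For part (2), I would reduce to part (1) by blow-ups. Take $(X_1,O_1)$ with minimal good resolution of graph $\calG_1$, and for each vertex $v$ let $k_v\geq 0$ be the non-negative difference between the self-intersection weights of $v$ in $\calG_1$ and in $\calG_2$. Blowing up $k_v$ general smooth points on each corresponding exceptional curve produces a non-minimal good resolution whose graph $\calG_1'$ is obtained from $\calG_1$ by replacing each self-intersection by its $\calG_2$-value and attaching $k_v$ pendant $(-1)$-vertices to the vertex $v$. Hence $\calG_2$ is the induced subgraph of $\calG_1'$ on the vertices of $\calG_1$, and part (1) applied to $\calG_2\subset\calG_1'$ gives an arrow from $u$ to $v$ in the adjacency graph of $\calG_1'$. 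Since $\calG_1'$ corresponds to another good resolution of the same $(X_1,O_1)$, and the sets $N_{E_u}$, $N_{E_v}$ depend only on the divisorial valuations attached to $u$ and $v$ (they are intrinsic, and their birational transforms agree across resolutions), the same arrow exists in the adjacency graph of $\calG_1$. The main technical obstacle is the extension step in part (1): once Proposition~\ref{movinbc} has moved the transverse branching of $\psi_1$ away from the attachment points, one must verify that the branched cover really extends analytically across the curves of $\calG_2\setminus\calG_1$; the remainder of the construction is a routine use of Grauert-Remmert and Stein factorisation, parallel to the arguments already in Lemma~\ref{newcompstr} and Proposition~\ref{toptype}.
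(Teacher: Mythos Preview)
Your reduction in part~(2) is exactly the paper's argument: attach pendant $(-1)$-vertices to pass from $\calG_2$ to a graph $\calG_1'$ that is a non-minimal resolution graph of $(X_1,O_1)$, observe $\calG_2\subset\calG_1'$, and invoke part~(1) together with the resolution-independence of the sets $N_{E_u}$.

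For part~(1), however, the paper takes a much simpler and genuinely different route, and your approach has a gap in the extension step. The paper works with \emph{wedges}, not finite morphisms. Since $\calG_1\subset\calG_2$, the plumbing manifold $\tilde{X}_1$ sits as an open subset of $\tilde{X}_2$; by Corollary~\ref{topo} one may choose the complex structure on $\tilde{X}_1$ so that this inclusion $\varphi:\tilde{X}_1\hookrightarrow\tilde{X}_2$ is holomorphic. A convergent wedge $\alpha:(\CC^2,O)\to X_1$ realising the adjacency lifts, after resolving indeterminacy via $\sigma:W\to(\CC^2,O)$, to $\tilde{\alpha}:W\to\tilde{X}_1$, and then $\beta:=\pi_2\comp\varphi\comp\tilde{\alpha}\comp\sigma^{-1}$ is immediately a wedge in $X_2$ realising the adjacency from $E_u$ to $E_v$. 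No branched-cover extension is needed at all.

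Your proposed extension of the branched cover $Z'\to U$ to a neighbourhood of the full exceptional divisor of $\pi_2$ by ``patching to the trivial cover'' does not work as stated. For the patching to make sense, the cover must already be trivial on the overlap region near $\partial U$; but that region is (homotopically) the link of $(Y,P)$, and the restriction of the cover there is the link of the finite map $\psi_1:(Z_1,Q_1)\to(Y,P)$, which is in general a nontrivial connected cover with nontrivial monodromy. The local model $w^n=y$ only describes the ramification along $E^{(1)}$ near a smooth point; it says nothing about the global monodromy around the boundary of $U$, so the gluing fails. This is precisely the difficulty the paper's wedge-based argument sidesteps: a wedge has simply connected source, so its lift to $\tilde{X}_1$ lands automatically in the open subset $\tilde{X}_1\subset\tilde{X}_2$ with no extension required.
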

\begin{proof}
Let $(X_1,O_1)$ and $(X_2,O_2)$ be normal surface singularities whose resolutions 
\[\pi_i:\tilde{X}_i\to (X_i,O_i)\]
have graphs $\calG_1$ and $\calG_2$ respectively.
Taking a small neighborhood of the exceptional divisor the manifold $\tilde{X_1}$
is obtained by plumbing according to the graph $\calG_1$ of one bundle over $\PP^1$ with
Euler class the weight of the vertex.
Since the graph $\calG_2$ contains $\calG_1$ the smooth manifold $\tilde{X}_2$ may be obtained from $\tilde{X}_1$ by adding one bundle over $\PP^1$ for each vertex of $\calG_2\setminus\calG_1$
and one plumbing identification for each vertex $e$ of $\calG_2\setminus\calG_1$. Therefore we have a smooth local diffeomorphism
\[\varphi:\tilde{X}_1\to\tilde{X}_2.\]

We choose a complex structure in $\tilde{X}_1$ such that $\varphi$ becomes holomorphic, and change the complex structure of $(X_1,O_1)$ accordingly.

By the hypothesis, Theorem~\ref{Nashwedge1}~and Corollary~\ref{topo}, there is a $\CC$-wedge in $X_1$ realising an adjacency from $E_u$ to $E_v$.
By Proposition~\ref{approximation} there exists a convergent 
$\CC$-wedge
\[\alpha:(\CC^2,O)\to X_1\]
realising the same adjacency.
Let 
\[\sigma:W\to(\CC^2,O)\]
be the chain of blow ups at points giving the minimal resolution of indeterminacy of $\pi_1^{-1}\comp\alpha$, and 
\[\tilde{\alpha}:W\to\tilde{X}_1\] 
the lifting of $\alpha$. The mapping
\[\beta:=\pi_2\comp\varphi\comp\tilde{\alpha}\comp\sigma^{-1}:(\CC^2,O)\to X_2\]
is a well defined analytic mapping and a $\CC$-wedge in $X_2$ realising an adjacency from $E_u$ to $E_v$. This proves (1).

Now we deduce (2) from (1). Construct a graph $\calG_3$ as follows: for each vertex $v$ of $\calG_2$ let $a_v$ be the
difference of the self-intersection weight of the vertex $v$ in $\calG_1$ and the self-intersection weight of the
vertex $v$ in $\calG_1$. Add $a_v$ rational vertices with self-intersection weight equal to $-1$ to $\calG_2$,
attaching them to $v$. The graph $\calG_3$ is constructed after repeating the procedure for every vertex of $\calG_2$.
The graph $\calG_1$ is obtained from $\calG_3$ by deleting the vertices of $\calG_3\setminus\calG_2$, together with 
their attaching edges, and increasing in one unit for each deleted vertex the weight of the vertex of $\calG_2$ to 
which it was attached (this is the combinatorial counterpart of the blow-down operation). Since the adjacency between
components does not depend on the chosen resolution of singularities, and the second operation corresponds to collapsing $(-1)$-smooth rational curves, given $u,v\in\calG_1$ there is an arrow from $u$ to $v$ in the arc-adjacency
graph of $\calG_1$ if and only if there is an arrow from $u$ to $v$ in the arc-adjacency
graph of $\calG_3$. As $\calG_2$ is a subgraph of $\calG_3$ (2) follows from (1).
\end{proof}

Our aim now is to improve the following result of M. Lejeune-Jalabert and A. Reguera~\cite{LR}. 

\begin{prop}[M. Lejeune-Jalabert, A. Reguera]
\label{noracionales}
The following assertions hold:
\begin{enumerate}
\item If there is an arrow from a vertex $u$ to a vertex $v$ in the arc-adjacency graph of $(X,O)$ then there is a path joining the vertices $u$ and $v$ in the minimal resolution
graph such that all the vertices appearing in the path, with the possible exception of $u$, correspond to rational irreducible components of the exceptional divisor.
In particular, if $E_v$ is not rational, it is in the image of the Nash map.
\item If the Nash mapping is bijective for all graphs containing only rational curves, then it is bijective in general.
\end{enumerate}
\end{prop}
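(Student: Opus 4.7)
For part (1), the natural approach is to translate the adjacency into a wedge, resolve its indeterminacies, and read off a rational path from the combinatorics of the exceptional tree. By Corollary~\ref{topo} the adjacency graph is combinatorial, so we reduce to $\KK=\CC$. An arrow $u\to v$ then gives, via Theorem~\ref{Nashwedge1} and Proposition~\ref{approximation}, a convergent $\CC$-wedge $\alpha:Spec(\CC[[t,s]])\to (X,O)$ realising the adjacency from $E_u$ to $E_v$. Let $\sigma:W\to Spec(\CC[[t,s]])$ be the minimal composition of point blow-ups such that $\tilde{\alpha}:=\pi^{-1}\comp\alpha\comp\sigma:W\to\tilde{X}$ is a morphism. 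The exceptional divisor $F$ of $\sigma$ is a tree of smooth rational curves; each component is either contracted by $\tilde{\alpha}$ to a point of $E$, or is mapped surjectively onto an irreducible component of $E$, in which case that component of $E$ is rational (being the image of a rational curve). Write $L$ and $M$ for the strict transforms of $\{t=0\}$ and $\{s=0\}$; since $\alpha$ is a wedge we have $\tilde{\alpha}^{-1}(E)=L\cup F$, and by the minimality of $\sigma$ the curve $M$ meets $F$ transversely at a single smooth point $p\in F$.

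The key local step is to show that $E_v$ is rational. Since $\alpha_s\in\dot{N}_{E_v}$, the lifting $\tilde{\alpha}(M)$ meets $E_v$ transversely at the smooth point $\tilde{\alpha}(p)$ of $E$. Near $p$ the support of $\tilde{\alpha}^*(E_v)$ lies on the unique component $F_p\ni p$ of $F$ (because $p\notin L$), so $\tilde{\alpha}^*(E_v)=mF_p$ locally for some $m\ge 0$; intersecting with $M$ and using transversality yields $m=\tilde{\alpha}(M)\cdot E_v=1$. Either $F_p$ dominates $E_v$ (and then $E_v$ is rational), or $F_p$ contracts to $\tilde{\alpha}(p)\in E_v$; in the contracted case, any essential component of $F$ reachable from $F_p$ through a maximal chain of contracted neighbours must have image exactly $E_v$, since the presence of any other component of $E$ through $\tilde{\alpha}(p)$ would make $\tilde{\alpha}(p)$ a singular point of $E$, contradicting transversality at a smooth point of $E$. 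Thus $E_v$ is rational. An analogous analysis at the other end of the tree singles out a component $F_0\subset F$ such that $\tilde{\alpha}(F_0)$ meets $E_u$ (obtained by specialising, as $s\to 0$, the intersections of the generic fibres of $\sigma$ with $F$). The unique reduced path in $F$ from $F_0$ to $F_p$, read through its essential components (maximal sub-chains of contracted components project to a single point of $E$ lying on the images of the two flanking essential components, so those images are equal or adjacent in $\calG$), produces a path in $\calG$ from $E_u$ to $E_v$ whose vertices, other than possibly $u$, all correspond to rational components of $E$.

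For part (2) we argue by contradiction. Assume the Nash mapping is bijective for every negative-definite weighted graph with only rational vertices, yet fails for some $(X,O)$ with graph $\calG$. Then $\calG$ carries a non-trivial arrow $u\to v$, and by~(1) the vertex $v$ is rational and joined to $u$ by a path in $\calG$ through rational vertices. Combining Theorem~\ref{combinatorio} (which identifies the adjacency graph with a combinatorial invariant of $\calG$) with Corollary~\ref{comparacion1} (which controls how adjacencies transfer under graph inclusions and changes of self-intersection weights), one replaces each non-rational vertex by a purely rational configuration that preserves both negative definiteness and the arrow $u\to v$, producing a purely rational counterexample and the required contradiction.

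The hard step is the local analysis at $p$ in part~(1): ruling out the possibility that $F_p$ contracts to a point of $E_v$ while no nearby essential component of $F$ dominates $E_v$ depends crucially on the transversality of $\alpha_s$ at a smooth point of $E$, combined with the identity $\tilde{\alpha}^{-1}(E)=L\cup F$ coming from $\alpha$ being a wedge. The combinatorial reduction in part~(2) is also subtle; Theorem~\ref{combinatorio} is what makes it workable, by allowing the argument to manipulate the abstract weighted graph rather than any specific analytic singularity.
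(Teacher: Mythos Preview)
The paper does not give its own proof of this proposition: it is stated with the attribution ``[M.~Lejeune-Jalabert, A.~Reguera]'' and cited from~\cite{LR}, then used as a black box. So there is no in-paper argument to compare against; the original proof in~\cite{LR} proceeds by showing that non-uniruled exceptional divisors lie in the image of the Nash map via direct valuation/arc-space estimates, not through the wedge machinery you invoke.

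Your approach to part~(1) is different in spirit and largely sound: pulling a convergent wedge through a sequence of point blow-ups and reading a rational chain off the exceptional tree is a natural way to exploit Theorem~\ref{Nashwedge1}. The analysis at $p$ is the right idea, and the observation that not every component of $F$ can be contracted by $\tilde{\alpha}$ (otherwise $\pi^{-1}\comp\alpha$ would extend, contradicting the adjacency) is what makes the chain non-trivial. A couple of your auxiliary claims deserve more care, e.g.\ the identity $\tilde{\alpha}^{-1}(E)=L\cup F$ assumes $\alpha^{-1}(O)=V(t)$ set-theoretically, which is not automatic for an arbitrary wedge realising the adjacency; but these are repairable details.

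Part~(2), however, has a genuine gap. You assert that ``one replaces each non-rational vertex by a purely rational configuration that preserves both negative definiteness and the arrow $u\to v$'', citing Theorem~\ref{combinatorio} and Corollary~\ref{comparacion1}. Neither result supports this. Theorem~\ref{combinatorio} says the adjacency graph is determined by the \emph{full} weighted dual graph, and in this paper the weights explicitly include the genus; it does not say adjacencies are insensitive to genus. Corollary~\ref{comparacion1} only compares graphs related by inclusion or by lowering self-intersection weights, and says nothing about altering genera. So the passage from a graph with a higher-genus vertex at $u$ to a purely rational graph carrying the same arrow is unjustified. To repair~(2) along your lines you would need an argument producing, from the wedge (or the finite morphism) realising $u\to v$, an adjacency entirely inside the rational sub-configuration singled out in~(1); this is not a formality and is not delivered by the results you cite.
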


From now on we only work with singularities having resolutions such that all the irreducible components of the exceptional divisor are rational.

Let $\calG$ be a finite weighted graph with no edges starting and ending at the same point, and such that there not 
exist two different edges connecting the same pair of points. It is clear that any normal surface singularity has a
resolution whose associated graph has these properties.

The realisation $Z(\calG)$ of $\calG$ 
is the topological space constructed as follows: take a disjoint union of $3$-dimensional balls of radius $1$
indexed over the vertices of $\calG$ and copies of the interval $[0,1]$ indexed over the edges of $\calG$. For each edge of $\calG$, identify the ends of the 
corresponding interval with points of the boundary of the balls corresponding to the vertices which are joined by the edge. Do the identifications such that no point in
the boundary of a ball is the identification point of two different edges. It is clear that the topological type of $Z(\calG)$ only depends on $\calG$. The realisation
$Z(\calG)$ has the homotopy type of a wedge of circumferences. Given a map of graphs $\alpha:\calG_1\to\calG_2$ mapping vertices to vertices and edges to edges and respecting the incidence of the edges, its realisation is a continuous map 
\[t(\alpha):Z(\calG)\to Z'(\calG)\]
mapping the ball of a vertex $v$ homeomorphically to the ball of the vertex $\alpha(v)$ and the interval of and edge $e$ homeomorphically to the interval of the edge
$\alpha(e')$. The topological conjugation class of realisation of a mapping of graphs only depends on the mapping of graphs.

A loop in $\calG$ is a sequence of vertices and edges $\{v_1,e_1,v_2,e_2...,v_r,e_r\}$ such that $v_i$ is joined by $e_i$ to $v_{i+1}$ for any $i<r$ and 
$v_r$ is joined to $v_0$ by $e_r$. Notice that a loop is determined by its set of edges $\{e_1,...,e_r\}$.
A loop in $\calG$ is simple if there are no vertices repeated in the sequence. The finiteness of $\calG$ implies that there are finitely many simple loops $l_1,...,l_k$.
Since there are no edges starting and ending at the same point each simple loop contains at least two vertices. For each simple loop $l_i$ there is a simple 
closed curve $\beta_i$ in $Z(\calG)$ (in other words, an embedding of $\SSS^1$ into $Z(\calG)$) going, through the balls and intervals corresponding to the vertices and
edges $l_i$, in the order prescribed by $l_i$. The free homotopy class of $\beta_i$ in $Z(\calG)$ is determined by $l_i$,
and the homology class of $[\beta_i]\in H_1(Z(\calG),\ZZ)$ determines $l_i$. A trivial loop is a loop inducing the trivial element in the fundamental group.

We say that a loop $l=\{e_1,e_2...,e_r\}$ contains a simple loop $l_i:=\{f_1,f_2...,f_s\}$ if:
\begin{enumerate}
\item There is an increasing function
\[\theta:\{1,...,s\}\to\{1,...,r\}\]
such that $f_i=e_{\alpha(i)}$ and for any $i\leq s$ the sub-loop 
\[\{e_{\alpha(i)},e_{\alpha(i)+1},...,e_{\alpha(i+1)}\}\] 
is trivial (for the case $i=s$ consider the indexes modulo $s$).
\item The loop $\{e_{\alpha(1)},...,e_{\alpha(r)}\}$ is not a sub-loop of a trivial sub-loop of $l$.
\end{enumerate}

Given any finite covering $\varphi:Z'\to Z(\calG)$ we associate a finite graph $\calG(Z')$ to $Z'$ as follows: a point of $Z'$ is $1$-dimensional if it has
a neighborhood of topological dimension equal to $1$. Take a vertex $v'$ for each connected component $C_{v'}$ of the complement in $Z'$ 
of the set of $1$-dimensional points (intuitively take a vertex for each ``$3$-dimensional ball''). The component $C_{v'}$ is mapped under $\varphi$ to a $3$-dimensional ball corresponding to a vertex $v$. Give to $v'$ the weight of $v$. Join two vertices 
with an edge if they are connected in $Z'$ with an path homeomorphic to $[0,1]$ such that all the points in $(0,1)$ are $1$-dimensional. It is clear that $Z'$ is 
homeomorphic to the realisation $Z(\calG(Z'))$, and that there is a map 
\begin{equation}
\label{realimap}
\alpha:\calG(Z')\to\calG
\end{equation}
whose realisation is topologically conjugate to $\varphi$. A map of graphs is a finite covering if its realisation is a finite covering.

\begin{lema}
\label{indice}
For any $M\in\NN$ there exists a finite covering $\varphi:Z'\to Z(\calG)$ such that any non-trivial loop in $\calG(Z')$ has at least $M$ vertices.
\end{lema}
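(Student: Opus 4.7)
\textbf{Proof plan for Lemma~\ref{indice}.}
The strategy is to combine the asphericity of $Z(\calG)$ with residual finiteness of finitely generated free groups. First I would observe that $Z(\calG)$ deformation retracts onto the $1$-dimensional CW-complex obtained by keeping the centres of the vertex-balls and the edge-intervals. This is a finite connected graph (in the classical topological sense), so $F:=\pi_1(Z(\calG))$ is a finitely generated free group and $Z(\calG)$ is a $K(F,1)$. In particular, a closed path in $Z(\calG)$ is null-homotopic if and only if the element of $F$ it represents is trivial, and finite coverings of $Z(\calG)$ correspond, up to conjugacy, to finite-index subgroups of $F$.

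Next, since $\calG$ has finitely many vertices and edges, there are only finitely many combinatorial loops in $\calG$ of length strictly less than $M$. Each such loop $\ell$ determines (by traversing in order the balls and intervals prescribed by $\ell$) a closed path in $Z(\calG)$, hence a well-defined conjugacy class $[g_\ell]\subset F$; by definition $\ell$ is trivial exactly when $[g_\ell]=\{1\}$. Let $g_1,\dots,g_k\in F$ be representatives of the non-trivial conjugacy classes arising in this way.

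Now I would invoke residual finiteness of finitely generated free groups: for each non-trivial $g_i\in F$ there is a finite-index normal subgroup $H_i\trianglelefteq F$ with $g_i\notin H_i$. Set $H:=\bigcap_{i=1}^k H_i$; this is still a finite-index normal subgroup of $F$, and by normality no $F$-conjugate of any $g_i$ lies in $H$. Let $\varphi:Z'\to Z(\calG)$ be the finite covering corresponding to $H$. Since $Z'$ is a covering of $Z(\calG)$, it is again a realisation of a finite weighted graph, and $\calG(Z')$ is defined together with the map $\alpha:\calG(Z')\to\calG$ of~(\ref{realimap}).

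Finally I would verify the property. Suppose $l'$ is a non-trivial loop in $\calG(Z')$ with at most $M-1$ vertices. Its realisation as a closed path in $Z'$ determines a non-trivial element $h\in\pi_1(Z')=H\le F$. Projecting through $\varphi$, the loop $\alpha(l')$ in $\calG$ has at most $M-1$ vertices and represents the same element $h$ of $F$. Hence the conjugacy class of $h$ in $F$ coincides with $[g_i]$ for some $i$, so some $F$-conjugate of $g_i$ lies in $H$, contradicting the choice of $H$. Thus every non-trivial loop in $\calG(Z')$ has at least $M$ vertices. The main technical point to nail down carefully is the dictionary between combinatorial loops in $\calG$ and conjugacy classes in the free group $F$, together with the compatibility of this dictionary under the covering $\varphi$; once that is in place, residual finiteness does the rest.
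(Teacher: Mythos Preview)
Your argument is correct, but it follows a genuinely different route from the paper's. The paper proceeds by an explicit construction: for each simple loop $l_i$ of $\calG$ it builds a degree-$M$ covering $\calG_i\to\calG$ by taking $M$ disjoint copies of $\calG$ and reattaching one edge of $l_i$ cyclically among the copies, so that any loop in $\calG_i$ whose image contains $l_i$ must in fact wind around it $M$ times. It then takes the fibre product of the coverings $\calG_1,\dots,\calG_k$ over $\calG$ and checks that any non-trivial loop upstairs projects to a loop containing some simple loop, hence has at least $M$ vertices. Your approach instead packages everything into the single observation that $\pi_1(Z(\calG))$ is a finitely generated free group, lists the finitely many short loops, and kills them all at once via residual finiteness. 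What you gain is brevity and a clean group-theoretic statement; what the paper's approach gains is that it is entirely self-contained and explicit (one sees exactly what the covering graph looks like), and it does not invoke residual finiteness as a black box. Both arguments ultimately exploit the same finiteness (only finitely many short loops to control) and both produce a normal covering, so the difference is one of style rather than strength.
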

\begin{proof}
Let $\{l_1,...,l_k\}$ be the simple loops of $\calG$.

Consider any simple loop $l_i=\{v_1,e_1,...,v_r,e_r\}$ for $i\leq k$. 
Take $M$ disjoint copies $\calG^1,...,\calG^M$ of $\calG$. Let $l_i^j=\{v^j_1,e^j_1,...,v^j_r,e^j_r\}$ the 
loop corresponding to $l_i$ in $\calG^j$. In the disjoint union 
\[\coprod_{j=1}^r\calG^j\]
we change the attaching points of the edges $e^j_r$ for any $j$ as follows: the edge $e^j_r$ joins now the vertices $v^j_r$ and $v^{j+1}_1$ if $j<M$ and 
$e^M_r$ joins $v^m_r$ and $v^1_1$. The new graph is connected and denoted by $\calG_i$
Let
\[\alpha_i:\calG_i\to\calG\]
be defined as $\alpha(v^i_j):=v_j$ and $\alpha(e^i_j)=e_j$. The realisation
\[t(\alpha_i):Z(\calG_i)\to Z(\calG)\]
is a covering of degree $M$.
Let $l:=\{v'_1,e'_1,...,v'_r,e'_r\}$ be any loop in $\calG_i$. 
By construction, if its image under $\alpha_i$ contains the simple loop $l_i$ then it contains $M$ copies of it.

For any $i$ the projection to the $i$-th factor
\[pr_i:Z':=Z(\calG_1)\times_{Z(\calG)}...\times_{Z(\calG)} Z(\calG_k)\]
of the fibre product of the coverings $t(\alpha_i)$ is a covering.
Define 
\[\varphi:Z'\to Z(\calG)\]
by $\varphi:=\alpha_1\comp pr_1=...=\alpha_k\comp pr_k$.
Let $\alpha$ and $\beta_i$ be the mappings from $\calG(Z')$ to $\calG$ and $\calG_i$ whose respective realisations are $\varphi$ and $pr_i$ for any $i\leq k$.

We claim that the finite covering
\[\alpha:\calG(Z')\to\calG\] is the one we need.

Indeed,
let $l:=\{v'_1,e'_1,...,v'_r,e'_r\}$ be any non-trivial loop in $\calG(Z')$. Its image $\alpha(l)$ is a non-trivial loop in $\calG$, since a covering induces an
injection of fundamental groups. Thus $\alpha(l)$ contains a simple loop $l_i$. As $\alpha(l)$ equals $\alpha_i(\beta_i(l))$ and $\beta_i(l)$ is a non-trivial loop
in $\calG_i$ whose image under $\alpha_i$ contains $l_i$,
the loop $\alpha(l)$ contains $M$-copies of $l_i$, and hence the loop $l$ has at least $M$-vertices.
\end{proof}

\begin{prop}
\label{noloops}
Let $\calG$ be a negative definite
weighted graph with rational vertices. If there is an adjacency from a vertex $u$ to a vertex $v$ then there exists a negative-definite weighted graph $\calG'$ with rational vertices
and without loops, two vertices $u'$ and $v'$, with an adjacency from the first to the second, and a mapping of graphs $\alpha:\calG'\to\calG$ such that
$\alpha(u')=u$ and $\alpha(v')=v$. In addition $\alpha$ can be factorised as $\alpha=\beta\comp\iota$ where $\iota$ is the inclusion of $\calG'$ in a graph $\calG''$ and
$\beta$ is a finite covering from $\calG''$ to $\calG$.
\end{prop}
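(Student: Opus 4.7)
I would first reduce to the complex analytic case: by Corollary~\ref{topo} the adjacency structure is determined combinatorially by $\calG$, so fix a complex analytic normal surface singularity $(X,O)$ whose minimal good resolution $\pi:\tilde X\to X$ has exceptional divisor $E$ with dual graph $\calG$. By Theorem~\ref{Nashwedge1} the adjacency $u\to v$ is realised by a finite analytic morphism $\psi:(Y,P)\to(X,O)$. Pick a resolution $\rho:\tilde Y\to Y$ dominating $\pi^{-1}\comp\psi$ and let $\tilde\psi:\tilde Y\to\tilde X$ be the lift. The image $C^X\subset E$ under $\tilde\psi$ of the exceptional divisor of $\rho$ is a connected finite union of components of $E$ containing $E_u$ and $E_v$; set $c=|C^X|$.

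Apply Lemma~\ref{indice} with $M>c+1$ to obtain a finite covering of graphs $\beta:\calG''\to\calG$ such that every non-trivial loop in $\calG''$ has at least $M$ vertices. The topological covering associated to $\beta$ extends from a deleted neighbourhood of $O$ to an unramified cover of a tubular neighbourhood of $E$ in $\tilde X$; applying Grauert--Remmert yields a finite analytic covering $\varphi:(Z,Q)\to(X,O)$ whose minimal good resolution has graph $\calG''$, and whose lifted morphism $\tilde\varphi:\tilde Z\to\tilde X$ is étale along $E$.

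Let $\tilde W$ be an irreducible component of the smooth fibre product $\tilde Y\times_{\tilde X}\tilde Z$ and $W$ the corresponding irreducible component of the normalisation of $Y\times_X Z$. Étaleness of $\tilde\varphi$ makes $\tilde\mu:\tilde W\to\tilde Y$ étale, so the distinguished component $F_0\subset\tilde Y$ realising the adjacency and its transverse arc $\phi$ lift along $\tilde\mu$ to $F_0^W\subset\tilde W$ and $\phi^W$ with identical transverseity data. A direct check against Definition~\ref{branchadj} shows that the finite morphism $\nu:W\to Z$ realises an adjacency from the vertex $u''\in\beta^{-1}(u)$ defined by $\tilde\nu(F_0^W)=E_{u''}$ to some $v''\in\beta^{-1}(v)$ in $(Z,Q)$; furthermore, the image $S^Z\subset\tilde Z$ of the exceptional divisor of $\tilde W\to W$ is an isomorphic lift of $C^X$ along the étale covering, whence $|S^Z|=c$ and $S^Z$ contains a path in $\calG''$ from $u''$ to $v''$.

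Choose $\calG'\subset\calG''$ to be any connected subgraph containing $S^Z$ with at most $M-1$ vertices. The large-girth property forces $\calG'$ to be free of non-trivial loops, and $\calG'$ inherits negative definiteness and rationality of vertices from $\calG$. Contracting the $\calG'$-components of $\tilde Z$ yields a normal surface $\hat Z$ with a singular point $\hat Q$ whose minimal good resolution has graph $\calG'$; the composition $\tilde W\to\tilde Z\to\hat Z$ collapses $F_0^W$ to $\hat Q$ as $u''\in\calG'$, and Stein factorisation gives a finite morphism of germs $(W',Q')\to(\hat Z,\hat Q)$ realising the adjacency $u':=u''\to v':=v''$ in $(\hat Z,\hat Q)$. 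By Corollary~\ref{topo} the abstract singularity with graph $\calG'$ enjoys the same adjacency, and setting $\iota:\calG'\hookrightarrow\calG''$ and $\alpha=\beta\comp\iota$ completes the factorisation with $\alpha(u')=u$ and $\alpha(v')=v$. The main difficulty is the adjacency lifting of paragraph~3, specifically the étaleness of $\tilde\varphi$ along $E$ (ensuring $|S^Z|=c$ independently of the covering degree), which is a distinctive feature of the graph coverings produced by Lemma~\ref{indice}.
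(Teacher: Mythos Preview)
Your approach has a genuine gap at the claim that $|S^Z|=c$. The map $\tilde\varphi|_{S^Z}:S^Z\to C^X$ is surjective (this follows from $\tilde\varphi\comp\tilde\nu=\tilde\psi\comp\tilde\mu$ and surjectivity of $\tilde\mu$), and each component of $S^Z$ is a $\PP^1$ mapping isomorphically to a component of $C^X$; but nothing forces this map to be \emph{bijective} on components. If the subgraph $\calG_{C^X}\subset\calG$ corresponding to $C^X$ contains a loop, the covering $\beta$ produced by Lemma~\ref{indice} unwinds it, and a connected piece of $\beta^{-1}(\calG_{C^X})$ will have strictly more than $c$ vertices. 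Concretely, if $F_Y$ is a chain of four rational curves mapping under $\tilde\psi$ to a triangle $E_a\!-\!E_b\!-\!E_c\!-\!E_a$ in $E$ (so $c=3$), then any lift to a non-trivial cyclic cover sends these four curves to four \emph{distinct} components of the exceptional divisor of $\tilde Z$, giving $|S^Z|=4>c$. Your choice $M>c+1$ therefore does not guarantee that $\calG'$ avoids loops.

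The honest bound is $|S^Z|\le |F_W|$, where $F_W=\tilde\mu^{-1}(F_Y)$; but $|F_W|$ grows with the degree of the \'etale cover $\tilde\mu:\tilde W\to\tilde Y$, which in turn depends on $\beta$ and hence on $M$. This circularity cannot be broken unless you know that $\tilde\psi_*:\pi_1(\tilde Y)\to\pi_1(\tilde X)$ has image contained in the subgroup determined by $\beta$, which is not automatic. The paper sidesteps this entirely by starting not from a finite morphism but from a convergent \emph{wedge} $\alpha:(\CC^2,O)\to X$ (available by Theorem~\ref{Nashwedge1} and Proposition~\ref{approximation}). The minimal resolution of indeterminacy $\sigma:W\to(\CC^2,O)$ is an iterated blow-up of $\CC^2$, so $W$ is simply connected; the map $\tilde\alpha:W\to\tilde X$ then lifts directly to $\tilde\alpha'':W\to\tilde X''$ with no fibre-product gymnastics, and the image of the exceptional divisor $F$ of $\sigma$ has at most $K=|F|$ components regardless of the degree of $\beta$. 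Choosing $M=K+1$ then finishes the argument. If you want to rescue your fibre-product strategy, you would need to replace the finite morphism $\psi$ by one whose resolved source $\tilde Y$ is simply connected --- but that is precisely what the wedge provides.
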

\begin{proof}
Let
\[\pi:\tilde{X}\to (X,O)\]
be a resolution of a complex analytic normal surface singularity with graph equal to $\calG$. Let $\pi^{-1}(0)=E=\cup_{u=1}^r E_u$ be a decomposition in irreducible
components of the exceptional divisor. There is a neighborhood of $O$ in $X$ such that $\tilde{X}$ admits a deformation retract
\[r:\tilde{X}\to E.\] We choose $X$ to be equal to this neighborhood.

By Theorem~\ref{Nashwedge1}~and Proposition~\ref{approximation} there exists a convergent $\CC$-wedge
\[\alpha:(\CC^2,O)\to X\]
realising an adjacency from $E_u$ to $E_v$. Let
\[\sigma:W\to\CC^2\]
be the composition of blow-ups at points giving the minimal resolution of indeterminacy of $\pi^{-1}\comp\alpha$. Let
\[\tilde{\alpha}:W\to\tilde{X}\]
be the lifting of $\alpha$. Let $K$ be the number of irreducible components of the exceptional divisor $F$ of $\sigma$.

By Lemma~\ref{indice} there exists a finite covering of graphs
\[\beta:\calG''\to\calG\] 
such that any loop in $\calG''$ has at least $K+1$-vertices. Now we will construct from it a finite morphism of 
singularities.

We construct a topological space $B$ as follows: take the disjoint union of a copy $B_i$ of $\PP^1$ for each irreducible component of $E_i$ of $E$, together with a 
homeomorphism $g_i:B_i\to E_i$. If two 
components $E_i$ and $E_j$ meet at at a common point $x$ in $E$ then add a copy of the interval $[0,1]$, and identify the point $0$ with $g_i^{-1}(x)$ and $1$ with 
$g_j^{-1}(x)$. The mapping
\[c:E'\to E\]
which contracts the copies of intervals to a point and whose restriction to $B_i$ coincides with $g_i$ is a homotopy equivalence.
Moreover the space $B$ is naturally included in the realisation $Z(\calG)$ (let $E'_i$ be the boundary of the $3$-ball corresponding to $E_i$), and the inclusion
\[\iota:B\to Z(\calG)\]
induces an isomorphism of fundamental groups. Therefore we have an isomorphism of fundamental groups
\[\iota_*\comp (c_*)^{-1}\comp r_*:\pi_1(\tilde{X})\to\pi_1(Z(\calG)).\]

The covering $\beta$ induces an injection of fundamental groups
\[t(\beta)_*:\pi_1(Z(\calG''))\to\pi_1(Z(\calG)).\]
Let
\[\tilde{\phi}:\tilde{X''}\to\tilde{X}\]
be the finite covering corresponding to the subgroup $(\iota_*\comp (c_*)^{-1}\comp r_*)^{-1}(\pi_1(Z(\calG''))$. We 
give to $\tilde{X''}$ the unique complex structure making $\tilde{\phi}$ holomorphic. It is easy to check that $\tilde{X''}$ is
the plumbing manifold associated to the graph $\calG''$. As a consequence $E'':=\tilde{\phi}^{-1}(E)$ is connected.

By Stein Factorisation Theorem there exists
a proper modification
\[\rho:\tilde{X''}\to X''\]
and a finite morphism
\[\phi:X''\to X,\]
with $X''$ normal such that $\phi\comp\rho=\pi\comp\tilde{\rho}$. As $(\pi\comp\tilde{\rho})^{-1}(O)=E''$ is connected,
there is a unique point $O''$ mapped to $O$ by $\phi$. Therefore $\phi$ is a finite analytic mapping of normal surface
singularities and the graph $\calG''$ is negative definite.

Since $W$ is simply connected there exists a lifting
\[\tilde{\alpha''}:W\to\tilde{X''}\]
which is holomorphic. Let $E'$ be the image of $F$ by $\tilde{\alpha''}$. Since $F$ has $K$ irreducible components, the
set $E'$ is a connected union of at most $K$ irreducible components. Let $\calG'$ be the subgraph of $\calG''$ 
corresponding to these components. Since each simple loops of $\calG''$ has at least $K+1$ vertices, the graph $\calG'$
has no loops. It is $\calG'$ is negative definite for being a subgraph of $\calG''$. Thus, by Grauert's Contraction Theorem~\cite{Gr}, there exists a birational morphism
\[\kappa:\tilde{X'}\to (X',O')\]
which contracts $E'$ giving rise to a normal surface singularity. The mapping
\[\kappa\comp\tilde{\alpha''}:(\CC^2,O)\to (X',O')\]
defines a wedge. Let $u'$ and $v'$ be the vertices of $G'$ such that the general arc of the wedge sends the special 
point into $E'_{u'}$ and the special arc rends the special point into $E'_{v'}$. Then $\kappa\comp\tilde{\alpha''}$
realises an adjacency from $E'_{u'}$ to $E'_{v'}$.
\end{proof}

Let us finish giving a name to the class of graphs to which we have reduced Nash problem.

\begin{definition}
\label{extremal}
A negative definite weighted graph is {\em extremal} if an only if it only has rational vertices, it has no loops, and if we increase the weight of any vertex the resulting graph is either not
negative definite, or the number of trivial arrows in the adjacency graph increases.
A $\QQ$-homology sphere is {\em extremal} if it is the boundary of the plumbing $4$-manifold of an extremal graph.
\end{definition}

\begin{cor}
If the Nash mapping is bijective for all singularities with resolution graph is extremal
then it is bijective in general. Equivalently, if the Nash mapping is bijective for all complex analytic normal
surface singularities having extremal $\QQ$-homology sphere links then it is bijective in general.
\end{cor}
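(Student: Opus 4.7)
I will argue by contrapositive: assuming the Nash mapping fails for some normal surface singularity $(X,O)$, I will exhibit a singularity whose minimal good resolution graph is extremal and for which the Nash mapping also fails.

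First, Proposition~\ref{noracionales}(2) reduces the problem to the case where the minimal good resolution graph $\calG$ of $(X,O)$ has only rational vertices, so the Nash failure corresponds to a non-trivial arrow $u\to v$ in the adjacency graph of $\calG$. Applying Proposition~\ref{noloops}, I obtain a negative definite weighted graph $\calG'$ with only rational vertices, no loops, together with vertices $u'$, $v'$, and an arrow $u'\to v'$ in its adjacency graph. The construction realises $\calG'$ as a subgraph of a finite covering $\calG''$ of $\calG$ with inherited self-intersections, and the arrow $u'\to v'$ comes from a genuine wedge-realised adjacency; one checks that this arrow is non-trivial, so the adjacency graph of $\calG'$ contains a non-trivial arrow.

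The central step is a weight-saturation procedure on $\calG'$. Iteratively, while some vertex $w$ admits an increase of its self-intersection by one which keeps the graph negative definite and does not strictly increase the cardinality of the set of trivial arrows in the adjacency graph, I perform such an increase. Since self-intersection weights are integers bounded above by negative-definiteness, the procedure terminates. At termination, every admissible weight increase either breaks negative-definiteness or strictly increases the number of trivial arrows, so by Definition~\ref{extremal} the terminal graph $\calG^{*}$ is extremal. Throughout the saturation, Corollary~\ref{comparacion1}(2) guarantees that every arrow of the adjacency graph is preserved, so the cardinality of the arrow set is non-decreasing; by our saturation rule the cardinality of the trivial arrows is non-increasing; hence the cardinality of the non-trivial arrows is non-decreasing. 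Having started with at least one non-trivial arrow, $\calG^{*}$ still has one, and the corresponding singularity has non-bijective Nash mapping, contradicting the hypothesis. The equivalent formulation in terms of extremal $\QQ$-homology sphere links follows from Corollary~\ref{topo}, which establishes that Nash bijectivity is a topological invariant of the link.

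The hard part will be the combinatorial bookkeeping in the saturation. A single weight increase can introduce new trivial arrows while simultaneously extinguishing old ones, so directly tracking the specific arrow $u'\to v'$ through the procedure is fragile: that arrow could in principle become trivial during some step. The key insight above sidesteps this difficulty by monitoring only the cardinalities: since the number of arrows is non-decreasing and the number of trivial arrows is non-increasing, the number of non-trivial arrows cannot drop below one, which suffices to detect Nash failure in $\calG^{*}$.
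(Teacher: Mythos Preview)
Your proof follows the same three-stage reduction as the paper (rational vertices via Proposition~\ref{noracionales}, no loops via Proposition~\ref{noloops}, extremal via Corollary~\ref{comparacion1}), and your saturation argument with the cardinality bookkeeping is a clean expansion of what the paper compresses into a single sentence.

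The one place that needs more than ``one checks'' is the non-triviality of the arrow $u'\to v'$ in $\calG'$. Your phrase ``comes from a genuine wedge-realised adjacency'' does not distinguish trivial from non-trivial arrows, since trivial arrows are also honest adjacencies. The paper handles this with a short combinatorial claim: for a finite covering of graphs $\varphi:\calG''\to\calG$, a vertex $v'\in\calG''$ collapses to $u'$ under a sequence of $(-1)$-contractions if and only if $\varphi(v')$ collapses to $\varphi(u')$ in $\calG$. One then observes that a $(-1)$-contraction sequence in the subgraph $\calG'$ lifts verbatim to $\calG''$ (weights of the contracted vertices agree at every stage, since they depend only on already-contracted neighbours, all of which lie in $\calG'$). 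Composing these two facts gives: non-trivial in $\calG$ $\Rightarrow$ non-trivial in $\calG''$ $\Rightarrow$ non-trivial in $\calG'$. Alternatively, if you arrange from the start that $\calG$ is the graph of the \emph{minimal} resolution (all weights $\leq -2$ since every divisor is rational), then $\calG''$ and hence $\calG'$ inherit weights $\leq -2$, so there are no $(-1)$-vertices and no trivial arrows at all; this bypasses the covering argument but requires checking that Proposition~\ref{noloops} applies to such a $\calG$.
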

\begin{proof}
Proposition~\ref{noracionales} reduces the the class of singularities having only rational exceptional divisors.
Proposition~\ref{noloops} allows to reduce to the class of 
singularities whose exceptional divisor is a tree of rational curves: an easy combinatorial argument shows that if $\varphi:\calG_1\to\calG_2$ is a finite covering 
of graphs a vertex $v\in\calG_1$ can be collapsed after a finite sequence of $-1$-vertices contractions if and only if and only if $\varphi(v)$ can be collapsed in 
the same way.

The reduction to the extremal case is made using Corollary~\ref{comparacion1}.
\end{proof}


\begin{thebibliography}{99}
\bibitem{Ar} M. Artin. {\em Algebraic approximation of structures over complete local rings}. Publ. Math. I.H.E.S. {\bf 36} (1969), 23-58.
\bibitem{BDLD} J. Becker, J. Denef, L. Lipshitz, L. van den Dries. {\em Ultraproducts and Approximation in Local Rings I}. Inventiones Math. \textbf{51}, (1979), 189-203. 
\bibitem{DL} J. Denef, F. Loeser. {\em Germs of arcs on singular varieties and motivic integration}. Invent. Math. {\bf 135} (1999), 201-232.
\bibitem{Ei} D. Eisenbud. {\em Commutative Algebra with a View Toward Algebraic Geometry}. Springer Graduate Texts in Mathematics \textbf{150}, (1994)
\bibitem{Go} P. Gonzalez Perez. {\em Bijectiveness of the Nash Map for Quasi-Ordinary Hypersurface Singularities}. arXiv:0705.0520. 
\bibitem{Gr} H. Grauert. {\em \"Uber Modifikationen und exzeptionelle analytische Mengen}. Math. Ann {\bf 146} (1962), 331-368.
\bibitem{GrR} H. Grauert, R. Remmert. {\em Komplexe R\"aume}. Math. Ann. {\bf 136}, (1958), (245-318).
\bibitem{Hi} H. Hironaka, H. Rossi. {\em On the equivalence of imbeddings of exceptional complex spaces.} Math. Ann. {\bf 156} (1964), 313--333.
\bibitem{Hi1} H. Hironaka. {\em Resolution of singularities of an algebraic variety over a field of characteristic zero. I, II.} Ann. of Math. (2) {\bf 79}
(1964), 109-203; ibid. (2) {\bf 79} (1964) 205-326.
\bibitem{Ha} R. Hartshorne. {\em Algebraic Geometry} Graduate Texts in Mathematics {\bf 52}, (1977).
\bibitem{IK} S. Ishii, J. Kollar. {\em The Nash problem on arc families of singularities.} Duke Math. J. {\bf 120}, no. 3, (2003), 601-620.
\bibitem{I0} S. Ishii. {\em The arc space of a toric variety.} J. of Algebra~{\bf 278}, (2004), 666-683.
\bibitem{I1} S. Ishii. {\em Arcs, valuations and the Nash map}. J. reine angew. Math, {\bf 588}, (2005), 71-92.
\bibitem{I2} S. Ishii. {\em The local Nash problem on arc families of singularities}, Ann. Inst. Fourier, Grenoble {\bf 56}, (2006), 1207-1224.
\bibitem{Le} M. Lejeune-Jalabert. {\em Arcs analytiques et resolution minimale des singularities des surfaces quasi-homogenes} Springer LNM {\bf 777}, 303-336, (1980).
\bibitem{LR} M. Lejeune-Jalabert, A. Reguera-Lopez. {\em Exceptional divisors which are not uniruled belong to the image of the Nash map}. ArXiv:08011.2421v1
\bibitem{LR1} M. Lejeune-Jalabert, A. Reguera-Lopez. {\em Arcs and wedges on sandwiched surface singularities}, Amer. J. Math. {\bf 121}, (1999) 1191-1213.
\bibitem{Ma} H. Matsumura. {\em Commutative ring theory}. Cambridge Studies in Advanced Mathematics~\textbf{8}, (1980).
\bibitem{Mo} M. Morales. {\em The Nash problem on arcs for surface singularities}. math.AG/0609629.
\bibitem{Na} J. Nash. {\em Arc structure of singularities}. A celebration of John F. Nash, Jr.  Duke Math. J. {\bf 81}, no. 1, (1995) 31-38.
\bibitem{Ne} W. Neumann. {\em A calculus for plumbing applied to the topology of complex surface singularities and degenerating complex curves.} Trans. Amer. Math. Soc. {\bf 268} (1981), no. 2, 299-344.
\bibitem{Pe} M. Pe Pereira. {\em Nash problem for quotient surface singularities}. arXiv:1011.3792v1, (2010).
\bibitem{Pet} P. Petrov. {\em Nash problem for stable toric varieties.} Math. Nachr. {\bf 282} (2009), no. 11, 1575-1583
\bibitem{Pl} C. Plenat. {\em \`A propos du probl\`eme des arcs de Nash}. Ann. Inst. Fourier (Grenoble)  {\bf 55}, no. 3, (2005), 805-823.
\bibitem{PP1} C. Plenat. P. Popescu-Pampu. {\em A class of non-rational surface singularities for which the Nash map is bijective}.  Bull. Soc. Math. France {\bf 134} no. 3, (2006), 383-394
\bibitem{PP2} C. Plenat. P. Popescu-Pampu. {\em Families of higher dimensional germs with bijective Nash map}. Kodai Math. J.  {\bf 31}, no. 2, (2008), 199-218.
\bibitem{PlSp} C. Plenat. M. Spivakovsky {\em The Nash problem of arcs and the rational double point $\mathbf{E_6}$}. arXiv:1011.2426v1, (2010). 
\bibitem{Po} D. Popescu. {\em General N\'eron desingularization and approximation}.  Nagoya Math. J.  {\bf 104}, (1986), 85-115.
\bibitem{Re} A. Reguera-Lopez. {\em A curve selection lemma in spaces of arcs and the image of the Nash map}. Compositio Math. \textbf{142} (2006), 119-130.
\bibitem{Re1} A. Reguera-Lopez. {\em Families of arcs on rational surface singularities}.  Manuscripta Math.  {\bf 88}, no. 3, (1995), 321--333.
\bibitem{Re2} A. Reguera-Lopez. {\em Image of the Nash map in terms of wedges}.  C. R. Math. Acad. Sci. Paris {\bf 338}, no. 5, (2004), 385--390.
\end{thebibliography}
\end{document}